\documentclass[twoside,11pt]{article}

\usepackage[preprint]{jmlr2e}

\usepackage{microtype}
\usepackage{graphicx}
\usepackage{bm}
\usepackage{algorithm,algpseudocode}
\usepackage{multirow}
\usepackage{amsmath}
\usepackage{subfigure}
\usepackage{amssymb}
\usepackage{booktabs} 
\usepackage{color}
\usepackage{balance}
\usepackage{float}
\usepackage{subeqnarray}
\usepackage{cases}
\usepackage{tabularx}
\usepackage{threeparttable}
\usepackage{makecell}
\usepackage{natbib}
\usepackage{hyperref}

\newtheorem{assumption}{Assumption}


\jmlrheading{1}{2000}{1-48}{4/00}{10/00}{}{}


\ShortHeadings{Moreau Envelope based Reformulation and Algorithm for Bilevel Optimization}{}
\firstpageno{1}

\begin{document}
	
	\title{ Moreau Envelope Based Difference-of-weakly-Convex Reformulation and Algorithm for Bilevel Programs}
	
		\author{\name Lucy Gao \email lucy.gao@ubc.ca \\
		\addr Department of Statistics, University of British Columbia
		\AND
		\name Jane J. Ye \email janeye@uvic.ca \\
		\addr Department of Mathematics and Statistics, University of Victoria
		\AND 
		\name  Haian Yin  \email yinha@sustech.edu.cn \\
		\addr Department of Mathematics, Southern University of Science
		and Technology\\ National Center
		for Applied Mathematics Shenzhen
		\AND 
		\name  Shangzhi Zeng  \email zengshangzhi@uvic.ca \\
		\addr Department of Mathematics and Statistics, University of Victoria
		\AND 
		\name Jin Zhang  \email zhangj9@sustech.edu.cn\\
		\addr Corresponding author\\ Department of Mathematics, Southern University of Science
		and Technology\\ National Center
		for Applied Mathematics Shenzhen\\  Peng Cheng Laboratory
	}
	

	\editor{ }

\maketitle
	
	\begin{abstract}

	Bilevel programming has emerged as a valuable tool for hyperparameter selection, a central concern in machine learning. In a recent study by Ye et al. (2023), a value function-based difference of convex algorithm was introduced to address bilevel programs. This approach proves particularly powerful when dealing with scenarios where the lower-level problem exhibits convexity in both the upper-level and lower-level variables. Examples of such scenarios include support vector machines and $\ell_1$ and $\ell_2$ regularized regression. In this paper, we significantly expand the range of applications, now requiring convexity only in the lower-level variables of the lower-level program. We present an innovative single-level difference of weakly convex reformulation based on the Moreau envelope of the lower-level problem. We further develop a sequentially convergent Inexact Proximal Difference of Weakly Convex Algorithm (iP-DwCA). To evaluate the effectiveness of the proposed iP-DwCA, we conduct numerical experiments focused on tuning hyperparameters for kernel support vector machines on simulated data.
	
 \end{abstract}
	
	\begin{keywords}
	Bilevel programming, hyperparameter selection, difference of convex, difference of weakly convex, kernel support vector machines
\end{keywords}

\section{Introduction}
A bilevel program is an optimization problem over a pair of optimization variables $(x, y)$ where $y$ is constrained to be a solution of a lower level program indexed by the value of $x$. For example, consider the following bilevel program: 
	\begin{equation*}\label{BP}
		{\rm (BP)}~~~~~~~~~~\begin{aligned}
			\min_{x\in X, y\in Y}  ~~& F(x,y) \\
			s.t. ~~~~&  y \in S(x),
		\end{aligned}
	\end{equation*}
	where $S(x) := {\arg \min}_{y \in Y} ~ \{ f(x, y) \text{ s.t. } g(x, y) \leq 0\}$ is the set of optimal solutions for the lower level program $(P_x)$:
	\[
	\begin{aligned}
		(P_x): \quad \min_{y \in Y}~&f(x,y) \\s.t.~& g(x,y) \le 0.
	\end{aligned}
	\] 
 We assume throughout this paper that $S(x)$ is nonempty for each $x\in X$.  

A typical first step in solving bilevel program is to reformulate them as a single level optimization problem. One such reformulation is the value function reformulation, which would rewrite (BP) as:  
	\begin{equation*}\label{VPnew}
		{\rm (VP)}~~~~~~~~~~\begin{aligned}
			\min_{x\in X,y\in Y}  ~~& F(x,y) \\
			s.t. ~~~~& f(x,y)-v(x)\leq 0,
			\\ & g(x,y)\leq 0,
		\end{aligned}
	\end{equation*} 	
	where $v(x):=\inf_{w\in Y}\left \{ f(x,w) \mbox{ s.t. } g(x, w)\leq 0 \right \}$
	 is the value function of the lower level program $(P_x)$. This is equivalent to $(BP)$ because by definition, $v(x) \leq f(x, y)$ for all $(x, y) \in X \times Y$ satisfying $g(x,y)\leq 0$, and $v(x) = f(x, y)$ implies that $y \in S(x)$. 
  
  This is typically a non-convex and non-smooth optimization problem, even when all of the defining functions are smooth and convex. However, 
   \citet{ye2021difference} observe that under appropriately chosen assumptions, (VP) is a difference of convex (DC) program -- i.e. each of the objective functions and constraint functions are the difference of two convex functions. The key assumptions of \cite{ye2021difference} are that: 
   (i) 
   $X$ and $Y$ are convex and (ii) $f(x, y)$ and $g(x, y)$ are convex in \emph{both} $x$ and $y$. These assumptions are needed to ensure that the value function is convex. \citet{ye2021difference} then propose a DC programming algorithm for solving (VP).

An important application of bilevel programming appears in the context of predictive modelling, where building an accurate model typically requires tuning a set of hyperparameters ($\lambda)$. Two popular general proposals for tuning hyperparameters involve attempting to solve bilevel optimization problems: sample splitting and cross-validation. The idea is to minimize the validation or cross-validation error over the choice of hyperparameters $\lambda$. In this context, the training loss function is often a convex function of the model parameters for a fixed value of the hyperparameters, which leads to a convex lower level program. However, the training loss function is rarely convex when viewed as a function of the model parameters \emph{and} the hyperparameters. 

\citet{pmlr-v162-gao22j} carefully reformulate the bilevel programming problems induced by tuning the hyperparameters of several popular prediction algorithms (e.g. elastic net and support vector machines) to avoid violating the assumption that the defining functions of the lower level program are convex in both variables. This reformulation strategy yields good theoretical and empirical results, but relies heavily on the problem structure at hand, which makes it cumbersome to generalize the convergence analysis to prediction algorithms not explicitly considered in  \citet{pmlr-v162-gao22j}. Furthermore, the reformulation strategy of \citet{pmlr-v162-gao22j} does not naturally extend to tuning the hyperparameters of certain prediction algorithms such as kernel support vector machines. 

In this paper, we aim to solve (BP) without assuming that $f(x, y)$ is convex in both $x$ and $y$. To this end, we propose an alternative to the value function reformulation that only relies on the weaker assumption that $(P_x)$ is a convex program for any $x \in X$. The key idea behind our reformulation is to replace the value function in (VP) with a Moreau envelope function \citep{moreau1965proximite, rockafellar2009variational}: a smooth approximation to the objective function of the lower level program that maintains the same minimizers when the objective function is convex.  We show that this Moreau envelope function  is  \emph{weakly} convex and Lipschitz continuous under the mild additional assumption that $f(x, y)$ is weakly convex in both $x$ and $y$. Thus, under appropriate assumptions, our Moreau envelope reformulation is a difference of weakly convex program, which can be solved carefully applying a DC programming algorithm. 

The rest of the paper is organized as follows. We first introduce the Moreau envelope reformulation (Section \ref{sec:moreau-reform}) and its properties (Section \ref{sec:moreau-prop}). Then, we design an algorithm based on the Moreau envelope reformulation (Section \ref{sec:alg}).   In Section \ref{sec:theory}, we establish conditions under which the proposed algorithm converges toward ``good" quality solutions. We provide examples of hyperparameter tuning problems that fit into our algorithmic framework in Section \ref{sec:examples}. Numerical experiments on hyperparameter selection problems are in Section \ref{sec:exp}. 

\section{Moreau envelope reformulation of bilevel programs}
\label{sec:moreau-reform} 

\subsection{Moreau envelope function and its proximal mapping}
\label{sec:moreau-def}

The goal of this paper is to find a way to solve (BP) that mainly relies on the assumption that $(P_x)$ is a convex optimization problem for any $x \in X$:
\begin{assumption}
For any $x \in X$, $f(x, \cdot)$ and $g(x, \cdot)$ are convex functions defined on the closed convex set $Y$. 
\label{a1}
\end{assumption} 
Thus, we will assume that Assumption \ref{a1} holds throughout this subsection.

Let 
\begin{equation} 
\mathcal{F}(x) := \{y \in Y: g(x, y) \leq 0\} \quad \text{ and } \quad  C := \{(x, y) \in X \times Y: y \in \mathcal{F}(x) \} \label{defFC}
\end{equation} 
respectively represent the feasible solution set for the lower level program $(P_x)$ at a given value $x \in X$, and the set of $(x, y)$ pairs such that $x \in X$ and $y$ is feasible for $(P_x)$.  

Then, for any $x \in X$, $S(x)$ is the minima of the following non-smooth convex function: 
\begin{equation}\label{unconstrainedfunc}
				 y \rightarrow f(x,y)+\delta_C(x, y),
\end{equation} where $\delta_C(\cdot, \cdot)$ denotes the indicator function on the set $C$. For any $x \in X$, we can construct a smooth approximation of the function defined in \eqref{unconstrainedfunc} via its Moreau envelope \citep{moreau1965proximite}: 
\begin{equation} 
y \rightarrow  \inf_{w \in Y} \left\{ f(x,w) + \frac{1}{2\gamma}
			\|w - y\|^2 + \delta_C(x, w) \right \}. \label{funcnew} 
\end{equation} 
Replacing the $\inf$ in \eqref{funcnew} with $\arg \min$ yields the proximal mapping of \eqref{funcnew}. The objective function in \eqref{funcnew} is coercive and strongly convex (Assumption \ref{a1}), which implies that the minimum always exists.

We define  $v_{\gamma}: X \times \mathbb{R}^m \mapsto \mathbb{R}$ as follows: 
	\begin{equation}\label{def_v1}
		\begin{aligned}
			v_{\gamma}(x,y) & :
			= \inf_{w \in Y} \left\{ f(x,w) + \frac{1}{2\gamma}
			\|w - y\|^2 + \delta_C(x, w) \right\}.
		\end{aligned}
	\end{equation}
 That is, for any $x \in X$, $v_\gamma(x, \cdot)$ is the Moreau envelope of the function that $S(x)$ minimizes. 
We further define: 
	\begin{equation}\label{def_prox}
		\begin{aligned}
			S_{\gamma}(x,y) & :=\displaystyle  \mathrm{argmin}_ {w \in Y} \left\{ f(x,w) + \frac{1}{2\gamma}\|w - y\|^2 + \delta_C(x, w) \right\}.
		\end{aligned} 
	\end{equation}
 That is, for any $x \in X$, $S_\gamma(x, \cdot)$ is the proximal mapping of the function that $S(x)$ minimizes. 

 Chapter 1.G of \citep{rockafellar2009variational} provides a detailed review of Moreau envelopes and their associated proximal mappings. 

\subsection{Moreau envelope function reformulation of (BP)}

Our key idea is to replace the value function $v(x)$ in (VP) with the Moreau envelope function in \eqref{def_v1} as follows: 
\begin{equation*}\label{general_reformulated_problem}
		{\rm (VP)}_\gamma~~~~~~~~~~\begin{aligned}
			\min_{(x,y)\in C}  ~~& F(x,y) \\
			s.t. ~~~~& f(x,y)-v_\gamma (x,y)\leq 0.
		\end{aligned}
	\end{equation*} 

We first provide some intuition for why ${\rm (VP)}_\gamma$ is equivalent to $(BP)$ under Assumption \ref{a1} for finite values of $\gamma$. Recall from Section 1 that $(VP)$ is equivalent to $(BP)$. The Moreau envelope function $v_\gamma(x, y)$ in \eqref{def_v1} simply adds a term with gradient 0 to the function defined in \eqref{unconstrainedfunc}, i.e. the objective function of the value function $v(x)$, which preserves the stationary points. Under Assumption \ref{a1}, all global minima of the function defined in \eqref{unconstrainedfunc} are stationary points, so preserving the stationary points is equivalent to preserving the infimum. 

We formalize this intuition in the statement and proof of the following theorem. 
		\begin{theorem}\label{thm_reformulate}
				Let $\gamma
				>0$. Under Assumption 1, ${\rm (VP)}_\gamma$  is equivalent to ${\rm (BP)}$.
			\end{theorem}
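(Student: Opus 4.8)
The plan is to exploit the fact that $(\mathrm{VP})_\gamma$ and $(\mathrm{BP})$ carry the \emph{same} objective $F$, so that proving their equivalence reduces to showing that their feasible sets coincide. Concretely, since the feasible set of $(\mathrm{BP})$ is $\{(x,y): x\in X,\ y\in S(x)\}$ and that of $(\mathrm{VP})_\gamma$ is $\{(x,y)\in C: f(x,y)-v_\gamma(x,y)\le 0\}$, it suffices to prove that for every $(x,y)\in C$ one has $f(x,y)-v_\gamma(x,y)\le 0$ if and only if $y\in S(x)$.

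First I would record the one-sided bound that drives the whole argument. For any $(x,y)\in C$ the choice $w=y$ is admissible in the infimum defining $v_\gamma(x,y)$ in \eqref{def_v1}, and since $\|y-y\|^2=0$ and $\delta_C(x,y)=0$, this gives $v_\gamma(x,y)\le f(x,y)$. Hence $f(x,y)-v_\gamma(x,y)\ge 0$ holds automatically on $C$, so the constraint $f(x,y)-v_\gamma(x,y)\le 0$ is equivalent to the equality $v_\gamma(x,y)=f(x,y)$. It therefore remains to establish, for $(x,y)\in C$, the equivalence $v_\gamma(x,y)=f(x,y)\iff y\in S(x)$. The easy direction is immediate: if $y\in S(x)$, then $f(x,w)\ge f(x,y)$ for every $w\in\mathcal F(x)$, so each term of the inner objective obeys $f(x,w)+\tfrac{1}{2\gamma}\|w-y\|^2+\delta_C(x,w)\ge f(x,y)$ with equality attained at $w=y$, whence the infimum equals $f(x,y)$.

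The converse, $v_\gamma(x,y)=f(x,y)\Rightarrow y\in S(x)$, is where the main work lies, and I expect it to be the principal obstacle. By Assumption \ref{a1} the inner objective $w\mapsto f(x,w)+\tfrac{1}{2\gamma}\|w-y\|^2+\delta_C(x,w)$ is strongly convex and coercive, so it admits a \emph{unique} minimizer, namely $S_\gamma(x,y)$ from \eqref{def_prox}. Since $w=y$ attains the value $f(x,y)$, which by hypothesis equals the minimal value $v_\gamma(x,y)$, uniqueness forces $S_\gamma(x,y)=y$. I then invoke the first-order optimality condition for the proximal subproblem: writing $\phi(w):=f(x,w)+\delta_C(x,w)$ (convex in $w$ under Assumption \ref{a1}, as $\mathcal F(x)$ is convex), and noting that the quadratic term is smooth so the subdifferential sum rule applies, optimality of $S_\gamma(x,y)$ reads $\tfrac{1}{\gamma}\bigl(y-S_\gamma(x,y)\bigr)\in\partial_w\phi\bigl(S_\gamma(x,y)\bigr)$. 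Substituting $S_\gamma(x,y)=y$ yields $0\in\partial_w\phi(y)$, and since $\phi$ is convex this stationarity is equivalent to global minimality of $\phi$; that is, $y$ minimizes $f(x,\cdot)$ over $\mathcal F(x)$, which is precisely $y\in S(x)$.

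In essence, the delicate step is recognizing that the value equality $v_\gamma(x,y)=f(x,y)$ says exactly that $y$ is a fixed point of the proximal map $S_\gamma(x,\cdot)$, and that a fixed point of the proximal map of a convex function is a global minimizer. The two ingredients that make this rigorous are (i) uniqueness of the inner minimizer, supplied by strong convexity, and (ii) convexity, which upgrades the stationarity condition $0\in\partial_w\phi(y)$ to global optimality without needing any constraint qualification. Both rely solely on Assumption \ref{a1} and hold for every finite $\gamma>0$, matching the hypotheses of the theorem.
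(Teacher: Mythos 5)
Your proof is correct, and its overall skeleton matches the paper's: both reduce equivalence to showing the feasible sets coincide, and for the nontrivial direction (feasibility in $(\mathrm{VP})_\gamma$ implies $y\in S(x)$) both use the same chain --- the sandwich $f(x,y)\le v_\gamma(x,y)\le f(x,y)$ forces $y$ to minimize the proximal subproblem, first-order optimality with the quadratic term having zero gradient at $w=y$ gives $0\in\partial_w(f+\delta_C)(x,y)$, and convexity upgrades this to $y\in S(x)$. Where you genuinely diverge is the converse direction: the paper argues it via subdifferential calculus (starting from the stationarity condition $0\in\partial_w(f+\delta_C)(x,y)$, adding the quadratic $h(x,w)=\frac{1}{2\gamma}\|w-y\|^2$ whose gradient vanishes at $w=y$, and then invoking convexity of the function in \eqref{func} to conclude $y\in S_\gamma(x,y)$), whereas you give a direct inequality argument: if $y\in S(x)$, then every feasible $w$ satisfies $f(x,w)+\frac{1}{2\gamma}\|w-y\|^2+\delta_C(x,w)\ge f(x,y)$, with equality at $w=y$, so $v_\gamma(x,y)=f(x,y)$ immediately. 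Your version of this direction is more elementary --- it needs no subdifferential sum rule and in fact no convexity at all, only the definition of a minimizer plus nonnegativity of the added quadratic --- while the paper's route keeps both directions symmetric by phrasing everything through stationarity conditions. A minor additional touch in your write-up is the explicit appeal to strong convexity to get uniqueness of the inner minimizer (so $S_\gamma(x,y)=\{y\}$); the paper does not need uniqueness, only membership $y\in\mathrm{argmin}$, but the fixed-point-of-the-prox-map framing you give is a clean way to see what the constraint $f(x,y)-v_\gamma(x,y)\le 0$ really encodes.
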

			\begin{proof} It suffices to show that the feasible sets of $(VP)_\gamma$ and $(BP)$ are the same. 

	Let $(x,y)$ be any feasible point of ${\rm (VP)}_\gamma$. Then we have $(x,y) \in X \times Y$, $g(x,y) \le 0$ and 
				\[
				f(x,y) \le v_\gamma(x,y) := \inf_{w \in Y} \left\{ f(x,w) +\delta_C(x,w)+ \frac{1}{2\gamma}\|w - y\|^2 ~ 
				 \right\} \le f(x,y),
				\]
				and thus $f(x,y) = v_\gamma(x,y)$ and $y \in \mathrm{argmin}_{w \in Y}\left\{ f(x,w) +\delta_C(x,w)+ \frac{1}{2\gamma}\|w - y\|^2 
				 \right\} $. Taking the convex subdifferential with respect to $w$ of the function 
     \begin{equation}\label{func}
				 w \rightarrow f(x,w)+\delta_C(x,w)+\frac{1}{2\gamma}\|w-y\|^2
				 \end{equation} at $w=y$ via subdifferential calculus yields
				\begin{equation} 0 \in \partial_w ( f + \delta_{C})(x,y) , \label{0subgrad}\end{equation}
				 where $\partial_w $ denotes the partial subdifferential with respect to variable $w$. Assumption \ref{a1} implies that the function $w \mapsto f(x, w) + \delta_C(x, w)$ is convex. Thus, \eqref{0subgrad} implies that $y \in S(x)$. We have now shown that $(x, y)$ is a feasible point of $(BP)$. 
				
				Now, let $(x,y)$ be a feasible point of problem (BP). Then  we have $(x,y) \in X \times Y$, $y \in S(x)$ and thus the stationary condition $0 \in \partial_w ( f + \delta_{C})(x,y)$ must hold.
				It follows by subdifferential calculus that
		$0 \in \partial_w( f + \delta_{C}+h)(x,y),$
		where $		h(x,w):= \frac{1}{2\gamma}\|w-y\|^2$.
Finally, the convexity of the function defined in \eqref{func}, we have $y\in S_\gamma(x,y)$ and $f(x,y) = v_\gamma(x,y)$. Therefore  $(x,y)$ is an feasible point to problem ${\rm (VP)}_\gamma$.
			\end{proof}

As $\gamma \rightarrow \infty$, the proximal term in the Moreau envelope function vanishes. Thus, to unify the value function and Moreau envelope function reformulations under a single problem class, we define 
$v_\infty(x, y) := v(x)$ and $(VP)_\infty := (VP)$.

\section{Properties of the Moreau envelope function} 
\label{sec:moreau-prop}

If Assumption 1 holds and $f(\cdot, \cdot)$ and $g(\cdot, \cdot)$ are convex functions in both variables, then the value function $v(x)$ is convex and locally Lipschitz continuous {under some mild additional assumptions}; see Lemma 3, \citealt{ye2021difference}).  Furthermore, \citet{ye2021difference} are able to perform sensitivity analysis of the value function. These properties were critical to designing an algorithm to solve (VP). In this section, we will extend these results to the Moreau envelope function $v_\gamma(x, y)$ in order to design an algorithm to solve $(VP)_\gamma$. 

\subsection{Weak convexity} 

A function $\phi(z) $
is said to be weakly convex on  a convex set $U$  if there exists a modular $\rho\geq  0$ such that the function $\phi(z)+ \frac{\rho}{2}\|z\|^2$ is convex on $U$. Equivalently, $\phi(z)$ is weakly convex on $U$ if and only if there exists  $\rho\geq 0$ such that for any points $z,z'\in U$ and $\lambda \in [0,1]$, the approximate secant inequality holds:
			$$\phi(\lambda z+(1-\lambda) z') \leq \lambda \phi(z)+(1-\lambda) \phi(z') +\frac{\rho \lambda(1-\lambda)}{2} \|z-z'\|^2.$$
We also call a function $\rho$-weakly convex if it is weakly convex with modular equal to $\rho$. The class of  $\rho$-weakly convex functions includes convex functions as a special case when $\rho=0$.

The concept of weakly convex functions, first introduced in \cite{nurminskii1973quasigradient}, extends the class of convex functions to include a large class of nonconvex functions.  In fact, any  function with a bounded Lipschitz gradient  is weakly convex. More generally,  any function of the form
			 $\phi(z)=h(c(z)),$
			 with $h$  begin convex and Lipschitz, and $c$ being a smooth map with a bounded Lipschitz Jacobian, is  weakly convex \cite[Lemma 4.2]{drusvyatskiy2019efficiency}.

We will now establish conditions under which the Moreau envelope function is weakly convex. 

\begin{assumption} 
$X, Y$ are convex, $g(x, y)$ is convex on $X\times Y$, and $f(x, y)$ is $\rho_f$-weakly convex on $C$. \label{a2new}
\end{assumption} 

\begin{theorem}\label{Thm2}
Suppose that Assumptions \ref{a1} and \ref{a2new} hold. If $\rho_f>0$ and $\gamma \in (0, 1/\rho_f)$, then the proximal value function $v_\gamma(x, y)$ is $\rho_v$-weakly convex on $X\times Y$ for $\rho_v = \frac{\rho_f}{1- \gamma \rho_f}$.
\end{theorem}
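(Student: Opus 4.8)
The plan is to verify weak convexity straight from the definition: I would show that the function
$(x,y) \mapsto v_\gamma(x,y) + \frac{\rho_v}{2}\bigl(\|x\|^2 + \|y\|^2\bigr)$ is convex on $X \times Y$ with $\rho_v = \rho_f/(1-\gamma\rho_f)$. The engine of the argument is the standard fact that partial minimization of a jointly convex function stays convex: if $G(x,y,w)$ is convex in $(x,y,w)$, then $(x,y)\mapsto \inf_w G(x,y,w)$ is convex \citep{rockafellar2009variational}. Since the added quadratic $\frac{\rho_v}{2}(\|x\|^2+\|y\|^2)$ does not involve $w$, it can be pulled inside the infimum in \eqref{def_v1}, so it suffices to prove that the resulting integrand is jointly convex in $(x,y,w)$.

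First I would record the convexity ingredients the hypotheses supply. Because $g$ is convex on $X\times Y$ with $X,Y$ convex, the set $C$ in \eqref{defFC} is convex, so $\delta_C$ is a convex function; combined with the $\rho_f$-weak convexity of $f$ on $C$, this makes the map $(x,w)\mapsto f(x,w)+\delta_C(x,w)+\frac{\rho_f}{2}\bigl(\|x\|^2+\|w\|^2\bigr)$ convex. I would then split the integrand of $\inf_w G$ into this convex piece plus a purely quadratic remainder,
\[
G(x,y,w) = \Bigl[f(x,w)+\delta_C(x,w)+\tfrac{\rho_f}{2}\|x\|^2+\tfrac{\rho_f}{2}\|w\|^2\Bigr] + Q(x,y,w),
\]
where $Q$ collects the proximal term $\frac{1}{2\gamma}\|w-y\|^2$, the added $\frac{\rho_v}{2}(\|x\|^2+\|y\|^2)$, and the subtracted $-\frac{\rho_f}{2}(\|x\|^2+\|w\|^2)$. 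Joint convexity of $G$ then reduces to convexity of the quadratic $Q$.

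The heart of the proof, and the step I expect to be the main obstacle, is pinning down exactly when $Q$ is convex. The $\|x\|^2$ contribution decouples with coefficient $\frac{\rho_v-\rho_f}{2}$, nonnegative as soon as $\rho_v\ge\rho_f$; the genuinely coupled part is the $(y,w)$ block, governed coordinatewise by
\[
M = \begin{pmatrix} \rho_v + \tfrac{1}{\gamma} & -\tfrac{1}{\gamma} \\ -\tfrac{1}{\gamma} & \tfrac{1}{\gamma}-\rho_f \end{pmatrix}.
\]
The assumption $\gamma\in(0,1/\rho_f)$ forces $\tfrac1\gamma-\rho_f>0$ and $1-\gamma\rho_f>0$; computing $\det M = \frac{\rho_v-\rho_f}{\gamma}-\rho_v\rho_f$ and requiring $\det M\ge 0$ gives precisely $\rho_v(1-\gamma\rho_f)\ge\rho_f$, i.e. $\rho_v\ge\rho_f/(1-\gamma\rho_f)$. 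At the threshold value we get $M\succeq 0$, and since $\rho_f/(1-\gamma\rho_f)\ge\rho_f$ the $\|x\|^2$ coefficient is also nonnegative, so $Q$, and hence $G$, is convex.

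Applying the partial-minimization fact to $G$ then yields convexity of $v_\gamma + \frac{\rho_v}{2}(\|x\|^2+\|y\|^2)$ on $X\times Y$, which is exactly $\rho_v$-weak convexity of $v_\gamma$. I would close by remarking that finiteness of $v_\gamma$ (needed for the statement to be meaningful) is guaranteed because, under Assumption \ref{a1}, the inner objective is strongly convex in $w$ and its minimum is attained.
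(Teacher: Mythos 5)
Your proposal is correct and follows essentially the same route as the paper's proof: both pull the quadratic $\frac{\rho_v}{2}\|(x,y)\|^2$ inside the infimum, split the integrand into the convex piece $f(x,w)+\delta_C(x,w)+\frac{\rho_f}{2}\|(x,w)\|^2$ plus the quadratic remainder $\frac{\rho_v-\rho_f}{2}\|x\|^2+\frac{1/\gamma-\rho_f}{2}\|w\|^2+\frac{1/\gamma+\rho_v}{2}\|y\|^2-\frac{1}{\gamma}\langle w,y\rangle$, and conclude via preservation of convexity under partial minimization. Your $2\times 2$ determinant condition $\det M\ge 0$ is exactly the paper's inequality $(1/\gamma-\rho_f)(\rho_v+1/\gamma)\ge 1/\gamma^2$, so this is a presentational difference only.
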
 
\begin{proof}
				First we extend the definition of proximal relaxed value function  from any element $(x,y)\in X\times \mathbb{R}^m$ to  the whole space $ \mathbb{R}^n\times \mathbb{R}^m$ as follows:
				$$v_{\gamma}(x,y) := \inf_{w \in \mathbb{R}^m}  \left\{ f(x,w) + \frac{1}{2\gamma}\|w - y\|^2 + \delta_C(x,w) \right\} \qquad \forall x\in \mathbb{R}^n, y \in \mathbb{R}^m.$$
				In order to show that  $v_{\gamma}(x,y)$ is $\rho_v$-weakly convex, it suffices to show that the convexity of the following function 
				\begin{equation}\label{phi-gamma}
				v_{\gamma}(x,y) + \frac{\rho_v}{2}\|(x,y)\|^2 = \inf_{w \in \mathbb{R}^m} 
    \phi_{\gamma, \rho_v}(x, y, w),
				\end{equation}  where 
				\[
				\begin{aligned}
				&	\phi_{\gamma, \rho_v}(x, y, w)  := \,f(x,w) + \frac{\rho_v}{2}\|(x,y)\|^2 + \frac{1}{2\gamma}\|w - y\|^2 + \delta_C(x,w)\\
    &=f(x,w) + \frac{\rho_f}{2}\|(x,w)\|^2+ \delta_C(x,w) + \frac{\rho_v - \rho_f}{2}\|x\|^2 +  \frac{1/\gamma-\rho_f}{2}\|w\|^2 + \frac{1/\gamma + \rho_v}{2}\|y\|^2  - \frac{1}{\gamma} \langle w,y\rangle.
				\end{aligned}\]   Since $f$ is $\rho_f$-weakly convex  on  the convex set $C$, the  function $$f(x,w) + \frac{\rho_f}{2}\|(x,w)\|^2 + \delta_C(x,w)$$ is convex with respect to $(x,w)$.  
				Next, when $\gamma \in (0, 1/\rho_f)$ and $\rho_v  \ge \frac{\rho_f}{1 - \gamma\rho_f} $, we have $\rho_v > \rho_f$, $(1/\gamma-\rho_f)(\rho_v+1/\gamma) \ge 1/\gamma^2$ and thus function 
				\begin{equation}\label{lem1_eq1}
					\frac{\rho_v - \rho_f}{2}\|x\|^2 +\frac{1/\gamma-\rho_f}{2}\|w\|^2 + \frac{1/\gamma + \rho_v}{2}\|y\|^2 - \frac{1}{\gamma} \langle w,y\rangle
				\end{equation}
				is convex with respect to $(x,y,w)$.
    Therefore, when $\gamma \in (0, 1/\rho_f)$ and $\rho_v  \ge \frac{\rho_f}{1 - \gamma\rho_f} $, the extended-valued function $\phi_{\gamma, \rho_v}(x, y, w)$ is convex with respect to $(x, y, w)$ on $\mathbb{R}^n \times \mathbb{R}^m \times \mathbb{R}^m$. 
				Then, the convexity of the function (\ref{phi-gamma})
follows from \cite[Theorem 1]{rockafellar1974conjugate}. Hence the function $v_{\gamma}(x,y) + \frac{\rho_v}{2}\|(x,y)\|^2$ restricted on set $X \times \mathbb{R}^m$ is convex and $v_{\gamma}(x,y)$ is $\rho_v$-weakly convex on $X \times \mathbb{R}^m$.  
   \end{proof}

\subsection{Lipschitz continuity}

To establish local Lipschitz continuity for the Moreau envelope function $v_\gamma(x, y)$, we will need to strengthen Assumption \ref{a2new} as follows. 

\begin{assumption} $X$ is convex, $g(x, y)$ is convex on ${\cal O} \times Y$ where  ${\cal O}$ is an open convex set containing set $ X$. $f$ is $\rho_f$-weakly convex on $D$ with $\rho_f>0$, where $D=\{(x,y)\in {\cal O} \times Y \mid g(x,y)\leq 0\}$, and ${\cal F}(x)\not =\emptyset $ for all $x\in {\cal O}$, where ${\cal F}(x)$ is defined in \eqref{defFC}.  \label{a3}
\end{assumption} 
\begin{theorem}\label{Thm3} Suppose that Assumptions \ref{a1} and \ref{a3} hold. If $\rho_f > 0$ and $\gamma \in (0, 1/\rho_f)$, then $v_\gamma(x,y)$ is Lipschitz continuous around any point in set $X\times Y$.
\end{theorem}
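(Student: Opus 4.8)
The plan is to deduce local Lipschitz continuity from the weak convexity already established in Theorem~\ref{Thm2}, exploiting the classical fact that a finite-valued convex function on an \emph{open} convex set is automatically locally Lipschitz. The role of the open set $\mathcal{O}$ in Assumption~\ref{a3} is exactly to supply such an open neighborhood of $X$; since the proximal term $\frac{1}{2\gamma}\|w-y\|^2$ is meaningful for every $y\in\mathbb{R}^m$, the function $v_\gamma(x,\cdot)$ already extends to all of $\mathbb{R}^m$, so no openness is needed in the $y$-variable and the target neighborhood will be $\mathcal{O}\times\mathbb{R}^m$, which is open and contains $X\times Y$.

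First I would re-run the computation of Theorem~\ref{Thm2} verbatim, but with $\mathcal{O}$ in place of $X$ and $D$ in place of $C$. Assumption~\ref{a3} supplies precisely the ingredients this requires: $g$ is convex on $\mathcal{O}\times Y$ and $f$ is $\rho_f$-weakly convex on $D=\{(x,y)\in\mathcal{O}\times Y: g(x,y)\le 0\}$. Hence the joint function $\phi_{\gamma,\rho_v}$ is convex for $\rho_v=\frac{\rho_f}{1-\gamma\rho_f}$, and partial minimization over $w$ (via \citep[Theorem~1]{rockafellar1974conjugate}) yields that $v_\gamma(x,y)+\frac{\rho_v}{2}\|(x,y)\|^2$ is convex on $\mathcal{O}\times\mathbb{R}^m$.

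Next I would verify that $v_\gamma$ is real-valued on $\mathcal{O}\times\mathbb{R}^m$. For fixed $(x,y)$ with $x\in\mathcal{O}$, the inner objective $w\mapsto f(x,w)+\frac{1}{2\gamma}\|w-y\|^2+\delta_D(x,w)$ is proper, since $\mathcal{F}(x)\neq\emptyset$ by Assumption~\ref{a3}, and it is $(1/\gamma-\rho_f)$-strongly convex in $w$ because $\gamma\in(0,1/\rho_f)$; being also lower semicontinuous and coercive, it attains its minimum at a finite value. Thus $v_\gamma(x,y)\in\mathbb{R}$ for every $(x,y)\in\mathcal{O}\times\mathbb{R}^m$, so the convex function of the previous step is finite throughout an open convex set.

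Finally I would invoke the classical fact that a convex function finite on an open convex set is locally Lipschitz there (e.g. \citep{rockafellar2009variational}). Applying this to $v_\gamma+\frac{\rho_v}{2}\|\cdot\|^2$ on $\mathcal{O}\times\mathbb{R}^m$ and subtracting the smooth, hence locally Lipschitz, quadratic shows $v_\gamma$ is locally Lipschitz on $\mathcal{O}\times\mathbb{R}^m$; since $X\times Y\subseteq\mathcal{O}\times\mathbb{R}^m$ and the latter is open, $v_\gamma$ is Lipschitz continuous around every point of $X\times Y$. The main obstacle is not the convexity transfer but the finiteness step at boundary points of $X$: convexity alone yields local Lipschitz continuity only on the \emph{interior} of the effective domain, so everything hinges on using the nonemptiness of $\mathcal{F}(x)$ for all $x \in \mathcal{O}$ together with the strong convexity coming from $\gamma<1/\rho_f$ to certify that $v_\gamma$ stays finite on a genuinely open set surrounding $X\times Y$.
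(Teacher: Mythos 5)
Your proposal is correct and follows essentially the same route as the paper's proof: extend the weak-convexity construction of Theorem~\ref{Thm2} over the open set $\mathcal{O}$, use the nonemptiness of $\mathcal{F}(x)$ on $\mathcal{O}$ together with the strong convexity in $w$ (from $\gamma<1/\rho_f$) to show $v_\gamma+\frac{\rho_v}{2}\|\cdot\|^2$ is a proper convex function finite on the open set $\mathcal{O}\times\mathbb{R}^m\supseteq X\times\mathbb{R}^m$, and then invoke the classical fact (\cite[Theorem 10.4]{rockafellar1970convex}) that a convex function is locally Lipschitz on the interior of its domain. The only cosmetic difference is that the paper takes $\rho_v$ strictly above the threshold to get joint strong convexity of $\phi_{\gamma,\rho_v}$, whereas you work at the threshold and argue strong convexity in $w$ pointwise; both yield the needed finiteness.
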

	\begin{proof} By assumption,   ${\cal F}(x)\not =\emptyset$  for all  $x$ in  the open set ${\cal O}$.  Since when $\gamma \in (0, 1/\rho_f)$ and $\rho_v  > \frac{\rho_f}{1 - \gamma\rho_f} $, the function given in \eqref{lem1_eq1} is strongly convex with respect to $(x,y,w)$ and thus $\phi_{\gamma, \rho_v}(x,y,w)$ is strongly convex with respect to $(x, y, w)$. Hence, when $\gamma \in (0, 1/\rho_f)$ and $\rho_v  > \frac{\rho_f}{1 - \gamma\rho_f} $, for any $x \in \mathcal{O}$ and ${\cal F}(x) \neq \emptyset$, $$v_{\gamma}(x,y) + \frac{\rho_v}{2}\|(x,y)\|^2 = \inf_{w \in \mathbb{R}^m}  \left\{ \phi_{\gamma, \rho_v}(x,y,w) \right\} \in \mathbb{R}$$ and thus
				the function $v_{\gamma}(x,y) + \frac{\rho_v}{2}\|(x,y)\|^2 $ is proper convex.
				Since   $\mathrm{dom}(v_{\gamma} + \frac{\rho_v}{2}\|\cdot\|^2)$ can be characterized as $$\{(x,y)~|~v_{\gamma}(x,y) + \frac{\rho_v}{2}\|(x,y)\|^2<+\infty\}= \{(x,y): {\cal F}(x)\not =\emptyset\}\supseteq {\cal O} \times \mathbb{R}^m\supseteq X \times \mathbb{R}^m$$  we have that $X \times \mathbb{R}^m \subseteq \mathrm{int}(\mathrm{dom}(v_{\gamma}(\cdot) + \frac{\rho_v}{2}\|\cdot\|^2))$. Then, the result on  local Lipschitz continuity of the function $v_{\gamma}(x,y) + \frac{\rho_v}{2}\|(x,y)\|^2$ follows from \cite[Theorem 10.4]{rockafellar1970convex}, and local Lipschitz continuity of the function $v_{\gamma}(x,y)$ follows.
			\end{proof}

\subsection{Sensitivity analysis of the Moreau envelope function}

For convenience in this section we convert a function $\phi(z)$ defined on a convex $U$ into an extended-valued function 
$\phi (z)+\delta_U(z)$. 
			  We recall  that  an extended-valued function $\varphi(z) : \mathbb{R}^d \rightarrow \mathbb{R} \cup\{\infty\}$ is a $\rho$-weakly convex function if $\varphi(z)+ \frac{\rho}{2}\|z\|^2$ is convex. 
Let $\varphi: \mathbb{R}^d \rightarrow \mathbb{R} \cup\{\infty\}$ be a proper closed weakly convex function. Then at any point $\bar{z} \in {\rm dom} \varphi:=\{z| \varphi( z)<\infty\}$, we define the Fr\'{e}chet (regular) subdifferential of $f$ at $\bar{z}$ as
\[
\partial \varphi(\bar{z}) = \{ v \in \mathbb{R}^d ~|~ \varphi(z)  \ge \varphi(\bar{z}) + \langle v, z - \bar{z}\rangle + o(\|z - \bar{z}\|) , \forall z \in \mathbb{R}^d\},
\] where $o(t)$ is a function satisfying $o(t)/t \rightarrow 0$ as $t\downarrow 0$.
If $\bar{z} \notin \mathrm{dom}\, \varphi$, we set $\partial \varphi(\bar{z}) = \emptyset$. 	
 For weakly convex functions, the Fr\'{e}chet (regular) subdifferential coincides with the limiting subdifferential and the Clarke subdifferential, see, e.g., \cite[Proposition 3.1 and Theorem 3.6]{ngai2000approximate}. 
When $\varphi$ is a differentiable at $\bar{z}$, $\partial \varphi(\bar{z})$ reduces to the gradient $\nabla \varphi(\bar{z})$ (see, e.g., \cite[Exercise 8.8]{rockafellar2009variational}). If $\varphi$ is a convex function, the Fr\'{e}chet (regular) subdifferential coincides with the subdifferential defined in the usual convex
analysis sense (see, e.g., \cite[Proposition 8.12]{rockafellar2009variational}), i.e.,
\[
\partial \varphi(\bar{z}) = \{ v \in \mathbb{R}^d ~|~ \varphi(z)  \ge \varphi(\bar{z}) + \langle v, z - \bar{z}\rangle  , \forall z \in \mathbb{R}^d\}.
\]
For a convex set $U$ and a point $\bar z$, we denote by
$ \mathcal{N}_U(\bar z):=\partial \delta_U (\bar z)$
			  the normal cone to $U$ at $\bar  z$.
 		If $\varphi$ is a $\rho$-weakly convex function, then it follows  from \cite[Exercise 8.8]{rockafellar2009variational} that 
\begin{equation}\label{wc_subdiff_sum}
	\partial (\varphi(z)+ \frac{\rho}{2}\|z\|^2) = \partial \varphi(z) +  \rho z .
\end{equation}
Based on this formula, we now extend the partial  subdifferentiation rule for convex functions (case $\rho=0$) in \cite[Proposition 1]{ye2021difference} to the whole class of weakly convex functions ($\rho\geq 0$).
		\begin{proposition}[Partial subdifferentiation]\label{partiald}
			Let $\rho\geq 0$ and $\varphi:\mathbb{R}^n\times \mathbb{R}^m\rightarrow [-\infty,+\infty]$ be a $\rho$-weakly convex function and let $(\bar x,\bar y)$ be a point where $\varphi$ is finite. Then
			\begin{equation} \label{partialsubg} \partial \varphi(\bar x,\bar y) \subseteq \partial_x \varphi(\bar x,\bar y) \times \partial_y \varphi(\bar x,\bar y),\end{equation}
			where $\partial_x \varphi(\bar x,\bar y) $ and $ \partial_y \varphi(\bar x,\bar y)$  denote the partial regular subdifferential of $\varphi$ with respect to $x$ and $y$ at $(\bar x,\bar y)$, respectively.
			The inclusion (\ref{partialsubg}) becomes an equality under one of the following conditions.
			\begin{itemize}
				\item[(a)] For every $\xi \in \partial_x \varphi (\bar x,\bar y)$, it holds that $ \varphi (x,y)-\varphi (\bar x, y) \geq \langle \xi, x-\bar x\rangle - \frac{\rho}{2}\|x-\bar{x}\|^2, \  \forall (x,y) \in \mathbb{R}^n\times \mathbb{R}^m .$
				\item[(b)] $\varphi(x,y) =\varphi_1(x) +\varphi_2(y)$.
				\item[(c)] For any $\varepsilon>0$, there is $\delta>0$ such that 
				\begin{eqnarray}
					\mbox{ either } 	&& \partial_x \varphi (\bar x,\bar y) \subseteq \partial_x \varphi (\bar x, y)  +\varepsilon \mathbb{B}_{\mathbb{R}^n} \quad \forall y\in 
					\mathbb{B}_\delta (\bar y) \label{(7)} \\
					\mbox{ or } 	&& \partial_y \varphi (\bar x,\bar y) \subseteq \partial_y \varphi ( x, \bar y)  +\varepsilon \mathbb{B}_{\mathbb{R}^m} \quad \forall x\in 
					\mathbb{B}_\delta (\bar x),\label{(8)}
				\end{eqnarray}
				where $\mathbb{B}_\delta(\bar x)$ denotes the open ball centered at $\bar x$ with radius equal to $\delta$ and $\mathbb{B}_{\mathbb{R}^n}$ denotes the open unit ball centered at the origin in $\mathbb{R}^n$.
				\item[(d)] $\varphi(x,y)$  is continuously differentiable  respect to one of the variables $x$ or   $y $  at    $(\bar x,\bar y)$.
			\end{itemize}
			Moreover 
			$(b)\Longrightarrow (a), (d) \Longrightarrow (c) \Longrightarrow(a).$
		\end{proposition}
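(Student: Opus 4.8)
The plan is to reduce the whole statement to the convex case already established in \cite[Proposition 1]{ye2021difference} (the case $\rho=0$) by passing to the convexified function
$\psi(x,y) := \varphi(x,y) + \frac{\rho}{2}\|(x,y)\|^2$. By the definition of $\rho$-weak convexity, $\psi$ is convex, and it is finite at $(\bar x,\bar y)$ because $\varphi$ is; hence the convex partial subdifferentiation rule applies verbatim to $\psi$, and the task becomes translating its conclusion back to $\varphi$ through the sum rule \eqref{wc_subdiff_sum}.

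First I would record how the subdifferentials of $\psi$ and $\varphi$ relate. Applying \eqref{wc_subdiff_sum} to the full function gives $\partial\psi(\bar x,\bar y) = \partial\varphi(\bar x,\bar y) + \rho(\bar x,\bar y)$. Applying the same formula in each variable separately, where the quadratic in the frozen variable is a harmless additive constant, yields $\partial_x\psi(\bar x,\bar y) = \partial_x\varphi(\bar x,\bar y) + \rho\bar x$ and $\partial_y\psi(\bar x,\bar y) = \partial_y\varphi(\bar x,\bar y) + \rho\bar y$. Since $(\partial_x\varphi + \rho\bar x)\times(\partial_y\varphi + \rho\bar y) = (\partial_x\varphi\times\partial_y\varphi) + (\rho\bar x,\rho\bar y)$, feeding the convex inclusion $\partial\psi \subseteq \partial_x\psi\times\partial_y\psi$ through these identities and cancelling the common translation $(\rho\bar x,\rho\bar y)$ on both sides produces exactly \eqref{partialsubg}.

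Next I would transfer each equality condition. The decisive computation is for (a): writing a generic element of $\partial_x\psi(\bar x,\bar y)$ as $\xi + \rho\bar x$ with $\xi\in\partial_x\varphi(\bar x,\bar y)$, the plain convex subgradient inequality $\psi(x,y) - \psi(\bar x,y) \ge \langle \xi + \rho\bar x,\, x-\bar x\rangle$ rearranges, after completing the square, into $\varphi(x,y) - \varphi(\bar x,y) \ge \langle \xi,\, x-\bar x\rangle - \frac{\rho}{2}\|x-\bar x\|^2$, since the residual quadratic $\rho\langle\bar x, x-\bar x\rangle - \frac{\rho}{2}\|x\|^2 + \frac{\rho}{2}\|\bar x\|^2$ collapses precisely to $-\frac{\rho}{2}\|x-\bar x\|^2$. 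Thus condition (a) for $\varphi$ is literally the convex form of (a) for $\psi$. Conditions (b) and (d) transfer immediately, because adding the separable smooth term $\frac{\rho}{2}\|x\|^2 + \frac{\rho}{2}\|y\|^2$ preserves both separability and $C^1$ dependence on either variable; condition (c) transfers because the shift $\rho\bar x$ (resp. $\rho\bar y$) is identical on both sides of each approximate inclusion and cancels inside the $\varepsilon$-enlargement. The implications $(b)\Rightarrow(a)$ and $(d)\Rightarrow(c)\Rightarrow(a)$ then follow from the corresponding convex implications, carried across by these equivalences.

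The step I expect to require the most care is the quadratic-completion bookkeeping in (a): the deliberate $-\frac{\rho}{2}\|x-\bar x\|^2$ correction appearing in the statement is exactly what absorbs the shift $\rho\bar x$ hidden in $\partial_x\psi$, so the signs and cross terms must be handled precisely rather than waved through. I would also check at the outset that $\psi$ is proper, so that the convex subdifferential calculus of \cite{ye2021difference} is legitimate; this is where the hypothesis that $\varphi$ is finite at $(\bar x,\bar y)$ together with the standing weak-convexity assumption enters.
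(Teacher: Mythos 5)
Your proposal is correct and takes essentially the same route as the paper: the paper presents Proposition~\ref{partiald} precisely as an extension of the convex case \cite[Proposition 1]{ye2021difference} obtained ``based on'' the sum formula \eqref{wc_subdiff_sum}, i.e., by passing to the convexified function $\varphi + \frac{\rho}{2}\|\cdot\|^2$ and translating the (partial) subdifferentials back, which is exactly your reduction. Your quadratic-completion computation for condition (a) and the cancellation of the shifts $\rho\bar x$, $\rho\bar y$ in conditions (c) and \eqref{partialsubg} correctly supply the bookkeeping that the paper leaves implicit.
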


We will now show how to calculate elements in the regular subdifferential of $v_{\gamma}(x,y)$. 
	
			\begin{theorem}\label{Thm4.1} 
Suppose  Assumptions \ref{a1} and \ref{a3} hold.				Let $\rho_f>0, \gamma \in (0, 1/\rho_f)$, $\rho_v  \ge \frac{\rho_f}{1 - \gamma\rho_f}$, $(\bar{x}, \bar{y}) \in X \times \mathbb{R}^m$ and $\tilde{y}\in {S}_{\gamma}(\bar{x},\bar{y})$. Then
				\begin{eqnarray}\label{inclusionvaluef}
					\lefteqn{\partial  v_\gamma(\bar{x},\bar{y}) \supseteq} \nonumber \\
					&& \Bigg \{ \left(\xi_x, (\bar{y} - \tilde{y})/\gamma \right)  ~ \Big| ~ (\xi_x,0)\in   \partial f(\bar x,\tilde y) + \sum_{i = 1}^{l} \eta_i \partial g_i(\bar{x}, \tilde{y}) + \{0\} \times \mathcal{N}_{ Y}(\tilde{y}) +  \{ (0,\tilde{y} - \bar{y})/\gamma )\}, \nonumber \\
					&& \hspace*{230pt}  
					\eta \in \mathbb{R}^l, \eta \ge 0, \sum_{i = 1}^{l}\eta_i g_i(\bar{x},\tilde{y}) = 0  \Bigg \},
				\end{eqnarray} 	
				and the equality holds in (\ref{inclusionvaluef}) provided that 
				\begin{equation} \mathcal{N}_E(\bar{x}, \tilde{y}) = \Big\{ \sum_{i = 1}^{l} \eta_i \partial g_i(\bar{x}, \tilde{y}) + \{0\} \times \mathcal{N}_{ Y}(\tilde{y}) \mid \eta\ge 0, \sum_{i = 1}^{l}\eta_i g_i(\bar{x},\tilde{y}) = 0 \Big\}, \label{normalconeeq}\end{equation}
				
				where $E:=\{(x,y)\in \mathbb{R}^n \times Y: g(x,y)\leq 0\}$. 
				
				Moreover if  the partial derivative formula holds
				\begin{eqnarray}
					\partial f(\bar x,\tilde y)= \partial_x f(\bar x,\tilde y)\times \partial_y f(\bar x,\tilde y) ,&& 
					\partial g_i(\bar x,\tilde y)= \partial_x g_i(\bar x, \tilde y)\times \partial_y g_i(\bar x, \tilde y), \label{partialdnew}
				\end{eqnarray}
				then 
				\begin{equation}\label{valuefinc}
					\bigcup_{ \eta \in \mathcal{M}_\gamma(\bar{x}, \bar{y}, \tilde{y})} \left ( \left\{\partial_x f(\bar{x}, \tilde{y}) + \sum_{i = 1}^{l} \eta_i \partial_x g_i(\bar{x}, \tilde{y}) \right\} \times \left\{(\bar{y} - \tilde{y})/ \gamma \right\} \right )\subseteq \partial v_\gamma(\bar{x},\bar{y}),
				\end{equation}
				where $\mathcal{M}_\gamma(\bar{x}, \bar{y}, \tilde{y})$ denotes the set of Karush-Kuhn-Tucker (KKT) multipliers of the minimization problem in the definition of Moreau envelope $v_{\gamma}$ in \eqref{def_v1}, i.e.,
				\[
				\begin{aligned}
					\mathcal{M}_\gamma(\bar{x}, \bar{y}, \tilde{y})
					:= &\Bigg \{ \eta \in \mathbb{R}^l_+ \Big|  0 \in \partial_y f(\bar{x}, \tilde{y}) + \frac{1}{\gamma}( \tilde{y} - \bar{y}) + \sum_{i = 1}^{l} \eta_i \partial_y g_i(\bar{x}, \tilde{y}) + \mathcal{N}_{Y}(\tilde{y}), \\
					& \hspace{240pt} \sum_{i = 1}^{l}\eta_i g_i(\bar{x}, \tilde{y}) = 0   \Bigg \},
				\end{aligned}
				\]
				and the equality in  (\ref{valuefinc}) holds provided that 
				(\ref{normalconeeq}) holds. 
			\end{theorem}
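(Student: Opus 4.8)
The plan is to obtain $\partial v_\gamma(\bar x,\bar y)$ in three stages: convexify $v_\gamma$ using its weak convexity, differentiate through the infimum that defines the Moreau envelope, and finally unwind the constraint calculus into the KKT-multiplier form. The conceptual engine is the marginal-function step, and the technical friction will be tracking inclusion directions and the two weak-convexity shifts.

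\emph{Stage 1 (reduction to a convex marginal function).} By Theorem~\ref{Thm2} the shifted function $V:=v_\gamma+\frac{\rho_v}{2}\|\cdot\|^2$ is convex, so by \eqref{wc_subdiff_sum} it suffices to compute $\partial V(\bar x,\bar y)$ and then use $\partial v_\gamma(\bar x,\bar y)=\partial V(\bar x,\bar y)-\rho_v(\bar x,\bar y)$. From \eqref{phi-gamma} we have $V(x,y)=\inf_{w}\phi_{\gamma,\rho_v}(x,y,w)$ with $\phi_{\gamma,\rho_v}$ jointly convex, and under Assumption~\ref{a3} the infimum is finite and attained at $\tilde y\in S_\gamma(\bar x,\bar y)$ (as in the proof of Theorem~\ref{Thm3}, since $\mathcal F(\bar x)\neq\emptyset$). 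I would then invoke the standard subdifferential rule for convex marginal functions \citep{rockafellar1970convex}: when the infimum of a jointly convex function is attained, $\partial V(\bar x,\bar y)=\{(\xi_x,\xi_y):(\xi_x,\xi_y,0)\in\partial\phi_{\gamma,\rho_v}(\bar x,\bar y,\tilde y)\}$, where the final $0$ occupies the slot of the minimized variable $w$.

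\emph{Stage 2 (differentiating $\phi_{\gamma,\rho_v}$).} I would split $\phi_{\gamma,\rho_v}(x,y,w)=\Psi(x,w)+Q(x,y,w)$, where $\Psi(x,w):=f(x,w)+\frac{\rho_f}{2}\|(x,w)\|^2+\delta_C(x,w)$ is convex (as established in the proof of Theorem~\ref{Thm2}) and $Q$ collects the remaining quadratic terms in \eqref{lem1_eq1}, which are smooth. The sum rule is then exact and $\nabla Q$ is explicit. Setting the $w$-component to $0$ (as required by Stage~1) cancels the cross term $-\frac1\gamma\langle w,y\rangle$ and fixes the $w$-block of the $\Psi$-subgradient; converting $\partial\Psi$ back to $\partial(f+\delta_C)$ via \eqref{wc_subdiff_sum} and subtracting $\rho_v(\bar x,\bar y)$ should make the two weak-convexity shifts $\rho_v$ and $\rho_f$ cancel, leaving the $y$-component equal to $(\bar y-\tilde y)/\gamma$ and the $x$-component $\xi_x$ characterized by $(\xi_x,(\bar y-\tilde y)/\gamma)\in\partial(f+\delta_C)(\bar x,\tilde y)$. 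This bookkeeping across the two shifts and the coupling term is the step I expect to demand the most care.

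\emph{Stage 3 (constraint calculus and the multiplier form).} Writing $C=E\cap(X\times\mathbb R^m)$ with $E$ as in the statement, the always-valid direction of the convex sum rule gives $\partial(f+\delta_C)(\bar x,\tilde y)\supseteq\partial f(\bar x,\tilde y)+\mathcal N_E(\bar x,\tilde y)$, absorbing $0\in\mathcal N_X(\bar x)$ in the $x$-slot; substituting the multiplier description \eqref{normalconeeq} of $\mathcal N_E$ then yields the inclusion \eqref{inclusionvaluef}. Equality holds precisely when the marginal rule (already exact) is combined with exactness of this sum rule and of the normal-cone formula \eqref{normalconeeq}, which is the stated qualification. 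Finally, for \eqref{valuefinc} I would assume the product formula \eqref{partialdnew} (justified through Proposition~\ref{partiald}) and separate each subdifferential into its $x$- and $w$-blocks: the $w$-block of \eqref{inclusionvaluef}, together with $\eta\ge0$ and complementarity, is exactly the KKT system defining $\mathcal M_\gamma(\bar x,\bar y,\tilde y)$ (with $\frac1\gamma(\tilde y-\bar y)$ arising as the $w$-derivative of the proximal term), while the $x$-block produces $\partial_x f(\bar x,\tilde y)+\sum_i\eta_i\partial_x g_i(\bar x,\tilde y)$; pairing these with the $y$-component $(\bar y-\tilde y)/\gamma$ gives \eqref{valuefinc}, again with equality under \eqref{normalconeeq}.
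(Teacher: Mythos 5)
Your proposal is correct and follows essentially the same route as the paper's proof: convexify $v_\gamma$ via the $\rho_v$-shift using Theorem \ref{Thm2}, apply the marginal-function subdifferential rule at the attained minimizer $\tilde y$ (the paper cites \cite[Theorem 24]{rockafellar1974conjugate} where you cite \cite{rockafellar1970convex}, but it is the same standard fact), compute $\partial \phi_{\gamma,\rho_v}$ through the exact sum rule with the smooth quadratic part so that the $\rho_v$ and $\rho_f$ shifts cancel, and then unwind $\partial(f+\delta_C)(\bar x,\tilde y)$ into the KKT-multiplier form. The only differences are presentational: the paper fixes the $y$-component first via Proposition \ref{partiald} (continuous differentiability of $\phi_{\gamma,\rho_v}$ in $y$) and then states the formula for $\partial\phi_{\gamma,\rho_v}$ directly, and it defers your Stage 3 constraint calculus to the proof of Theorem 3 in \cite{ye2021difference}, which you spell out instead.
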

			\begin{proof} By Theorem  \ref{Thm2},   when 
   $\gamma \in (0, 1/\rho_f)$ and $\rho_v  \ge \frac{\rho_f}{1 - \gamma\rho_f}$,  the function $v_{\gamma}(x,y) + \frac{\rho_v}{2}\|(x,y)\|^2 = \inf_{w \in \mathbb{R}^m} 
    \phi_{\gamma, \rho_v}(x, y, w) $,
    where 
   $$ \phi_{\gamma, \rho_v}(x, y, w)  := \,f(x,w) + \frac{\rho_v}{2}\|(x,y)\|^2 + \frac{1}{2\gamma}\|w - y\|^2 + \delta_C(x,w)$$    is convex and the  extended-valued function $\phi_{\gamma, \rho_v}(x, y, w)$ is convex with respect to $(x,y,w)$ on $\mathbb{R}^n \times \mathbb{R}^m \times \mathbb{R}^m$. 
						Notice that  $$S_{\gamma}(x,y) :=\displaystyle  \mathrm{argmin}_ {w \in Y} \left\{ f(x,w) + \frac{1}{2\gamma}\|w - y\|^2+ \delta_C(x,w)  \right\} = \mathrm{argmin}_{w \in \mathbb{R}^m}   \phi_{\gamma, \rho_v}(x,y,w) .$$		According to \cite[Theorem 24]{rockafellar1974conjugate}, 
      since $\tilde{y} \in S_{\gamma}(x,y) $, we have 
				\begin{equation*}
					\partial \left  (v_\gamma (\cdot)+ \frac{\rho_v}{2}\|\cdot\|^2 \right ) (\bar x,\bar y )= \left \{ (\xi_x, \xi_y)\in \mathbb{R}^n\times \mathbb{R}^m ~|~ (\xi_x,\xi_y,0) \in \partial \phi_{\gamma, \rho_v} (\bar{x}, \bar{y}, \tilde{y}) \right \}.
				\end{equation*}
				Since the function $ \phi_{\gamma, \rho_v}(x,y,w)$ is continuously differentiable with respect to variable $y$
				we have  from Proposition \ref{partiald} that for any $(x,y,w)$,
                \[
                \begin{aligned}
                &\left \{ (\xi_x, \xi_y)\in \mathbb{R}^n\times \mathbb{R}^m ~|~ (\xi_x,\xi_y,0) \in \partial \phi_{\gamma, \rho_v} (\bar{x}, \bar{y}, \tilde{y}) \right \} \\
                = \, & \{ (\xi_x, (\bar{y} - \tilde{y})/\gamma + \rho_v \bar{y})\in \mathbb{R}^n\times \mathbb{R}^m  ~|~  
					 (\xi_x,(\bar{y} - \tilde{y})/\gamma + \rho_v \bar{y},0) \in \partial \phi_{\gamma, \rho_v} (\bar{x}, \bar{y}, \tilde{y}) \}.
                \end{aligned}
                \]  
                It follows that	\begin{eqnarray*}
					\lefteqn{\partial \left  (v_\gamma (\cdot)+ \frac{\rho_v}{2}\|\cdot\|^2 \right ) (\bar x,\bar y )}\\
     &&= \left \{ (\xi_x, (\bar{y} - \tilde{y})/\gamma + \rho_v \bar{y}) ~|~  \xi_x \in \mathbb{R}^n, 
					 (\xi_x,(\bar{y} - \tilde{y})/\gamma + \rho_v \bar{y},0) \in \partial \phi_{\gamma, \rho_v} (\bar{x}, \bar{y}, \tilde{y}) \right \}.
				\end{eqnarray*} 
				By \eqref{wc_subdiff_sum} we have
				$$\partial v_\gamma (\bar x,\bar y)=\partial \left  (v_\gamma (\cdot)+ \frac{\rho_v}{2}\|\cdot\|^2 \right ) (\bar x,\bar y )-\rho_v (\bar x,\bar y).$$
				Therefore \begin{equation}
					\partial v_\gamma(\bar{x},\bar{y}) = \left\{ (\xi_x-\rho_v\bar{x}, (\bar{y} - \tilde{y})/\gamma ) ~|~ \xi_x \in \mathbb{R}^n, \left(\xi_x,(\bar{y} - \tilde{y})/\gamma + \rho_v \bar{y},0\right) \in \partial \phi_{\gamma, \rho_v} (\bar{x}, \bar{y}, \tilde{y}) \right\} \label{gvaluefunction}
				\end{equation} holds.    And because $v_{\gamma}(x,y) + \frac{\rho_v}{2}\|(x,y)\|^2 = \inf_{w \in \mathbb{R}^m}  \left\{ \phi_{\gamma, \rho_v}(x, y, w) \right\}$ and
				\[
				\begin{aligned}
					 &\partial \phi_{\gamma, \rho_v}(x, y, w) = \left\{ (\zeta_x + \rho_v x , \rho_v y + (y - w)/\gamma, \zeta_w + (w - y)/\gamma )~|~ (\zeta_x, \zeta_w) \in \partial \left(f + \delta_C\right)(x,y) \right\},
				\end{aligned}
				\]
				the conclusion can be proved by using the same arguments as in the proof of Theorem 3 in \cite{ye2021difference}.
			\end{proof}

This result can be viewed as an extension of Theorem 3 in \citet{ye2021difference}. When $\rho_f=0$ (i.e. $f$ is convex in both $x$ and $y$) and $\gamma =\infty $, setting $\bar{y}=\tilde{y}$, defining $1/\gamma=0$, $\mathcal{M}_\infty(\bar{x}, \bar{y}, \tilde{y})=\mathcal{M}(\bar{x}, \bar{y})$, where $\mathcal{M}(\bar{x}, \bar{y})$ represents the set of multipliers at $\bar y$ for $(P_{\bar x})$, Theorem 4 reduces to Theorem 3 in \citet{ye2021difference}.

\section{iP-DwCA: Inexact Proximal Difference of Weakly Convex Algorithm}
\label{sec:alg}

\subsection{Algorithm design}
\label{sec:alg-design}
We introduce one last assumption. 

\begin{assumption} $X$ and $Y$ are convex sets, and $F(x, y) = F_1(x, y) - F_2(x, y)$, where $F_1(x, y)$ and $F_2(x, y)$ are $\rho_F$-weakly convex on $X \times Y$. \label{a4}  
\end{assumption} 

We are now ready to develop an algorithm to solve $(BP)$ under Assumptions \ref{a1}--\ref{a4}. Consider the following problem class indexed by $\epsilon \geq 0$: 
\begin{equation*}\label{general_reformulated_problem_epsilon}
		{\rm (VP)}_\gamma^\epsilon~~~~~~~~~~\begin{aligned}
			\min_{(x,y)\in C}  ~~& F(x,y) \\
			s.t. ~~~~& f(x,y)-v_\gamma (x,y)\leq \epsilon.
		\end{aligned}
	\end{equation*} 

 Under Assumption \ref{a1},  ${\rm (VP)}_\gamma^\epsilon$ for $\epsilon = 0$ is equivalent to {\rm (BP)} (Theorem \ref{thm_reformulate}). When $\epsilon > 0$, ${\rm (VP)}_\gamma^\epsilon$ is a relaxation of {\rm (BP)}. It is clear that the lower level solution of ${\rm (VP)}_\gamma^\epsilon$ is $\epsilon$-optimal. The following result further establishes that for any $\delta > 0$, there exists $\epsilon > 0$ such that there is a local minimizer of the relaxed problem ${\rm (VP)}_\gamma^\epsilon$ that is $\epsilon$-close to the solution set of {\rm (BP)}. 			
			\begin{proposition} 
   \label{prop:prox}
				Suppose  $\mathcal{S}^*$,  the solution set of problem {\rm (BP)},  is nonempty and compact, and either $\gamma =\infty$ or $\rho_f>0$ and $\gamma \in (0, 1/\rho_f)$.  Then for any $\delta > 0$, there exists $\bar{\epsilon} > 0$ such that for any $\epsilon \in (0,\bar{\epsilon}]$, there exists $(x_{\epsilon},y_{\epsilon})$ which is  a local minimum of $\epsilon$-approximation problem ${\rm (VP)}_\gamma^\epsilon$ with  $\mathrm{dist}((x_{\epsilon},y_{\epsilon}),\mathcal{S}^*) < \delta$.
				
			\end{proposition}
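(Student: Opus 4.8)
The plan is to prove the result by a localization argument married to a compactness/contradiction scheme, exploiting that $(VP)_\gamma^0$ coincides with $(BP)$ (Theorem~\ref{thm_reformulate}), so that $\mathcal{S}^*$ is precisely the feasible-and-optimal set of the reformulation at $\epsilon=0$. A key preliminary observation is the one-sided bound $f(x,y)-v_\gamma(x,y)\ge 0$ for every $(x,y)\in C$: substituting $w=y$ into the infimum defining $v_\gamma$ (respectively the value function when $\gamma=\infty$) yields $v_\gamma(x,y)\le f(x,y)$. Hence the constraint $f-v_\gamma\le\epsilon$ is a genuine relaxation of $f-v_\gamma=0$, and every point of $\mathcal{S}^*$ is feasible for ${\rm (VP)}_\gamma^\epsilon$ for all $\epsilon\ge 0$.

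For each $\epsilon>0$ I would introduce the localized problem $(R_\epsilon)$ obtained from ${\rm (VP)}_\gamma^\epsilon$ by adjoining the constraint $\mathrm{dist}((x,y),\mathcal{S}^*)\le\delta$. Its feasible set is nonempty (it contains $\mathcal{S}^*$), closed (using continuity of $f$, the Lipschitz continuity of $v_\gamma$ from Theorem~\ref{Thm3} in the finite-$\gamma$ case and the corresponding continuity of the value function when $\gamma=\infty$, together with closedness of $C$), and bounded (since $\mathcal{S}^*$ is compact, the sublevel set $\{\mathrm{dist}(\cdot,\mathcal{S}^*)\le\delta\}$ is bounded). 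By the Weierstrass theorem, $(R_\epsilon)$ admits a global minimizer $(x_\epsilon,y_\epsilon)$.

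The crux is to produce $\bar\epsilon>0$ so that for all $\epsilon\in(0,\bar\epsilon]$ the adjoined distance constraint is \emph{inactive}, i.e. $\mathrm{dist}((x_\epsilon,y_\epsilon),\mathcal{S}^*)<\delta$; once this holds, local optimality for $(R_\epsilon)$ transfers to local optimality for ${\rm (VP)}_\gamma^\epsilon$, because on a small ball around $(x_\epsilon,y_\epsilon)$ the two problems have the same feasible points, which delivers the desired local minimizer within distance $\delta$. I would establish inactivity by contradiction: if no such $\bar\epsilon$ exists, choose $\epsilon_k\downarrow 0$ with $\mathrm{dist}((x_{\epsilon_k},y_{\epsilon_k}),\mathcal{S}^*)=\delta$, and by compactness extract a subsequence $(x_{\epsilon_k},y_{\epsilon_k})\to(\hat x,\hat y)$ with $\mathrm{dist}((\hat x,\hat y),\mathcal{S}^*)=\delta$, so $(\hat x,\hat y)\notin\mathcal{S}^*$. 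Continuity forces $(\hat x,\hat y)\in C$ and $f(\hat x,\hat y)-v_\gamma(\hat x,\hat y)\le\lim_k\epsilon_k=0$, which combined with the one-sided bound gives $f(\hat x,\hat y)=v_\gamma(\hat x,\hat y)$, so $(\hat x,\hat y)$ is feasible for $(BP)$. Finally, comparing objective values — each $(x^*,y^*)\in\mathcal{S}^*$ is feasible for $(R_{\epsilon_k})$, whence $F(x_{\epsilon_k},y_{\epsilon_k})\le F(x^*,y^*)$, and passing to the limit gives $F(\hat x,\hat y)\le\min_{\mathcal{S}^*}F$ — shows $(\hat x,\hat y)$ attains the optimal value of $(BP)$, hence $(\hat x,\hat y)\in\mathcal{S}^*$, contradicting $\mathrm{dist}((\hat x,\hat y),\mathcal{S}^*)=\delta>0$.

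The main obstacle is exactly this last contradiction step. It hinges on three ingredients working together: the one-sided inequality $f-v_\gamma\ge 0$; lower semicontinuity (here, continuity) of $f-v_\gamma$ so that the relaxed constraint passes cleanly to the limit, for which Theorem~\ref{Thm3} is essential in the finite-$\gamma$ regime and continuity of $v$ is needed when $\gamma=\infty$; and the value-comparison that upgrades ``feasible for $(BP)$'' to ``optimal for $(BP)$.'' Some care is also required to verify that inactivity of the distance constraint genuinely yields a local minimizer of ${\rm (VP)}_\gamma^\epsilon$ rather than merely of the restricted problem $(R_\epsilon)$.
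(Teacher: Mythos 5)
Your proposal is correct and follows essentially the same route as the proof the paper relies on (it omits its own proof, deferring to Proposition 6 of \citet{ye2021difference}): localize ${\rm (VP)}_\gamma^\epsilon$ to the compact set $\{\mathrm{dist}(\cdot,\mathcal{S}^*)\le\delta\}$, invoke Weierstrass for a global minimizer of the restricted problem, and use a compactness/contradiction argument -- feasibility passing to the limit via $f-v_\gamma\ge 0$ and the continuity of $v_\gamma$ from Theorem~\ref{Thm3}, plus the value comparison against points of $\mathcal{S}^*$ -- to show the distance constraint is inactive for small $\epsilon$, so that the restricted minimizer is a genuine local minimizer of ${\rm (VP)}_\gamma^\epsilon$. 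The only implicit ingredient is continuity of $F$, $f$, and $v_\gamma$ on the relevant compact set, which is available under the paper's standing Assumptions~\ref{a1}--\ref{a4}.
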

We omit the proof of Proposition \ref{prop:prox}, as it is a direct extension of Proposition 6 in \citet{ye2021difference}.

To solve ${\rm (VP)}_\gamma^\epsilon$, we rewrite ${\rm (VP)}_\gamma^\epsilon$ as:
 \begin{equation*} 
		{\rm DwC} - {\rm (VP)}_\gamma^\epsilon~~~~~~~~~~\begin{aligned}
			\min_{(x,y)\in C}  ~~& \left (F_1(x,y) + \frac{\rho_F}{2} \|(x,y)\|^2 \right )  - \left (F_2(x,y) + \frac{\rho_F}{2} \|(x,y)\|^2 \right ) \\
			s.t. ~~~~& \left(f(x,y) +\frac{\rho_v}{2}\|(x,y)\|^2 \right) - \left ( v_\gamma (x,y) +\frac{\rho_v}{2}\|(x,y)\|^2 \right) \leq \epsilon. \label{reformulation-DwC}
		\end{aligned}
 \end{equation*} 
In the above, we use Assumption \ref{a4} to rewrite the objective function as the difference of two convex functions.
Furthermore,  choosing $\gamma \in (0, 1/\rho_f)$ makes 
 $$\left( f(x,y)+ \frac{\rho_v}{2}\|(x,y)\|^2 \right) - \left( v_\gamma(x,y)+ \frac{\rho_v}{2}\|(x,y)\|^2 \right)$$
 a difference of two convex functions, because $v_\gamma(x, y)$ is $\rho_v$-weakly convex (Theorem \ref{Thm2}) and $\rho_v =  \frac{\rho_f}{1- \gamma \rho_f} > \rho_f$. That is, ${\rm DwC} - {\rm (VP)}_\gamma^\epsilon$ is a constrained DC program for $\gamma \in (0, 1/\rho_f)$. 

\cite{ye2021difference} proposed an algorithm called inexact proximal DCA (iP-DCA) for solving general constrained DC programs. At each iteration, iP-DCA linearizes the concave parts of the DC program and adds a proximal term to get a strongly convex optimization problem, then approximately solves that optimization problem. We can carry out the linearization for ${\rm DwC} - {\rm (VP)}_\gamma^\epsilon$ via Theorem \ref{Thm4.1} under Assumptions \ref{a1} and \ref{a3}.

In general, iP-DCA is guaranteed to converge for constrained DC programs satisfying standard constraint qualifications; see Section \ref{DCalg}. While these constraint qualifications never hold for ${\rm (VP)}_\gamma^\epsilon$ with $\epsilon = 0$, a standard constraint qualification holds for $\epsilon > 0$ when the lower level problem $(P_x)$ is strictly convex with respect to the lower level variable $y$; see Proposition \ref{prop4.2} in Section \ref{sec:con}.

Thus, we propose solving (BP) under Assumptions \ref{a1}--\ref{a4} by applying iP-DCA from \citet{ye2021difference} to ${\rm DwC} - {\rm (VP)}_\gamma^\epsilon$ with $\gamma \in (0, 1/\rho_f)$ and $\epsilon > 0$. We call this algorithm ``inexact proximal difference of weakly convex algorithm" (iP-DwCA), and outline its steps in the next subsection. 

\subsection{Algorithm steps}
\label{sec:alg-steps}
We are now ready to describe the inexact proximal DwC algorithm (iP-DwCA) to solve 
DwC-${\rm (VP)}_\gamma^\epsilon$. Given a current iterate $(x^k, y^k)$ for each $k=0,1,\ldots$, we first solve the proximal lower-level problem parameterized  by $x^k$ and $y^k$, and with $\gamma \in (0,\frac{1}{\rho_f})$,
\begin{equation}\label{LL-iter}
	\min_{y \in Y}   f(x^k,y) + \frac{1}{2\gamma}\left\|y - y^k \right\|^2, ~ s.t.~ g(x^k,y) \le 0,
\end{equation}
to obtain the value $v_\gamma(x^k,y^k)$, a solution $\tilde{y}^k \in {S}_\gamma(x^k, y^k)$ and a corresponding KKT multiplier $\eta^k \in \mathcal{M}_\gamma(x^k, y^k,\tilde{y}^k)$,
where $\mathcal{M}_\gamma(x^k, y^k,\tilde{y}^k)$ denotes the set of KKT multipliers of the proximal lower-level problem parameterized  by $x^k$ and $y^k$, i.e.,
\[
\begin{aligned}
	\mathcal{M}_\gamma(x^k, y^k,\tilde{y}^k)
	:= &\Bigg \{ \eta \in \mathbb{R}^l_+ \Big|  0 \in \partial_y f(x^k, \tilde{y}^k) + \frac{1}{\gamma}( \tilde{y}^k - y^k) + \sum_{i = 1}^{l} \eta_i \partial_y g_i(x^k, \tilde{y}^k) + \mathcal{N}_{Y}(\tilde{y}^k), \\
	& \hspace{240pt} \sum_{i = 1}^{l}\eta_i g_i(x^k, \tilde{y}^k) = 0   \Bigg \}.
\end{aligned}
\]
 Select 
\begin{eqnarray}
	\xi_0^k\in \partial F_2(x^k,y^k), \quad \xi_1^k \in \partial_x f(x^k,\tilde{y}^k) + \sum_{i = 1}^{l} \eta^k_i \partial_x g_i(x^k,\tilde{y}^k). \label{subgvaluefunction}
\end{eqnarray}
Then, if the partial derivative formula  (\ref{partialdnew}) holds, by sensitivity analysis, $(\xi_1^k, (y^k - \tilde{y}^k)/\gamma )$ is an element of the subdifferential $ \partial v_{\gamma}(x^k, y^k)$ (see (\ref{valuefinc}) in Theorem \ref{Thm4.1}).
Compute $(x^{k+1},y^{k+1})$ as an approximate minimizer of the strongly convex subproblem for problem DwC-${\rm (VP)}_\gamma^\epsilon$ given by				
\begin{equation} \label{DCA2_subproblem}
	\begin{aligned}
		\underset{(x,y)\in C}{\text{min}} ~~   & \,\,  {\varphi}_k(x,y) :=F_1(x,y) + \frac{\rho_F }{2}\left\|(x,y) \right\|^2 - \left\langle \xi_0^k + \rho_F(x^k,y^k), (x,y) \right\rangle   \\
		& +\beta_k \max\left\{f(x,y) + \frac{\rho_v}{2}\|(x,y)\|^2- V_k(x,y) - \epsilon, 0 \right\} + \frac{\alpha}{2} \left\|(x,y) - (x^k,y^k) \right\|^2, 
	\end{aligned}
\end{equation}
where $\alpha >0$, $\rho_v \ge  \frac{\rho_f}{1 - \gamma\rho_f}$, $\beta_k$ represents the adaptive penalty parameter and
\[
V_k(x,y):= v_\gamma(x^k,y^k) + \frac{\rho_v}{2}\left\|(x^k,y^k) \right\|^2 +  \left\langle \xi_1^{k} + \rho_v x^k, x - x^k \right\rangle + \left\langle (y^k - \tilde{y}^k)/\gamma + \rho_v y^k , y- y^k  \right\rangle.
\]
We introduce an inexact condition for choosing $(x^{k+1},y^{k+1})$:
\begin{equation}\label{inexact}
	\mathrm{dist}\left(0, \partial \varphi_k(x^{k+1},y^{k+1}) + \mathcal{N}_C(x^{k+1},y^{k+1})\right) \le \frac{\sqrt{2}}{2} \alpha \left\|(x^{k},y^{k}) - (x^{k-1},y^{k-1}) \right\|,
\end{equation}
where $\mathrm{dist}(z, \Omega) $ is the distance from $z$ to $\Omega$.
Using above constructions and
letting \begin{equation} 
	t^{k+1} = \max \left\{ f(x^{k+1},y^{k+1}) + \frac{\rho_v}{2}\|(x^{k+1},y^{k+1})\|^2- V_k(x^{k+1},y^{k+1})- \epsilon, 0 \right\},\label{deft}
\end{equation} 
we are ready to present  the iP-DwCA in Algorithm \ref{DCA2}.
Note that when $F_1, F_2, f$ are convex in both $x$ and $y$,  i.e. $\rho_F = 0, \rho_f=0$ in Assumptions \ref{a1} and \ref{a4}, then setting $\gamma = \infty, \rho_v = 0$, and defining
$$\mathcal{M}_\infty({x}, {y}, \tilde{y})=\mathcal{M}({x}, {y})$$ 
in iP-DwCA recovers the iP-DCA proposed and studied in \citet{ye2021difference,pmlr-v162-gao22j}. 
To ensure that the iP-DwCA is well defined, we require following standing assumption throughout this paper.
\begin{assumption}\label{A5}
	$\mathcal{M}_\gamma(x^k, y^k,\tilde{y}^k)$ is nonempty for each $k=0,1,\ldots$.
\end{assumption}
 
\begin{algorithm}[h]
	\caption{iP-DwCA}\label{DCA2}
	\begin{algorithmic}[1]
		\State Take an initial point $(x^0, y^0) \in C$; an initial penalty parameter $\beta_0>0$; $\delta_\beta > 0$;  $\alpha > 0$; $c_\beta > 0$;  $\gamma \in (0,\frac{1}{\rho_f})$ if $\rho_f>0$ and $\gamma=\infty$ if $\rho_f=0$; $\rho_v \ge  \frac{\rho_f}{1 - \gamma\rho_f}$; $\rho_F\geq 0$; tolerance $tol>0$.
		\For{$k=0,1,\ldots$}{
			\begin{itemize}
				\item[1.]  Solve the proximal lower-level problem  defined in (\ref{LL-iter}) and
				obtain $\tilde{y}^k \in S_\gamma(x^k,y^k)$ and $\eta^k \in \mathcal{M}_\gamma(x^k, y^k,\tilde{y}^k)$.
			Compute $\xi^k_i$, $i=0,1$ according to (\ref{subgvaluefunction}).				
				\item[2.] Obtain an inexact solution $(x^{k+1},y^{k+1})$ of (\ref{DCA2_subproblem}).
				\item[3.] Stopping test. Stop if $\max\{ \|(x^{k+1},y^{k+1}) - (x^{k},y^{k})\|,t^{k+1} \} < tol$.
				\item[4.] Penalty parameter update.
				Set
				\begin{equation*}
					\beta_{k+1} = \left\{
					\begin{aligned}
						&\beta_k + \delta_\beta, \qquad &&\text{if}~\max\{\beta_k, 1/t^{k+1}\} < c_\beta\|(x^{k+1},y^{k+1}) - (x^{k},y^{k})\|^{-1}, \\
						&\beta_k, \qquad &&\text{otherwise}.
					\end{aligned}\right.
				\end{equation*}
				\item[5.] Set $k:=k+1$.
		\end{itemize}}
		\EndFor
	\end{algorithmic}
\end{algorithm}

\section{Theoretical Investigations}
\label{sec:theory}

In this section, we conduct convergence analysis of the proposed iP-DwCA for solving 
DwC-$(VP)_\gamma^\epsilon$  towards ``good” quality solutions, i.e.
KKT points of the approximation problem DwC-${\rm (VP)}_\gamma^\epsilon$. Recall from Section \ref{sec:alg} that iP-DwCA applies iP-DCA to the DC program in ${\rm DwC} - {\rm (VP)}_\gamma^\epsilon$ with $\gamma \in (0, 1/\rho_f)$ and $\epsilon > 0$.
Thus, we first consider convergence analysis of iP-DCA for the general class of DC programs, then apply these results to yield convergence analysis of iP-DwCA.

\subsection{Inexact proximal DC algorithms for standard DC program}
\label{DCalg}

In this part, we recall the iP-DCA proposed in \cite{ye2021difference} for solving the following standard DC program:
\begin{eqnarray*}
	({\rm DC})~~~~~~~\min_{ z\in \Sigma} && f_0(z):=g_0(z)-h_0(z)\\
	s.t. && f_1(z):=g_1(z)-h_1(z)\leq 0,
\end{eqnarray*}
where $\Sigma$ is a closed convex subset of $\mathbb{R}^d$ and $g_0(z),h_0(z),g_1(z),h_1(z):\Sigma \rightarrow \mathbb{R}$ are  convex functions.

Given a current iterate $z^k\in \Sigma$ with  $k=0,1,\ldots$, we select  $\zeta^k_i\in \partial h_i(z^k)$, for $i=0,1$. 
Then we solve the following subproblem approximately and select $z^{k+1}$ as an approximate minimizer:
\begin{equation} \label{subp} 
	\begin{aligned}
		\min_{z \in \Sigma} ~ {\varphi}_k(z) :=\; & g_0(z) - h_0(z^k) -\langle \zeta_0^k, {z-z^k} \rangle \\
		&+\beta_k \max\left\{g_1(z) -h_1(z^k)- \langle\zeta_1^k, z-z^k\rangle, 0\right\} +\frac{\alpha}{2} \left\|z-z^k \right\|^2,
	\end{aligned}
\end{equation}
where $\alpha$ is a given positive constant and $\beta_k $ represents the adaptive penalty parameter.

Choose $z^{k+1}$ as an {\em approximate minimizer} of the convex subproblem (\ref{subp}) satisfying the following criterion
\begin{equation}\label{inexact2}
	\mathrm{dist}\left(0, \partial  {\varphi}_k( z^{k+1}) + \mathcal{N}_\Sigma(z^{k+1}) \right) \le \frac{\sqrt{2}}{2} \alpha \left\|z^k - z^{k-1} \right\|.
\end{equation}
Using  above constructions and	letting $t^{k+1} := \max\{g_1(z^{k+1}) -h_1(z^k)- \langle\zeta_1^k, z^{k+1} - z^k\rangle, 0\}$, we are ready to present the iP-DCA in Algorithm \ref{ipDCA}. 
	\begin{algorithm}[h]
		\caption{iP-DCA}\label{ipDCA}
		\begin{algorithmic}[1]
			\State Take an initial point $z^0\in \Sigma$; an initial penalty parameter $\beta_0>0$; $\delta_\beta > 0$;  $\alpha > 0$; $c_\beta > 0$; tolerance $tol>0$.
			\For{$k=0,1,\ldots$}{
				\begin{itemize}
					\item[1.] Compute $\zeta^k_i\in \partial h_i(z^k)$, $i=0,1$.
					\item[2.] Obtain an inexact solution $z^{k+1}$ of (\ref{subp}) satisfying  \eqref{inexact2}.
					\item[3.] Stopping test. Stop if $\max \{ \|z^{k+1}-z^k\|, t^{k+1}\} <tol$.
					\item[4.] Penalty parameter update.
					Set
					\begin{equation*}
						\beta_{k+1} = \left\{
						\begin{aligned}
							&\beta_k + \delta_\beta, \qquad &&\text{if}~\max\{\beta_k, 1/t^{k+1}\} < c_\beta\|z^{k+1}-z^k\|^{-1}, \\
							&\beta_k, \qquad &&\text{otherwise}.
						\end{aligned}\right.
					\end{equation*}
					\item[5.] Set $k:=k+1$.
			\end{itemize}}
			\EndFor
		\end{algorithmic}
	\end{algorithm}

The convergence analysis for iP-DCA is similar to that in \citet{ye2021difference,pmlr-v162-gao22j}, where a more specific setting was considered. 
Similar to \citet{ye2021difference}, we start by providing some solution quality characterizations for problem (DC).

	\begin{definition}\label{Defn3.1} Let $\bar z$ be a feasible solution of problem (DC). We say that $\bar z$ is a stationary/KKT point of problem (DC) if there exists a multiplier $\lambda\geq 0$ such that 
		\begin{eqnarray*} 
			&& 0\in \partial g_0(\bar z) -\partial h_0(\bar z) +\lambda( \partial g_1(\bar z) -\partial h_1(\bar z))+\mathcal{N}_\Sigma(\bar z),\\
			&& (g_1(\bar z)-h_1(\bar z))\lambda=0.
		\end{eqnarray*}
	\end{definition}
	
	\begin{definition}\label{Defn3.2} Let $\bar z$ be a feasible point of problem (DC). We say that the nonzero abnormal multiplier constraint qualification (NNAMCQ) holds at $\bar z$ for problem (DC) if either $f_1(\bar z) <0$ or $f_1(\bar z)= 0$ but 
		\begin{equation}\label{NNAMCQ}
			0 \not \in  \partial g_1(\bar{z}) -\partial h_1(\bar{z}) + \mathcal{N}_\Sigma(\bar{z}).
		\end{equation}
		Let $\bar z\in \Sigma$, we say that the extended no nonzero abnormal multiplier constraint qualification (ENNAMCQ) holds at $\bar z$ for problem (DC) if either $f_1(\bar z) <0$ or $f_1(\bar z)\geq 0$ but
		(\ref{NNAMCQ}) holds.
	\end{definition}
	
	Note that NNAMCQ (ENNAMCQ) is equivalent to  MFCQ  (EMFCQ) respectively; see e.g., \cite{Jourani}.
 We recall following optimality condition for problem (DC) from \cite{ye2021difference}.
	\begin{proposition}\label{Thm3.1} Let $\bar z$ be a local solution of problem (DC). If NNAMCQ holds at $\bar z$ and all functions $g_0,g_1,h_0,h_1$ are Lipschitz around point $\bar z$, then $\bar z$ is a KKT point of problem (DC).
	\end{proposition}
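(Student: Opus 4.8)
The plan is to recover the KKT system of Definition~\ref{Defn3.1} in two moves: first write down a Fritz--John type necessary condition at the local solution $\bar z$, and then invoke NNAMCQ to eliminate the degenerate (abnormal) multiplier. Since $\bar z$ is a local minimizer of $f_0$ over the feasible set $\{z\in\Sigma: f_1(z)\le 0\}$ and every defining function is Lipschitz near $\bar z$, the nonsmooth multiplier rule applies: there exist $\lambda_0\ge 0$ and $\lambda\ge 0$, not both zero, with
\begin{equation*}
0 \in \lambda_0\,\partial^\circ f_0(\bar z) + \lambda\,\partial^\circ f_1(\bar z) + \mathcal{N}_\Sigma(\bar z), \qquad \lambda f_1(\bar z)=0,
\end{equation*}
where $\partial^\circ$ denotes the Clarke subdifferential. (Because $\bar z$ is feasible and all functions are Lipschitz, this is the classical Fritz--John condition for Lipschitz programs; the sharper limiting-subdifferential version may equally be used.)

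Second, I would convert this inclusion into the DC form of Definition~\ref{Defn3.1}. Each $f_i=g_i-h_i$ is a difference of convex (hence Lipschitz) functions, so the Clarke sum rule together with $\partial^\circ(-h_i)(\bar z)=-\partial h_i(\bar z)$ and $\partial^\circ g_i(\bar z)=\partial g_i(\bar z)$ (convexity) gives the inclusion $\partial^\circ f_i(\bar z)\subseteq \partial g_i(\bar z)-\partial h_i(\bar z)$. Substituting yields
\begin{equation*}
0 \in \lambda_0\bigl(\partial g_0(\bar z)-\partial h_0(\bar z)\bigr) + \lambda\bigl(\partial g_1(\bar z)-\partial h_1(\bar z)\bigr) + \mathcal{N}_\Sigma(\bar z), \qquad \lambda f_1(\bar z)=0.
\end{equation*}

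Third, I would use NNAMCQ to force $\lambda_0>0$. Suppose instead $\lambda_0=0$; then $\lambda>0$, and dividing by $\lambda$ (using that $\mathcal{N}_\Sigma(\bar z)$ is a cone) produces $0\in \partial g_1(\bar z)-\partial h_1(\bar z)+\mathcal{N}_\Sigma(\bar z)$, i.e.\ exactly \eqref{NNAMCQ}. Moreover complementarity $\lambda f_1(\bar z)=0$ with $\lambda>0$ forces $f_1(\bar z)=0$. This is precisely the configuration excluded by NNAMCQ (Definition~\ref{Defn3.2}), a contradiction. Hence $\lambda_0>0$; setting $\tilde\lambda:=\lambda/\lambda_0\ge 0$ and dividing the inclusion by $\lambda_0$ delivers the stationarity relation and the complementarity $\tilde\lambda(g_1(\bar z)-h_1(\bar z))=0$ of Definition~\ref{Defn3.1}, so $\bar z$ is a KKT point.

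The main obstacle is bookkeeping at the level of nonsmooth calculus rather than the logical skeleton: one must invoke the Fritz--John rule in a form whose multiplier inclusion is compatible with the DC decomposition, and state the sum-rule inclusion $\partial^\circ f_i\subseteq \partial g_i-\partial h_i$ for the correct subdifferential (Clarke, where the sum rule is an inclusion for general Lipschitz functions and an equality for convex ones). One must also check that the normal-cone term survives the scaling by $1/\lambda_0$, which is immediate since $\mathcal{N}_\Sigma(\bar z)$ is a convex cone. The remainder is the routine normalization that turns a Fritz--John multiplier pair into a genuine KKT multiplier once abnormality has been ruled out.
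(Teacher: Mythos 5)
Your proof is correct and follows essentially the same route as the paper's source for this result (\citealt{ye2021difference}, to which the paper defers): apply the nonsmooth Fritz--John multiplier rule at the local minimizer, pass to the DC form via $\partial^\circ(g_i-h_i)(\bar z)\subseteq \partial g_i(\bar z)-\partial h_i(\bar z)$ using convexity, and invoke NNAMCQ together with complementarity to exclude the abnormal case $\lambda_0=0$ before normalizing. No gaps; the bookkeeping points you flag (cone scaling, Clarke sum rule as an inclusion) are handled correctly.
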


    The subsequential convergence of iP-DCA is established in \cite[Theorem 1]{ ye2021difference}.
		We will proceed to show that the subsequential convergence can be further enhanced to sequential convergence under the Kurdyka-\L{}ojasiewicz(KL) property \citep{attouch2009convergence,attouch2010proximal,attouch2013convergence,bolte2014proximal}. 
  
  Let $\eta \in [0, +\infty]$ and $\Phi_\eta$ denote the class of all concave and continuous functions $\phi: [0,\eta) \rightarrow [0, +\infty)$ satisfying the conditions: (a) $\phi(0) = 0$, (b) $\phi$ is $C^1$ on $(0,\eta)$ and continuous at $0$, (c) $\phi'(s) > 0$ for all $s \in (0,\eta)$.
		\begin{definition}[Kurdyka-\L{}ojasiewicz property]\label{KL}
			Let $\sigma : \mathbb{R}^d \rightarrow (-\infty, +\infty]$	be proper and lower semicontinuous.
			The function $\sigma$ is said to have the Kurdyka-\L{}ojasiewicz (KL) property
			at $\bar{z} \in \mathrm{dom}\, \partial \sigma := \{ z \in \mathbb{R}^d\ ~|~  \partial \sigma(z) \neq \varnothing \}	$,	
			if there exist $\eta \in (0,+\infty]$, a neighborhood $\mathcal{Z}$ of $\bar{z}$ and a function $\phi \in \Phi_\eta$, such that for all
			$z \in \mathcal{Z} \cap \{ z \in \mathbb{R}^d\ ~|~  \sigma(\bar{z}) < \sigma(z) <\sigma(\bar{z}) +\eta \}$,
			the following inequality holds\vspace{-2pt}
			\[
			\phi'(\sigma(z) - \sigma(\bar{z})) \mathrm{dist}(0, \partial \sigma(z)) \ge 1.\vspace{-2pt}
			\]
			If $\sigma$ satisfy the KL property at each point of $\mathrm{dom}\, \partial \sigma$ then $\sigma$ is called a KL function.
			
		\end{definition}
		
		In addition, given the KL property around any points in a compact set, the uniformized KL property holds; see Lemma 6 in \citet{bolte2014proximal}.
		\begin{lemma}[Uniformized KL property] \label{uniformKL}
			Given a compact set $D$ and a proper and lower semicontinuous function $\sigma : \mathbb{R}^d \rightarrow (-\infty, +\infty]$, suppose
			that $\sigma$ is constant on $D$ and satisfies the KL property at each point of $D$. Then,
			there exist $\epsilon, \eta$ and $\phi \in \Phi_\eta$ such that for all $\bar{z} \in D$ and	$z \in\{ z \in \mathbb{R}^d\ ~|~  \mathrm{dist}(z, D) < \epsilon, ~~ \sigma(\bar{z}) < \sigma(z) <\sigma(\bar{z}) +\eta \}$, it holds \vspace{-2pt}
			\[
			\phi'(\sigma(z) - \sigma(\bar{z})) \mathrm{dist}(0, \partial \sigma(z)) \ge 1.
			\]
		\end{lemma}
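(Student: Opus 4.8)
The plan is to prove the lemma by a routine compactness argument, the only genuinely useful input being that $\sigma$ is \emph{constant} on $D$. Write $\sigma^{*}$ for this common value, so that $\sigma(\bar z)=\sigma^{*}$ for every $\bar z\in D$. First I would invoke the pointwise KL property at each $\bar z\in D$: this furnishes a height $\eta_{\bar z}\in(0,+\infty]$, an open neighborhood of $\bar z$ that I may shrink to an open ball $B(\bar z,2\rho_{\bar z})$, and a desingularizing function $\phi_{\bar z}\in\Phi_{\eta_{\bar z}}$, such that $\phi_{\bar z}'(\sigma(z)-\sigma^{*})\,\mathrm{dist}(0,\partial\sigma(z))\ge 1$ for all $z$ in $B(\bar z,2\rho_{\bar z})$ with $\sigma^{*}<\sigma(z)<\sigma^{*}+\eta_{\bar z}$. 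The point of constancy is that every center contributes the \emph{same} lower threshold $\sigma^{*}$, so the bands $(\sigma^{*},\sigma^{*}+\eta_{\bar z})$ are nested rather than shifted, which is precisely what allows them to be merged.

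Next I would extract a finite subcover. The balls $\{B(\bar z,\rho_{\bar z})\}_{\bar z\in D}$ form an open cover of the compact set $D$, so finitely many of them, say $B(\bar z_{i},\rho_{i})$ for $i=1,\dots,p$, already cover $D$. Set $\epsilon:=\min_{1\le i\le p}\rho_{i}>0$ and $\eta:=\min_{1\le i\le p}\eta_{i}$. A triangle-inequality computation then shows that the whole tube is absorbed into the finite family: if $\mathrm{dist}(z,D)<\epsilon$, pick $\bar w\in D$ with $\|z-\bar w\|<\epsilon$ and an index $i$ with $\|\bar w-\bar z_{i}\|<\rho_{i}$, whence $\|z-\bar z_{i}\|<\epsilon+\rho_{i}\le 2\rho_{i}$, i.e.\ $z\in B(\bar z_{i},2\rho_{i})$. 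Moreover, when $\sigma^{*}<\sigma(z)<\sigma^{*}+\eta$ we automatically have $\sigma(\bar z_{i})<\sigma(z)<\sigma(\bar z_{i})+\eta_{i}$, since $\eta\le\eta_{i}$ and $\sigma(\bar z_{i})=\sigma^{*}$; hence the $i$-th pointwise KL inequality applies to $z$.

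Finally I would glue the desingularizing functions into a single one by \emph{summation}: define $\phi:=\sum_{i=1}^{p}\phi_{i}$ on $[0,\eta)$, where each $\phi_{i}$ is defined since $\eta\le\eta_{i}$. As a finite sum of functions in the respective $\Phi_{\eta_{i}}$, $\phi$ is concave, continuous, satisfies $\phi(0)=0$, and is $C^{1}$ on $(0,\eta)$ with $\phi'(s)=\sum_{i}\phi_{i}'(s)>0$, so $\phi\in\Phi_{\eta}$; crucially $\phi'(s)\ge\phi_{i}'(s)$ for every $i$ and every $s\in(0,\eta)$, since all summands are positive. Combining the three ingredients, for $z$ in the tube with $\sigma^{*}<\sigma(z)<\sigma^{*}+\eta$ and the index $i$ supplied by the covering step, $\phi'(\sigma(z)-\sigma^{*})\,\mathrm{dist}(0,\partial\sigma(z))\ge\phi_{i}'(\sigma(z)-\sigma^{*})\,\mathrm{dist}(0,\partial\sigma(z))\ge 1$, which is exactly the claimed uniform inequality.

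I expect the main obstacle to be the construction of a single desingularizing function $\phi$ that simultaneously dominates all $p$ of the $\phi_{i}$ while still belonging to $\Phi_{\eta}$; the summation trick resolves this cleanly, but one must verify membership in $\Phi_{\eta}$ (concavity, the normalization $\phi(0)=0$, and strict positivity of $\phi'$) and reconcile the differing heights $\eta_{i}$ by passing to their minimum. The covering/tube argument is comparatively routine, hinging only on shrinking each KL neighborhood to a ball of half-radius before extracting the finite subcover.
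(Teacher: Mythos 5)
Your proposal is correct, and it is essentially the standard argument: the paper does not prove this lemma itself but quotes it as Lemma 6 of \citet{bolte2014proximal}, and your proof (finite subcover by half-radius balls, merging the value bands via constancy of $\sigma$ on $D$, and gluing the desingularizing functions by summation so that $\phi' \ge \phi_i'$) is precisely the proof given in that reference. All the verifications you flag as potential obstacles (membership of $\sum_i \phi_i$ in $\Phi_\eta$ and the triangle-inequality absorption into the doubled balls) go through exactly as you describe.
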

		
		The idea for conducting the sequential convergence analysis is identical to that in \cite{pmlr-v162-gao22j}. Initially, we define the merit function for (DC), which is inspired by \citet{liu2019refined}. The merit function is defined as:
		\begin{equation}\label{meritf}
		{E}_{\beta}(z,z_0, \xi):= {g}_0(z) - \langle \xi_0,z \rangle + {h}_0^*(\xi_0) + \delta_{\Sigma}(z) + \beta \max  \{{g}_1(z) - \langle \xi_1, z \rangle + {h}_1^*(\xi_1), 0\} + \frac{\alpha}{4} \|z-z_0\|^2,
		\end{equation}
		where $h^*$ denotes the conjugate function of $h$, that is, $h^*(\xi) := \sup_z \{\langle \xi,z\rangle - h(z) \}$.

		To analyze sequential convergence, we first establish the sufficient decrease property and the relative error condition of the merit function $E_{\beta}(z,z_0, \xi)$ defined as in (\ref{meritf}), which are summarized in the following lemma. Although the proof of the following lemma closely resembles \citet[Lemma 3.4]{pmlr-v162-gao22j}, we present it in Appendix \ref{appendix_A} for the sake of completeness.
		\begin{lemma}\label{suff_decreasenew}
			Let $\{z^k\}$ be iterates generated by iP-DCA, then	$z^k$ satisfies 
			\begin{equation} 
				E_{\beta_k}(z^{k+1},z^{k},\xi^{k}) + \frac{\alpha}{4} \|z^{k+1} - z^k\|^2 \le E_{\beta_k}(z^k,z^{k-1},\xi^{k-1}),  \label{suff_decrease_eq2} \quad \text{and}
			\end{equation}
			\begin{equation}\label{re_err_eq}
				\mathrm{dist} \left( 0, \partial E_{\beta_k}(z^{k+1},z^{k},\xi^{k}) \right)  \le \,  \frac{\sqrt{2}}{2} \alpha\|z^k - z^{k-1}\| +  (\beta_k+\alpha+1) \|z^{k+1} - z^k\|.
			\end{equation}
		\end{lemma}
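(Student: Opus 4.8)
The plan is to establish the two estimates \eqref{suff_decrease_eq2} and \eqref{re_err_eq} by exploiting the structure of the subproblem \eqref{subp} together with the conjugate-function identities hidden in the merit function \eqref{meritf}. The crucial observation is that for a convex function $h$ and any $\zeta \in \partial h(z^k)$, the Fenchel--Young equality $h^*(\zeta) = \langle \zeta, z^k\rangle - h(z^k)$ holds, so that the linearizations appearing in $\varphi_k$ (namely $h_i(z^k) + \langle \zeta_i^k, z - z^k\rangle$) are exactly $\langle \zeta_i^k, z\rangle - h_i^*(\zeta_i^k)$. This is what lets us rewrite the subproblem objective $\varphi_k(z)$ in terms of the merit function. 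First I would verify the algebraic identity
\[
\varphi_k(z) \;=\; E_{\beta_k}(z, z^k, \xi^k) \;-\; \frac{\alpha}{4}\|z - z^k\|^2 \;+\; \frac{\alpha}{2}\|z - z^k\|^2,
\]
valid on $\Sigma$ when $\xi^k = \zeta^k$, which rearranges to $\varphi_k(z) + \delta_\Sigma(z) = E_{\beta_k}(z, z^k, \xi^k) + \tfrac{\alpha}{4}\|z - z^k\|^2$; here I have absorbed the indicator $\delta_\Sigma$ into the merit function and used the conjugate identities for $h_0$ and $h_1$.

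For the \textbf{sufficient decrease} inequality \eqref{suff_decrease_eq2}, I would proceed in two moves. Since $z^{k+1}$ is (approximately) a minimizer of the $\alpha$-strongly convex function $\varphi_k + \delta_\Sigma$, the strong convexity inequality gives
\[
\varphi_k(z^{k+1}) + \delta_\Sigma(z^{k+1}) + \frac{\alpha}{2}\|z^{k+1} - z^k\|^2 \;\le\; \varphi_k(z^k) + \delta_\Sigma(z^k),
\]
after accounting for the inexactness tolerance \eqref{inexact2}. Rewriting both sides through the identity above turns this into a comparison between $E_{\beta_k}(z^{k+1}, z^k, \xi^k)$ and $E_{\beta_k}(z^k, z^{k-1}, \xi^{k-1})$; the only subtlety is matching the anchor point and multiplier indices, since the right-hand side of \eqref{suff_decrease_eq2} uses the \emph{previous} iterate data $(z^{k-1}, \xi^{k-1})$. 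I would bridge this by noting that $E_{\beta_k}(z^k, z^k, \xi^k) \le E_{\beta_k}(z^k, z^{k-1}, \xi^{k-1})$ up to the proximal gap, using that $\xi^k \in \partial h(z^k)$ makes the conjugate terms attain equality at $z^k$ so that the merit function evaluated with its own anchor is minimal over admissible subgradients. Care with the inexactness term is needed so that the $\tfrac{\sqrt{2}}{2}\alpha\|z^k - z^{k-1}\|$ bound from \eqref{inexact2} is absorbed into the $\tfrac{\alpha}{4}\|z^{k+1}-z^k\|^2$ slack via Young's inequality.

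For the \textbf{relative error} bound \eqref{re_err_eq}, I would compute a subgradient of $E_{\beta_k}$ at the point $(z^{k+1}, z^k, \xi^k)$ directly from its definition. The subdifferential splits across the $g_0$-term, the indicator $\delta_\Sigma$, the penalized max-term, and the quadratic $\tfrac{\alpha}{4}\|z - z_0\|^2$; comparing this with the optimality residual of $\varphi_k$ controlled by \eqref{inexact2}, the difference consists of correction terms coming from the quadratic proximal center (contributing a multiple of $\alpha\|z^{k+1} - z^k\|$) and from the change in the linearization subgradients between consecutive steps (contributing the $(\beta_k + 1)\|z^{k+1} - z^k\|$ piece, since the penalized term's subgradient differs from the linearized one by a bounded amount times $\beta_k$). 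Collecting these yields the stated coefficient $(\beta_k + \alpha + 1)$ on $\|z^{k+1}-z^k\|$ plus the inherited $\tfrac{\sqrt{2}}{2}\alpha\|z^k - z^{k-1}\|$ term.

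The main obstacle I anticipate is the bookkeeping in the sufficient-decrease step: reconciling the merit function evaluated with the current anchor/multiplier $(z^k, \xi^k)$ against the version with the previous data $(z^{k-1}, \xi^{k-1})$ on the right-hand side. This hinges on the Fenchel--Young equality forcing $\langle \xi_i^k, z^k\rangle - h_i^*(\xi_i^k) = h_i(z^k)$ exactly when $\xi_i^k \in \partial h_i(z^k)$, so that $E$ with matched anchor is a valid lower bound; threading this together with the inexactness tolerance \eqref{inexact2} without losing the clean $\tfrac{\alpha}{4}$ coefficient is the delicate part. Since the statement explicitly references \citet[Lemma 3.4]{pmlr-v162-gao22j}, I would follow that template, adapting only the places where the general convex $g_i, h_i$ replace the problem-specific functions used there.
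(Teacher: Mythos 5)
Your proposal is correct and takes essentially the same route as the paper's proof in Appendix~\ref{appendix_A}: the Fenchel--Young equality to identify $\varphi_k(\cdot)+\delta_\Sigma(\cdot)$ with $E_{\beta_k}(\cdot,z^k,\xi^k)+\tfrac{\alpha}{4}\|\cdot-z^k\|^2$, the Fenchel--Young inequality to bound $\varphi_k(z^k)$ by $E_{\beta_k}(z^k,z^{k-1},\xi^{k-1})-\tfrac{\alpha}{4}\|z^k-z^{k-1}\|^2$, strong convexity plus Young's inequality to absorb the inexactness tolerance \eqref{inexact2}, and a direct computation of the full subdifferential of $E_{\beta_k}$ in all variables $(z,z_0,\xi)$ compared against the optimality residual of $\varphi_k$ for \eqref{re_err_eq}. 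One attribution slip worth noting (it does not affect validity): the $(\beta_k+1)\|z^{k+1}-z^k\|$ contribution comes from the $\xi_0$- and $\xi_1$-components of $\partial E_{\beta_k}$ via $z^k\in\partial h_i^*(\xi_i^k)$, not from any discrepancy between the max-term's $z$-subgradients --- those coincide exactly, since by Fenchel--Young equality the linearized max-term $f_{1k}(\cdot)$ and the conjugate-form max-term $\hat f_1(\cdot,\xi^k)$ are identical as functions of $z$.
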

  With the the sufficient decrease property and the relative error condition of the merit function $E_{\beta}(z,z_0, \xi)$ established above, we can demonstrate the sequential convergence of iP-DCA under the KL property. 
  
  We now summarize the convergence property in the following theorem. The subsequential convergence result is due to \cite[Theorem 1]{ ye2021difference} and  \cite[Proposition 3]{ ye2021difference} demonstrated that  ENNAMCQ is a sufficient condition for the boundedness of the adaptive penalty parameters sequence $\{\beta_k\}$.  The
 sequential convergence is new but the proof closely follows \citet[Theorem 3.3]{pmlr-v162-gao22j}. For the sake of completeness, we present it in Appendix \ref{appendix_B}.
		
		\begin{theorem}\label{con_KL_alg1}
			Suppose $f_0$ is bounded below on $\Sigma$ and functions $g_0$, $g_1$, $h_0$, $h_1$ are locally Lipschitz on  $\Sigma$. If the  sequences $\{z^k\}$ and $\{\beta_k\}$ generated by   iP-DCA are bounded, then every accumulatoin point of $\{z^k\}$ is a KKT point of problem (DC). Moreover if ENNAMCQ holds at any accumulation points of $\{z^k\}$, then the sequence
$\{\beta_k\}$ must be bounded. Furthermore suppose  and the merit function $E_\beta$ is a KL function. Then the sequence $\{z^k\}$ converges to a KKT point of problem (DC). Moreover the boundedness of the sequence  $\{\beta_k\}$ can be ensured if ENNAMCQ holds at the limiting point.
		\end{theorem}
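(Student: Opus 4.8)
The plan is to assemble the four assertions from previously established pieces, treating the subsequential statements as known and concentrating the effort on the new sequential convergence under the KL property. For the claim that every accumulation point of $\{z^k\}$ is a KKT point of (DC), I would invoke the subsequential convergence result \cite[Theorem 1]{ye2021difference}, which applies verbatim under the stated boundedness and local Lipschitz hypotheses. The claim that ENNAMCQ at every accumulation point forces $\{\beta_k\}$ to be bounded is exactly \cite[Proposition 3]{ye2021difference}: the idea there is that if $\beta_k \to \infty$, then the update rule forces the constraint residual $t^{k+1}$ to decay faster than the step, and a limiting argument extracts a nonzero abnormal multiplier at some accumulation point, contradicting ENNAMCQ.

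The heart of the proof is the sequential convergence, which I would carry out in an Attouch--Bolte--Svaiter style. First, since $\{\beta_k\}$ is bounded, nondecreasing, and increases only in increments of $\delta_\beta$, it must stabilize at some value $\bar\beta$ for all $k \ge K$. From that index on the merit function $E_{\bar\beta}$ in \eqref{meritf} is fixed, and Lemma \ref{suff_decreasenew} supplies, in the extended variable $u^k := (z^{k+1}, z^k, \xi^k)$, both the sufficient decrease $E_{\bar\beta}(u^k) + \tfrac{\alpha}{4}\|z^{k+1}-z^k\|^2 \le E_{\bar\beta}(u^{k-1})$ and the relative error $\mathrm{dist}(0, \partial E_{\bar\beta}(u^k)) \le \tfrac{\sqrt2}{2}\alpha\|z^k - z^{k-1}\| + (\bar\beta+\alpha+1)\|z^{k+1}-z^k\|$. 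The decrease makes $\{E_{\bar\beta}(u^k)\}$ nonincreasing; by Fenchel--Young the primal--dual terms in \eqref{meritf} reassemble $f_0(z)$ from below, so that, $f_0$ being bounded below on $\Sigma$, the merit values are bounded below and converge to some $E^*$, while summation yields $\sum_k \|z^{k+1}-z^k\|^2 < \infty$ and hence $\|z^{k+1}-z^k\| \to 0$.

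Next I would pass to the limit set $\Omega$ of the bounded extended sequence $\{u^k\}$; boundedness of $\{\xi^k\}$ holds because $\xi_i^k \in \partial h_i(z^k)$ with $h_i$ locally Lipschitz on the bounded region containing the iterates. Using the Fenchel equality $h_i^*(\xi_i^k) = \langle \xi_i^k, z^k\rangle - h_i(z^k)$ along any convergent subsequence, together with closedness of the subdifferential graphs of the convex $h_i$, shows that $E_{\bar\beta}$ is constant equal to $E^*$ on the compact set $\Omega$; the uniformized KL property (Lemma \ref{uniformKL}) then applies on a neighborhood of $\Omega$, into which the iterates eventually fall since $\mathrm{dist}(u^k,\Omega)\to 0$ and $E_{\bar\beta}(u^k)\downarrow E^*$. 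Writing $r_k := E_{\bar\beta}(u^k) - E^*$ and $\Delta_k := \|z^{k+1}-z^k\|$, concavity of $\phi$ combined with the sufficient decrease gives $\phi(r_{k-1}) - \phi(r_k) \ge \phi'(r_{k-1})\,\tfrac{\alpha}{4}\Delta_k^2$, while the KL inequality and the relative error bound together bound $\phi'(r_{k-1})$ below by the reciprocal of $\tfrac{\sqrt2}{2}\alpha\Delta_{k-2}+(\bar\beta+\alpha+1)\Delta_{k-1}$. Chaining these produces $\Delta_k^2 \le C\,(\phi(r_{k-1})-\phi(r_k))\,(\Delta_{k-2}+\Delta_{k-1})$ for a constant $C$; applying $2\sqrt{ab}\le a/c + cb$ with a suitably small $c$ and telescoping the $\phi(r_{k-1})-\phi(r_k)$ terms establishes $\sum_k \Delta_k < \infty$. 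Thus $\{z^k\}$ is Cauchy and converges to a single point, which by the first claim is a KKT point of (DC); and since that point is then the unique accumulation point, ENNAMCQ there yields boundedness of $\{\beta_k\}$ through the second claim.

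The main obstacle will be the two-step dependence in the relative error bound: $\mathrm{dist}(0,\partial E_{\bar\beta}(u^k))$ is controlled by \emph{both} $\Delta_{k-1}$ and $\Delta_k$, rather than by a single consecutive difference, so the naive KL telescoping does not close. This is exactly why the merit function carries the auxiliary memory variable $z_0$ and the quadratic $\tfrac{\alpha}{4}\|z-z_0\|^2$; the resolution is a weighted arithmetic--geometric-mean split that absorbs the $\Delta_{k-2},\Delta_{k-1}$ contributions into a summable telescoping series, following \cite[Theorem 3.3]{pmlr-v162-gao22j}. A secondary technical point is verifying that $E_{\bar\beta}$ is genuinely constant on $\Omega$ even though the conjugates $h_i^*$ are only lower semicontinuous, which is handled via the convergence of the primal--dual pairs noted above.
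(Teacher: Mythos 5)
Your proposal is correct and takes essentially the same approach as the paper's proof in Appendix \ref{appendix_B}: both delegate the subsequential claims to Theorem 1 and Proposition 3 of \citet{ye2021difference}, stabilize the penalty at $\bar\beta$, and then run the Attouch--Bolte--Svaiter argument on the merit function $E_{\bar\beta}$ using Lemma \ref{suff_decreasenew}, the uniformized KL property (Lemma \ref{uniformKL}), and the same square-root/arithmetic--geometric-mean split to absorb the two-step relative error bound before telescoping to $\sum_k \|z^{k+1}-z^k\| < \infty$. The only difference is cosmetic: you spell out (via Fenchel--Young and closedness of the graphs of $\partial h_i$) two points the paper treats briefly, namely the lower bound on $E_{\bar\beta}$ and its constancy on the limit set $\Omega$.
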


\subsection{Convergence analysis of iP-DwCA}\label{sec:con}

It is clear that DwC-$(VP)_\gamma^\epsilon$ is a special case of problem (DC) with	 \begin{equation}\label{setting}
\begin{aligned}
&z=(x,y), \ \Sigma =C,\\
	&{g}_0(z) :=F_1(z) + \frac{\rho_F}{2}\|z\|^2, \\
	&{h}_0(z):=F_2(z) + \frac{\rho_F}{2}\|z\|^2, \\
	& {g}_1(z):= f(z)+ \frac{\rho_v}{2}\|z\|^2, \\
	& {h}_1(z):= v_\gamma(z) + \frac{\rho_v}{2}\|z\|^2-\epsilon.
\end{aligned}
\end{equation}

Furthermore, we can confirm by comparing the steps of Algorithm \ref{DCA2} to Algorithm \ref{ipDCA} that iP-DwCA (Algorithm \ref{DCA2}) applies iP-DCA (Algorithm \ref{ipDCA}) to solve  DwC-${\rm (VP)}_\gamma^\epsilon$. 

Combining Definition \ref{Defn3.1} and (\ref{wc_subdiff_sum}), we now define the concept of a stationary point for problem DwC-${\rm (VP)}_\gamma^\epsilon$ which can be seen as a candidate for the optimal solutions of the problem.
\begin{definition}
	We say a  point $(\bar{x},\bar{y})$ is a stationary/KKT point of  {\rm DwC}-${\rm (VP)}_\gamma^\epsilon$ with $\epsilon \geq  0$ if there exists $\lambda\geq 0$ such that
	\begin{equation*}\label{ClarkKKT}
		\left\{ \quad \begin{aligned}
			&0 \in \partial F_1(\bar{x},\bar{y}) -\partial F_2(\bar{x},\bar{y})+ \lambda \partial f(\bar{x},\bar{y}) - \lambda \partial v_\gamma(\bar{x}, \bar{y})+ \mathcal{N}_C(\bar{x},\bar{y}), \\
			& f(\bar{x},\bar{y}) - v_\gamma(\bar{x}, \bar{y}) - \epsilon \le 0, \quad  {\lambda} \left(f(\bar{x},\bar{y}) - v_\gamma(\bar{x}, \bar{y}) - \epsilon  \right) = 0.
		\end{aligned}\right.
	\end{equation*}
\end{definition}
Combining Definition \ref{Defn3.2} and (\ref{wc_subdiff_sum}), we define the following constraint qualifications.
			\begin{definition}
				Let $(\bar{x},\bar{y})$ be a feasible solution to ${\rm (VP)}_\gamma$.
				 We say that the nonzero abnormal multiplier constraint qualification (NNAMCQ) holds at $(\bar{x},\bar{y})$  if
				\begin{equation}\label{newNNAMCQ}
					0 \notin \partial f(\bar{x},\bar{y}) - \partial v_{\gamma}(\bar{x},\bar{y}) + \mathcal{N}_C(\bar{x},\bar{y}).
				\end{equation}
				Let $(\bar{x},\bar{y}) \in C$, we say that the extended nonzero abnormal multiplier constraint qualification (ENNAMCQ) holds at $(\bar{x},\bar{y})$ for problem ${\rm (VP)}_\gamma^\epsilon$ if either $f(\bar x,\bar y) - v_{\gamma}(\bar{x},\bar{y}) <\epsilon$ or $f(\bar x,\bar y) - v_{\gamma}(\bar{x},\bar{y}) \ge \epsilon$ but (\ref{newNNAMCQ}) holds.
			\end{definition}
						
			By virtue of Theorems \ref{Thm2} and \ref{Thm3} and Proposition \ref{Thm3.1}, we have the following necessary optimality condition. 
			Since the issue of constraint qualifications for problem ${\rm (VP)}_\gamma$ is complicated and it is not the main concern in this paper, we refer the reader to discussions on this topic in \cite{KuangYe,Yebook}.

			\begin{theorem}\label{convergence1} Assume  Assumptions \ref{a1}--\ref{a4} hold and $\gamma \in (0, 1/\rho_f)$.
				Let $(\bar{x},\bar{y})$ be a local optimal solution to  problem {\rm DwC}-${\rm (VP)}_\gamma^\epsilon$ with $\epsilon \geq  0$. 
				Suppose 
				that $F_1$, $F_2$ and $f$ are all  Lipschitz continuous at $(\bar{x},\bar{y})$, 
				either $\epsilon >  0$ and  NNAMCQ holds or $\epsilon = 0$ and certain constraint qualification holds. Then $(\bar{x},\bar{y})$ is a KKT point of problem  {\rm DwC}-${\rm (VP)}_\gamma^\epsilon$.
			\end{theorem}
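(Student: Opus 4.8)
The plan is to reduce DwC-${\rm (VP)}_\gamma^\epsilon$ to the standard DC program (DC) via the identification \eqref{setting} and then invoke the necessary optimality condition of Proposition \ref{Thm3.1}. First I would confirm that, with $z=(x,y)$, $\Sigma = C$, and the data $g_0,h_0,g_1,h_1$ of \eqref{setting}, all four functions are genuinely convex and $\Sigma$ is closed and convex: $g_0,h_0$ are convex because $F_1,F_2$ are $\rho_F$-weakly convex (Assumption \ref{a4}); $g_1 = f + \frac{\rho_v}{2}\|\cdot\|^2$ is convex because $f$ is $\rho_f$-weakly convex and $\rho_v \ge \rho_f$; and $h_1 = v_\gamma + \frac{\rho_v}{2}\|\cdot\|^2 - \epsilon$ is convex because $v_\gamma$ is $\rho_v$-weakly convex by Theorem \ref{Thm2} (using $\gamma \in (0,1/\rho_f)$). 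This places the problem squarely inside the class (DC).

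Next I would verify the two hypotheses of Proposition \ref{Thm3.1} at $\bar z = (\bar x,\bar y)$. The local Lipschitz requirement on $g_0,h_0,g_1$ follows from the assumed Lipschitz continuity of $F_1,F_2,f$ at $(\bar x,\bar y)$, since adding the smooth quadratic $\frac{\rho_F}{2}\|\cdot\|^2$ or $\frac{\rho_v}{2}\|\cdot\|^2$ preserves local Lipschitzness; the Lipschitz continuity of $h_1$ is exactly the local Lipschitz continuity of $v_\gamma$ around $(\bar x,\bar y)$ guaranteed by Theorem \ref{Thm3} under Assumptions \ref{a1} and \ref{a3}. For the constraint qualification I would show that the NNAMCQ of Definition \ref{Defn3.2} for (DC) coincides with the NNAMCQ \eqref{newNNAMCQ} for ${\rm (VP)}_\gamma$: by the weakly convex sum rule \eqref{wc_subdiff_sum}, $\partial g_1(\bar z) = \partial f(\bar z) + \rho_v \bar z$ and $\partial h_1(\bar z) = \partial v_\gamma(\bar z) + \rho_v \bar z$, so $\partial g_1(\bar z) - \partial h_1(\bar z) = \partial f(\bar z) - \partial v_\gamma(\bar z)$, and the cancellation of the $\rho_v \bar z$ terms turns $0 \notin \partial g_1(\bar z) - \partial h_1(\bar z) + \mathcal{N}_C(\bar z)$ into precisely \eqref{newNNAMCQ}. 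Thus when $\epsilon > 0$ the assumed NNAMCQ supplies the hypothesis of Proposition \ref{Thm3.1}; when $\epsilon = 0$, where NNAMCQ is known to fail, I would instead invoke the assumed ``certain constraint qualification'' together with the corresponding KKT necessary condition.

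Having verified the hypotheses, Proposition \ref{Thm3.1} yields a multiplier $\lambda \ge 0$ with $0 \in \partial g_0(\bar z) - \partial h_0(\bar z) + \lambda(\partial g_1(\bar z) - \partial h_1(\bar z)) + \mathcal{N}_C(\bar z)$ and $\lambda(g_1(\bar z) - h_1(\bar z)) = 0$. Translating back with \eqref{wc_subdiff_sum}, the quadratic terms cancel: $\partial g_0(\bar z) - \partial h_0(\bar z) = \partial F_1(\bar z) - \partial F_2(\bar z)$ and $\partial g_1(\bar z) - \partial h_1(\bar z) = \partial f(\bar z) - \partial v_\gamma(\bar z)$, while $g_1(\bar z) - h_1(\bar z) = f(\bar z) - v_\gamma(\bar z) - \epsilon$. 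This is exactly the KKT system in the stationarity definition for DwC-${\rm (VP)}_\gamma^\epsilon$, so $(\bar x,\bar y)$ is a KKT point as claimed.

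The main obstacle is the bookkeeping for the constraint qualification: every subdifferential in (DC) refers to the convexified data $g_i,h_i$, whereas the conclusion and the NNAMCQ \eqref{newNNAMCQ} are stated in terms of the original weakly convex data $f,v_\gamma$, so the entire reduction rests on the sum rule \eqref{wc_subdiff_sum} and on Theorems \ref{Thm2}--\ref{Thm3} to license both the convexity and the local Lipschitzness of $v_\gamma$. The $\epsilon = 0$ branch is the genuinely delicate case, since NNAMCQ provably cannot hold there; the argument must lean on the unspecified alternative constraint qualification, which is precisely why the theorem treats $\epsilon = 0$ separately and the text defers a full discussion of such qualifications to the cited references.
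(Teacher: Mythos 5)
Your proposal is correct and follows exactly the route the paper intends: the paper states this theorem without a written-out proof, justifying it ``by virtue of Theorems \ref{Thm2} and \ref{Thm3} and Proposition \ref{Thm3.1},'' which is precisely your reduction to the standard (DC) program via the identification \eqref{setting}, with the sum rule \eqref{wc_subdiff_sum} used to cancel the quadratic terms and translate the NNAMCQ and KKT conditions back to the original data $F_1, F_2, f, v_\gamma$. Your spelled-out verification of convexity, local Lipschitzness, and the constraint-qualification correspondence fills in the details the paper leaves implicit, and your handling of the $\epsilon=0$ case (deferring to the unspecified constraint qualification) matches the paper's treatment.
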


By Theorem \ref{con_KL_alg1}, ENNAMCQ is needed to ensure the boundedness of the sequence $\{\beta_k\}$ and hence the convergence of the algorithm iP-DCA applied to the DC program DwC-$(VP)_\gamma$. Unfortunately, similarly to the value function reformulation ${\rm (VP)}$,  NNAMCQ never holds for  ${\rm (VP)}_\gamma$; see e.g., \cite[Proposition 7]{ye2021difference} and its proof. 
Thus, in Section \ref{sec:alg-design}--\ref{sec:alg-steps}, we solve the relaxed DC program DwC-${\rm (VP)}_\gamma^\epsilon$ for a small positive $\epsilon$ instead.

For  $\epsilon>0$, ENNAMCQ is a standard constraint qualification for ${\rm (VP)}_\gamma^\epsilon$; see  Proposition 4.1 in \citet{LinXuYe}. By Proposition 8 in \citet{ye2021difference}, ENNAMCQ holds automatically for problem ${\rm (VP)}_\gamma^\epsilon$ when $\gamma =\infty , \epsilon>0$.  For the case of $\gamma < \infty$, we show that  ENNAMCQ  holds 
for ${\rm (VP)}_\gamma^\epsilon$ with $\epsilon>0$ when the lower-level problem is strictly convex with respect to the lower-level variable $y$.

			\begin{proposition}\label{prop4.2}  Suppose that
  Assumption \ref{a3} hold and
 $\gamma \in (0, 1/\rho_f)$.
				Then for any $(\bar{x},\bar{y}) \in C$, 
				problem $(VP)_\gamma^\epsilon$ 
				 with $\epsilon>0$  satisfies ENNAMCQ at $(\bar{x},\bar{y})$ provided that for $\bar x$, the lower level problem is strictly convex, i.e., $f(\bar{x},y) + \delta_{C}(\bar{x},y)$ is strictly convex with respect to $y$.
			\end{proposition}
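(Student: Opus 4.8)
The plan is to verify the two alternatives in the definition of ENNAMCQ directly. Fix $(\bar x,\bar y)\in C$ and let $\tilde y$ be the (unique, by Assumption \ref{a1}) element of $S_\gamma(\bar x,\bar y)$. If $f(\bar x,\bar y)-v_\gamma(\bar x,\bar y)<\epsilon$, then the first alternative of ENNAMCQ holds and there is nothing to prove, so I would concentrate on the remaining case $f(\bar x,\bar y)-v_\gamma(\bar x,\bar y)\ge\epsilon$, where I must establish the NNAMCQ condition \eqref{newNNAMCQ}. The first observation is that this case forces $\bar y\neq\tilde y$: writing $\psi(w):=f(\bar x,w)+\delta_C(\bar x,w)$, which equals $f(\bar x,w)+\delta_{\mathcal F(\bar x)}(w)$ for $\bar x\in X$ and is strictly convex by hypothesis, feasibility of both $\bar y$ and $\tilde y$ gives $f(\bar x,\bar y)-v_\gamma(\bar x,\bar y)=\psi(\bar y)-\psi(\tilde y)-\tfrac{1}{2\gamma}\|\tilde y-\bar y\|^2$; if $\bar y=\tilde y$ this quantity would equal $0<\epsilon$, a contradiction.

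Next I would record the proximal optimality condition. Since $\tilde y=\arg\min_w\{\psi(w)+\tfrac1{2\gamma}\|w-\bar y\|^2\}$, the convex optimality condition yields $(\bar y-\tilde y)/\gamma\in\partial\psi(\tilde y)$. I then argue by contradiction: suppose \eqref{newNNAMCQ} fails, so there exist $a\in\partial f(\bar x,\bar y)$, $b\in\partial v_\gamma(\bar x,\bar y)$ and $c\in\mathcal N_C(\bar x,\bar y)$ with $a-b+c=0$. The key is to read off the $y$-component of this identity. By Theorem \ref{Thm4.1} every element of $\partial v_\gamma(\bar x,\bar y)$ has $y$-component exactly $(\bar y-\tilde y)/\gamma$; by the partial subdifferentiation inclusion of Proposition \ref{partiald} applied to the weakly convex $f$ and to the convex $\delta_C$, the $y$-components satisfy $a_y\in\partial_y f(\bar x,\bar y)$ and $c_y\in\partial_y\delta_C(\bar x,\bar y)=\mathcal N_{\mathcal F(\bar x)}(\bar y)$. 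Hence the $y$-part of $a-b+c=0$ reads $(\bar y-\tilde y)/\gamma=a_y+c_y\in\partial_y f(\bar x,\bar y)+\mathcal N_{\mathcal F(\bar x)}(\bar y)\subseteq\partial\psi(\bar y)$, where the final inclusion is the always-valid direction of the convex sum rule and so needs no extra constraint qualification.

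To finish, I would combine the two memberships $(\bar y-\tilde y)/\gamma\in\partial\psi(\bar y)$ and $(\bar y-\tilde y)/\gamma\in\partial\psi(\tilde y)$ with $\bar y\neq\tilde y$. For a strictly convex function the subdifferential is strictly monotone, so a common subgradient at two distinct points is impossible: denoting $p:=(\bar y-\tilde y)/\gamma$, strict monotonicity would give $0=\langle p-p,\bar y-\tilde y\rangle>0$, a contradiction. Equivalently, the two subgradient inequalities force $\psi$ to be affine on the segment $[\tilde y,\bar y]$, contradicting strict convexity. Therefore \eqref{newNNAMCQ} must hold, which settles the second case and hence ENNAMCQ.

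I expect the main obstacle to be the clean extraction of the $y$-component of the inclusion $a-b+c=0$: this is where the sensitivity formula of Theorem \ref{Thm4.1} (to pin down the $y$-component of $\partial v_\gamma$) and the partial subdifferentiation rule of Proposition \ref{partiald} (to split the $y$-parts of $\partial f$ and of $\mathcal N_C=\partial\delta_C$) must be invoked carefully, together with the observation that only the trivial direction of the convex sum rule is needed so that no auxiliary qualification on $g$ is required. Once these memberships are in hand, the strict-convexity contradiction is routine.
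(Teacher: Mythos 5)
Your proposal is correct and follows essentially the same route as the paper's proof: case split on the ENNAMCQ alternatives, then in the case $f(\bar x,\bar y)-v_\gamma(\bar x,\bar y)\ge\epsilon$ a contradiction obtained by combining the partial subdifferentiation rule (Proposition \ref{partiald}), the fact from Theorem \ref{Thm4.1} that the $y$-component of $\partial v_\gamma(\bar x,\bar y)$ is $(\bar y-\tilde y)/\gamma$, the proximal optimality condition at $\tilde y$, and strict monotonicity of the subdifferential of the strictly convex lower-level objective. The only differences are cosmetic: you establish $\bar y\neq\tilde y$ up front rather than deriving $\bar y=\tilde y$ at the end, and you phrase strict monotonicity for $\partial\psi$ rather than for $\partial_y f(\bar x,\cdot)+\mathcal{N}_{\mathcal{F}(\bar x)}(\cdot)$, which amounts to the same inclusion the paper uses implicitly.
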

			\begin{proof} 
				If $f(\bar{x},\bar{y}) - v_\gamma(\bar{x},\bar{y}) < \epsilon$ holds, then by definition, ENNAMCQ holds at $(\bar{x},\bar{y})$. Now
				suppose that $f(\bar{x},\bar{y}) - v_\gamma(\bar{x},\bar{y})  \ge \epsilon$ and ENNAMCQ does not hold, i.e., 
				\[
				0 \in \partial f(\bar{x},\bar{y}) -  \partial v_\gamma(\bar{x},\bar{y}) + \mathcal{N}_C(\bar{x},\bar{y}).
				\]
				It follows from {the partial subdifferentiation formula }(\ref{partialsubg})   that
				\begin{equation}\label{NNAMCQ_proof_eq1}
					0 \in \begin{bmatrix}
						\partial_x f(\bar{x},\bar{y})  - \partial_x v_\gamma(\bar{x},\bar{y})\\
						\partial_y f(\bar{x},\bar{y}) - \partial_y v_\gamma(\bar{x},\bar{y})
					\end{bmatrix} + \mathcal{N}_C(\bar{x},\bar{y}).
				\end{equation}
				By
				(\ref{partialsubg})  we have
				$$\mathcal{N}_C(\bar{x},\bar{y})=  \partial \delta_C(\bar x,\bar y) \subseteq \partial_x \delta_C(\bar x,\bar y)\times \partial_y \delta_C(\bar x,\bar y)\subseteq  \mathbb{R}^n \times \mathcal{N}_{{\cal F}(\bar{x})}(\bar{y}).$$
				Thus, it follows from \eqref{NNAMCQ_proof_eq1} that 
				\[
				0 \in \partial_y f(\bar{x},\bar{y})  - \partial_y v_\gamma(\bar{x},\bar{y}) + \mathcal{N}_{{\cal F}(\bar{x})}(\bar{y}).
				\]
				Thus 
				Let $\tilde{y}$ be the unique point in the set $S_{\gamma}(\bar{x}, \bar{y})$, it follows from Theorem \ref{Thm4.1} that $\partial_y v_\gamma(\bar{x},\bar{y}) = (\bar{y} - \tilde{y})/\gamma$ and thus 
				\[
				0 \in \partial_y f(\bar{x},\bar{y})  - \left(\bar{y} - \tilde{y}\right)/\gamma + \mathcal{N}_{{\cal F}(\bar{x})}(\bar{y}),
				\]
				and
				\begin{equation}\label{lem2_eq2}
					\left(\bar{y} - \tilde{y}\right)/\gamma \in \partial_y f(\bar{x},\bar{y}) + \mathcal{N}_{{\cal F}(\bar{x})}(\bar{y}).
				\end{equation}
				Since $ \tilde{y} \in S_{\gamma}(\bar{x}, \bar{y}) = \mathrm{argmin}_{w \in Y}\left\{ f(\bar x,w) + \frac{1}{2\gamma}\|w - \bar{y}\|^2 + \delta_{{\cal F}(\bar{x})}(w) \right\}, $ we have 
				\[
				0 \in \partial_y f(\bar{x},\tilde{y})  + \left(\tilde{y} - \bar{y}\right)/\gamma + \mathcal{N}_{{\cal F}(\bar{x})}(\tilde{y}),
				\]
				and
				\begin{equation}\label{lem2_eq3}
					\left(\bar{y} - \tilde{y}\right)/\gamma \in \partial_y f(\bar{x},\tilde{y}) + \mathcal{N}_{{\cal F}(\bar{x})}(\tilde{y}).
				\end{equation}
				As $f(\bar{x},y) + \delta_{C}(\bar{x},y)$ is assumed to be strictly convex with respect to variable $y$, it follows that $\partial_y f(\bar{x},\cdot) + \mathcal{N}_{{\cal F}(\bar{x})}(\cdot)$ is strictly monotone, see, e.g., \cite[Example 22.3]{Heinz-MonotoneOperator-2011}.
				If $\left(\bar{y} - \tilde{y}\right)/\gamma \neq 0$, the strict monotonicity of  $\partial_y f(\bar{x},\cdot) + \mathcal{N}_{{\cal F}(\bar{x})}(\cdot)$, combined with \eqref{lem2_eq2} and \eqref{lem2_eq3}, leads to a contradiction, as shown below
				\[
				 0 = \langle 	\left(\bar{y} - \tilde{y}\right)/\gamma - 	\left(\bar{y} - \tilde{y}\right)/\gamma, \bar{y} - \tilde{y} \rangle > 0.
				\]
				 Consequently, we conclude that $\bar{y} = \tilde{y}\in S_{\gamma}(\bar{x}, \bar{y})$, which implies $f(\bar{x},\bar{y}) = v_\gamma(\bar{x},\bar{y})$.
				However, an obvious contradiction to the assumption that $f(\bar{x},\bar{y}) - v_\gamma(\bar{x},\bar{y}) \ge \epsilon$ occurs and thus the desired conclusion follows immediately. 
			\end{proof}

	By Theorem 	\ref{con_KL_alg1} and Proposition \ref{prop4.2}, 
we have following convergence result for the iP-DwCA in Algorithm \ref{DCA2}.
\begin{theorem}\label{thm:con}  Assume  Assumptions \ref{a1}--\ref{A5} hold.	Suppose $F$ is bounded below on $C$ and functions $F_1$, $F_2$ and $f$ are locally Lipschitz on set $C$. Moreover suppose that  sequences $\{(x^k,y^k)\}$ and $\{\beta_k\}$ generated by the iP-DwCA are bounded. Then every accumulation point of $\{(x^k,y^k)\}$ is a KKT point of problem  {\rm DwC}-${\rm (VP)}_\gamma^\epsilon$. Furthermore, if the merit function ${E}_\beta$ with problem data (\ref{setting}) is a Kurdyka-\L{}ojasiewicz (KL) function (see Definition \ref{KL} for the detailed definition), then the sequence $\{(x^k,y^k)\}$ converges. Moreover either $\gamma=\infty$ or $\gamma \not =\infty$ but  $f(\bar{x},y) + \delta_{C}(\bar{x},y)$ is strictly convex with respect to $y$ for any accumulation point $\bar{x}$ of the  sequence  $\{x^k\}$, then the sequence $\{\beta_k\}$ must be bounded when $\epsilon > 0$.
\end{theorem}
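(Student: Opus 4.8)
The plan is to recognize that Theorem \ref{thm:con} is essentially a specialization of the general convergence result Theorem \ref{con_KL_alg1} to the concrete DC program DwC-${\rm (VP)}_\gamma^\epsilon$, supplemented by the constraint qualification needed for the boundedness claim. As already noted in the text, under the identification (\ref{setting}) the program DwC-${\rm (VP)}_\gamma^\epsilon$ is an instance of (DC) with $z = (x,y)$ and $\Sigma = C$, and iP-DwCA (Algorithm \ref{DCA2}) is exactly iP-DCA (Algorithm \ref{ipDCA}) applied to it. So the first task is simply to verify that the hypotheses of Theorem \ref{con_KL_alg1} hold in this instance, then read off its conclusions.

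First I would check the two structural hypotheses of Theorem \ref{con_KL_alg1}. The requirement that $f_0$ be bounded below on $\Sigma$ reduces, under (\ref{setting}), to $f_0 = g_0 - h_0 = F$ being bounded below on $C$, which is assumed. The requirement that $g_0, h_0, g_1, h_1$ be locally Lipschitz on $\Sigma = C$ follows because $F_1, F_2, f$ are locally Lipschitz by hypothesis, the Moreau envelope $v_\gamma$ is locally Lipschitz on $X \times Y \supseteq C$ by Theorem \ref{Thm3} (valid since Assumptions \ref{a1} and \ref{a3} hold and $\gamma \in (0, 1/\rho_f)$), and the added quadratic terms $\frac{\rho_F}{2}\|\cdot\|^2$ and $\frac{\rho_v}{2}\|\cdot\|^2$ are smooth. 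Combined with the assumed boundedness of $\{(x^k,y^k)\}$ and $\{\beta_k\}$, this lets me invoke Theorem \ref{con_KL_alg1} directly: every accumulation point is a KKT point of (DC), and if $E_\beta$ is a KL function then $\{(x^k,y^k)\}$ converges. The only care needed here is to confirm that a KKT point of (DC) in the sense of Definition \ref{Defn3.1} translates, via the weakly-convex sum rule (\ref{wc_subdiff_sum}) applied to $g_1 = f + \frac{\rho_v}{2}\|\cdot\|^2$ and $h_1 = v_\gamma + \frac{\rho_v}{2}\|\cdot\|^2 - \epsilon$, into a KKT point of DwC-${\rm (VP)}_\gamma^\epsilon$; the $\rho_v z$ terms generated by (\ref{wc_subdiff_sum}) cancel in the difference $\partial g_1 - \partial h_1 = \partial f - \partial v_\gamma$, so the two KKT systems coincide.

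The remaining and more delicate part is the boundedness of $\{\beta_k\}$ for $\epsilon > 0$. By Theorem \ref{con_KL_alg1}, it suffices to verify that ENNAMCQ (in the sense of Definition \ref{Defn3.2} for (DC)) holds at every accumulation point. I would again use (\ref{wc_subdiff_sum}) to see that, under (\ref{setting}), the abnormal-multiplier condition $0 \notin \partial g_1 - \partial h_1 + \mathcal{N}_\Sigma$ is exactly (\ref{newNNAMCQ}), so ENNAMCQ for (DC) coincides with ENNAMCQ for ${\rm (VP)}_\gamma^\epsilon$. I then split on $\gamma$: when $\gamma = \infty$, ENNAMCQ holds automatically for $\epsilon > 0$ by Proposition 8 of \citet{ye2021difference}; when $\gamma < \infty$, Proposition \ref{prop4.2} guarantees ENNAMCQ at any $(\bar{x},\bar{y}) \in C$ whenever $f(\bar{x},y) + \delta_C(\bar{x},y)$ is strictly convex in $y$, which is precisely the hypothesis imposed at accumulation points $\bar{x}$ of $\{x^k\}$. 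Applying this at each accumulation point and invoking the boundedness conclusion of Theorem \ref{con_KL_alg1} completes the argument.

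I expect the main obstacle to be bookkeeping rather than conceptual: carefully reconciling the abstract definitions of KKT point and ENNAMCQ for (DC) with their counterparts for DwC-${\rm (VP)}_\gamma^\epsilon$, ensuring the $\frac{\rho_v}{2}\|\cdot\|^2$ regularizations introduced to convexify the difference cancel correctly under (\ref{wc_subdiff_sum}), and confirming that Theorem \ref{Thm3} genuinely certifies local Lipschitz continuity of $v_\gamma$ on all of $C$ so that the Lipschitz hypothesis of Theorem \ref{con_KL_alg1} is available.
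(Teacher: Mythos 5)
Your proposal is correct and follows essentially the same route as the paper: the paper proves Theorem \ref{thm:con} precisely by viewing DwC-${\rm (VP)}_\gamma^\epsilon$ as the instance (\ref{setting}) of (DC), invoking Theorem \ref{con_KL_alg1} for the KKT/sequential-convergence conclusions, and invoking Proposition \ref{prop4.2} (for $\gamma<\infty$) together with Proposition 8 of \citet{ye2021difference} (for $\gamma=\infty$) to get ENNAMCQ and hence boundedness of $\{\beta_k\}$. Your additional bookkeeping --- verifying local Lipschitz continuity of $h_1$ via Theorem \ref{Thm3} and checking that the $\rho_v$ terms cancel under (\ref{wc_subdiff_sum}) so that the KKT and ENNAMCQ notions for (DC) coincide with those defined for DwC-${\rm (VP)}_\gamma^\epsilon$ --- is exactly what the paper leaves implicit in its definitions preceding the theorem.
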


To further justify the validity of our sequential convergence theory in real-world applications, we next discuss the KL property imposed on the merit function ${E}_\beta$ in Theorem \ref{thm:con}.
A large class of functions automatically satisfies the KL property, see  e.g.  \citet{bolte2010characterizations}, \citet{attouch2010proximal}, \citet{attouch2013convergence}, and \citet{bolte2014proximal}. As shown in \citet{bolte2007lojasiewicz} and \citet{bolte2007clarke}, the class of semi-algebraic functions is an important class of functions that meet the KL property.  
Specifically, the class of semi-algebraic functions is closed under various operations, see, e.g., \cite{attouch2010proximal,attouch2013convergence,bolte2014proximal}. In particular, the indicator functions of semi-algebraic sets, finite sum and product of semi-algebraic functions, composition of semi-algebraic functions and partial minimization of semi-algebraic function over semi-algebraic set	are all semi-algebraic functions. Applying this, when the lower-level problem functions $f$ and $g$ are both semi-algebraic, Moreau envelope function $v_\gamma$ is a semi-algebraic function. Moreover, if $F_1$ and $F_2$ are semi-algebraic functions, the merit function ${E}_\beta$ with problem data (\ref{setting})  is also semi-algebraic and hence a KL function. Hence by
Theorem \ref{thm:con}, we have the following result. 
\begin{theorem}\label{semi_con}
	Assume that $F_1$, $F_2$, $f$ and $g$ are semi-algebraic functions. Suppose  Assumptions \ref{a1}--\ref{A5} hold.	Suppose $F$ is bounded below on $C$ and functions $F_1$, $F_2$ and $f$ are locally Lipschitz on set $C$. Moreover suppose the sequences $\{(x^k,y^k)\}$ and $\{\beta_k\}$ generated by  Algorithm \ref{DCA2} are bounded. Then the sequence $\{(x^k,y^k)\}$ converges to a KKT point of problem {\rm DwC}-${\rm (VP)}_\gamma$. Furthermore, if  either $\gamma=\infty$ or $\gamma \not =\infty$ but  $f(\bar{x},y) + \delta_{C}(\bar{x},y)$ is strictly convex with respect to $y$ for any accumulation point $\bar{x}$ of the  sequence  $\{x^k\}$, then the sequence $\{\beta_k\}$ must be bounded when $\epsilon > 0$.\end{theorem}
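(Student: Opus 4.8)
The plan is to recognize Theorem \ref{semi_con} as a corollary of Theorem \ref{thm:con}: all hypotheses of Theorem \ref{thm:con} are either assumed outright here (Assumptions \ref{a1}--\ref{A5}, boundedness of $\{(x^k,y^k)\}$ and $\{\beta_k\}$, $F$ bounded below, local Lipschitz continuity of $F_1,F_2,f$) or are the strict-convexity condition governing boundedness of $\{\beta_k\}$, which is handled via Proposition \ref{prop4.2} inside Theorem \ref{thm:con}. The only genuinely new ingredient is verifying that the merit function $E_\beta$ of \eqref{meritf} with data \eqref{setting} is a KL function. I would establish this by proving $E_\beta$ is semi-algebraic and then invoking the standard fact that every semi-algebraic function satisfies the KL property \citep{bolte2007lojasiewicz,bolte2007clarke}.

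The first step is to show that the Moreau envelope $v_\gamma$ defined in \eqref{def_v1} is semi-algebraic. Since $f$ and $g$ are semi-algebraic and the proximal term is polynomial, the integrand $w \mapsto f(x,w) + \tfrac{1}{2\gamma}\|w-y\|^2 + \delta_C(x,w)$ is a finite sum of semi-algebraic functions; here $\delta_C$ is semi-algebraic because $C$ is a semi-algebraic set, which requires $X$ and $Y$ to be semi-algebraic (automatic in the intended applications, where they are boxes or polyhedra). By the Tarski--Seidenberg theorem, partial minimization of a semi-algebraic function over the semi-algebraic variable $w$ again yields a semi-algebraic function, so $v_\gamma$ is semi-algebraic.

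With $v_\gamma$ in hand I would assemble the remaining pieces. Under \eqref{setting} the functions $g_0,h_0,g_1,h_1$ are finite sums of $F_1,F_2,f,v_\gamma$ and polynomials, hence semi-algebraic. The merit function also contains the conjugates $h_0^*$ and $h_1^*$; since $h^*(\xi)=\sup_z\{\langle\xi,z\rangle - h(z)\}$ is a partial supremum of a semi-algebraic function, conjugation preserves semi-algebraicity, again by Tarski--Seidenberg. Because finite sums, products, maxima, inner products, norms, and indicators of semi-algebraic sets are all semi-algebraic, $E_\beta$ is semi-algebraic and therefore KL. Invoking Theorem \ref{thm:con} then yields convergence of $\{(x^k,y^k)\}$ to a KKT point of {\rm DwC}-${\rm (VP)}_\gamma^\epsilon$, while the stated boundedness of $\{\beta_k\}$ under the strict-convexity hypothesis follows from the ENNAMCQ argument of Proposition \ref{prop4.2} that is already embedded in Theorem \ref{thm:con}.

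The main obstacle is the semi-algebraicity of $v_\gamma$ and of the conjugates $h_0^*,h_1^*$: both rest on closure of semi-algebraic sets under projection (Tarski--Seidenberg / quantifier elimination), applied to partial infima and suprema. A subtle point worth flagging explicitly is the semi-algebraicity of $X$ and $Y$, needed so that $C$, and hence the indicator $\delta_C$ entering both $v_\gamma$ and $E_\beta$, is semi-algebraic; without this the conclusion can fail even when $f,g,F_1,F_2$ are themselves semi-algebraic.
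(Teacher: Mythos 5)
Your proposal is correct and follows essentially the same route as the paper: the paper's justification (given in the paragraph preceding the theorem) likewise argues that $v_\gamma$ and then the merit function $E_\beta$ with data \eqref{setting} are semi-algebraic via the closure of semi-algebraic functions under finite sums, indicators of semi-algebraic sets, and partial minimization, hence KL, and then invokes Theorem \ref{thm:con} (with Proposition \ref{prop4.2} handling boundedness of $\{\beta_k\}$). Your write-up is in fact slightly more careful than the paper's on two points it glosses over --- the semi-algebraicity of the conjugates $h_0^*, h_1^*$ appearing in $E_\beta$ (via partial supremum / Tarski--Seidenberg), and the implicit requirement that $X$ and $Y$ (hence $C$, hence $\delta_C$) be semi-algebraic --- both of which are genuine ingredients the argument needs.
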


A broad spectrum of functions encountered in various applications are semi-algebraic, encompassing real polynomial functions and a substantial class of matrix functions. For comprehensive coverage and further details, please refer to Section 4 in \citet{attouch2010proximal}, \citet{attouch2013convergence}, and \citet{bolte2014proximal}, as well as the references therein.

\begin{remark}
	The requirement of semi-algebraic functions stated in Theorem \ref{semi_con} can be relaxed to encompass more general functions that are definable in an o-minimal structure. A comprehensive treatment of this topic can be found in \cite{bolte2006nonsmooth, bolte2007lojasiewicz, bolte2007clarke}, where it is demonstrated that definable sets and functions possess similar properties to semialgebraic sets and functions. When $F_1$, $F_2$, $f$ and $g$ are definable in an o-minimal structure, the merit function ${E}_\beta$ with problem data (\ref{setting})  is also definable, rendering it a KL function, see \citep[Theorem 14]{attouch2010proximal}. Further examples and discussions regarding definable functions can be found in \citet{attouch2010proximal,bolte2016majorization}.
\end{remark}

\section{Bilevel Hyperparameter Tuning Examples}
\label{sec:examples}
We now discuss the main assumptions underlying our approach --  i.e. convexity of the lower level optimization problem (Assumption \ref{a1}), weak convexity of $f$ and $g$ (Assumption \ref{a2new}--\ref{a3}), and $F$ being a difference of weakly convex functions (Assumption \ref{a4}) --  in the specific context of the bilevel optimization problems for tuning hyperparameters in three popular prediction algorithms via a single training and validation split. The discussion for tuning hyperparameters via cross-validation is omitted due to similarity.

   \subsection{Elastic net }
   \label{sec:enet-setup}

Given data $\{(\mathbf{a}_i, b_i)\}_{i=1}^n$ with covariates $\mathbf{a}_i \in \mathbb{R}^p$, continuous response $b_i\in \mathbb{R}$ and a partition of $\{1, 2, \ldots, n\}$ into training examples and validation examples, $I_{\text{tr}}$ and $I_{\text{val}}$, the bilevel hyperparameter selection problem for elastic net regularized regression \citep{zou2003regression} is given by: 
		
			\begin{equation}
				\begin{aligned}
					\min_{\bm{\beta} \in \mathbb{R}^p, {\bm \lambda}\in \mathbb{R}_+^2}\ & 
					\frac12 \sum_{i \in I_{\text{val}}} \left ( b_i - \bm{\beta}^\top \mathbf{a}_i \right )^2 \\
					\text{s.t. } 
					&\bm{\beta} \in \mathop{\arg\min}_{\bm \beta' \in \mathbb{R}^p} 
					\bigg\{ 
					\frac12 \sum_{i\in I_{\text{tr}}} \left ( b_i - {\bm{\beta}'}^\top \mathbf{a}_i \right ) ^2 
					+\lambda_1\|{\bm{\beta}'}\|_1 +  \frac{\lambda_2}{2} \|{\bm{\beta}'}\|_2^2
					\bigg\}.
				\end{aligned}
				\label{eq:elastic1}
			\end{equation}

Assumption \ref{a1}  holds for \eqref{eq:elastic1}, since $f(\bm \lambda, \cdot)$ is a convex function for any $\bm \lambda$. Assumption \ref{a4} holds for \eqref{eq:elastic1}, since $F$ is a convex function of $\bm \lambda$ and $\bm \beta$, and convexity automatically implies weak convexity. Let 
\begin{equation} \label{eq:elasticobj}
f(\bm \lambda, \bm \beta) = \bigg\{ 
					\frac12 \sum_{i\in I_{\text{tr}}} \left ( b_i - {\bm{\beta}}^\top \mathbf{a}_i \right )^2 
					+\lambda_1\|{\bm{\beta}}\|_1 +  \frac{\lambda_2}{2} \|{\bm{\beta}}\|_2^2
					\bigg\},
\end{equation}
denote the objective function of the lower level program in \eqref{eq:elastic1}. 

We will need a mild additional assumption to address Assumptions \ref{a2new} and \ref{a3}: we need to assume that there exists $\bar{\bm \beta} \in \mathbb{R}_+$ such that for any $\bm \lambda \in \mathbb{R}_+^2$, we have:
\begin{align} 
 \left \{  \mathop{\arg\min}_{\bm \beta \in \mathbb{R}^p} f(\bm \lambda, \bm \beta) \right \} =  \left \{ \mathop{\arg\min}_{\bm \beta \in \mathbb{R}^p} f(\bm \lambda, \bm \beta) \text{ s.t. } |\beta_j| \leq \bar \beta_j \text{ for all } j = 1, 2, \ldots, p \right  \} 
\end{align} 
Under this assumption, we can solve \eqref{eq:elastic1} by solving the same problem with the domain of $\bm \beta$ replaced with the set $B = \{ \bm \beta \in \mathbb{R}^p: |\beta_j| \leq \bar \beta_j \text{ for all } j = 1, 2, \ldots, p\}$. 

It now remains to check if $f$ in \eqref{eq:elasticobj} is a weakly convex function of $\bm \lambda$ and $\bm \beta$ on the set $B$. 
First, observe that 
			\[
			\lambda_1 \|\bm{\beta}\|_1 +  \frac{\sqrt{p}}{2}\lambda_1^2 + \frac{\sqrt{p}}{2}\|{\bm{\beta}}\|^2_2 = \sum_{i = 1}^p  \frac{1}{2\sqrt{p}}\left (\lambda_1 + \sqrt{p}|\bm{\beta}_i| \right ) ^2,
			\]
is a composition of a linear function with a convex function. Thus, it follows from Proposition 1.54 in \cite{mordukhovich2013easy} that $\sum_{i = 1}^p \frac{1}{2} (\lambda_1 + \sqrt{p}|\bm{\beta}_i|)^2$ is convex with respect to $(\bm{\beta}, \lambda_1)$ on $\mathbb{R}^p \times \mathbb{R}_+$. This means that $\lambda_1 \|\bm{\beta}\|_1$ is $\sqrt{p}$-weakly convex with respect to $(\bm{\beta}, \lambda_1)$ on $\mathbb{R}^p \times \mathbb{R}_+$. Second, since its Hessian matrix is positive semi-definite,  the function $\frac{\lambda_2}{2} \|{\bm{\beta}}\|_2^2 + \frac{\rho}{2}\lambda_2^2 + \frac{\rho}{2} \|\bm{\beta}\|^2_2$ is convex with respect to $(\bm{\beta}, \lambda_2)$ on $B$ when $\rho \ge \|\bar{\bm{\beta}}\|_2$. We have now shown that the lower level objective function in \eqref{eq:elasticobj} is $\rho_f$-weakly convex with $\rho_f \ge \sqrt{p}+\|\bar{\bm{\beta}}\|_2$, making our proposed algorithm iP-DwCA applicable to \eqref{eq:elastic1}. 

   \subsection{Sparse Group Lasso}
   \label{sec:sgl-setup}

   			Suppose we have data $\{(\mathbf{a}_i^{(1)}, \ldots, \mathbf{a}_i^{(M)}, b_i)\}_{i=1}^n$, where $\mathbf{a}_i^{(m)} \in \mathbb{R}^{p_m}$ for $m = 1, 2, \ldots, M$, and $b_i\in \mathbb{R}$, and we have a partition of $\{1, 2, \ldots, n\}$ into training examples and validation examples, $I_{\text{tr}}$ and $I_{\text{val}}$. We can choose hyperparameters for sparse group lasso regression by solving the following problem:
			\begin{equation}
				\begin{aligned}
					\min_{\bm{\beta}^{(m)} \in \mathbb{R}^{p_m}, \bm{\lambda}\in \mathbb{R}_+^{M+1}}\ & \frac12 \sum_{i \in I_{\text{val}}} \left ( b_i - \sum \limits_{m=1}^M (\bm{\beta}^{(m)} )^\top \mathbf{a}_i^{(m)} \right )^2 \\
					\text{s.t. }\ &\bm{\beta} \in  \mathop{\arg\min}_{ \bm{\tilde \beta}^{(m)}  \in \mathbb{R}^{p_m}} \bigg\{ \frac12 \sum_{i\in I_{\text{tr}}} |  b_i - \sum \limits_{m=1}^M (\bm{\tilde \beta}^{(m)} )^\top \mathbf{a}_i^{(m)}  |^2 + \sum_{m=1}^M \lambda_m \|\tilde{\bm{\beta}}^{(m)}\|_2 + \lambda_{M+1}\|\tilde{\bm{\beta}}\|_1 \bigg\},
				\end{aligned}
				\label{eq:sgl1}
			\end{equation}
   where $\bm \beta \equiv (\bm{\beta}^{(1)}, \ldots, \bm{\beta}^{(M)}$.

Assumption \ref{a1}  holds for \eqref{eq:sgl1}, since the lower level objective function $f(\bm \lambda, \cdot)$ is a convex function for any $\bm \lambda \in \mathbb{R}_+^{M+1}$. Assumption \ref{a4} holds for \eqref{eq:sgl1}, since the upper level objective function  $F$ is a convex function of $\bm \lambda$ and $\bm \beta$, and convexity automatically implies weak convexity. It now remains to check whether $f$ is a weakly convex function of $\bm \lambda$ and $\bm \beta^{(1)}$, \ldots, $\beta^{(M)}$. 
First, since
			\[
			\sum_{m=1}^M \lambda_m \|\hat{\bm{\beta}}^{(m)}\|_2  + \sum_{m=1}^M \frac{1}{2}\lambda_m^2 + \frac{1}{2}\|\hat{\bm{\beta}}\|_2 ^2 =\sum_{m=1}^M \frac{1}{2}(\lambda_m + \|\hat{\bm{\beta}}^{(m)}\|_2)^2,
			\]
			is a composition of a linear function with a convex function,
			it is convex with respect to $(\bm{\beta}, \lambda_1, \ldots, \lambda_M)$ on $\mathbb{R}^p \times \mathbb{R}^M_+$. Consequently, $\sum_{m=1}^M \lambda_m \|\hat{\bm{\beta}}^{(m)}\|_2$ is $1$-weakly convex with respect to $(\bm{\beta}, \lambda_1, \ldots, \lambda_M)$ on $\mathbb{R}^p \times \mathbb{R}^M_+$. As demonstrated earlier, $\lambda_{M+1}\|\hat{\bm{\beta}}\|_1$ is $\sqrt{p}$-weakly convex with respect to $(\bm{\beta}, \lambda_{M+1})$ on $\mathbb{R}^p \times \mathbb{R}_+$. Therefore, the lower-level objective function in \eqref{eq:sgl1} is $\rho_f$-weakly convex with $\rho_f \ge 1 + \sqrt{p}$, making our proposed algorithm iP-DwCA applicable to \eqref{eq:sgl1}. 

   \subsection{RBF Kernel Support Vector Machine} 
   \label{sec:svm-setup}

  Let $\bm{\varphi}_{\sigma}(\cdot)$ denote the RBF kernel feature mapping, which is parameterized by $\sigma$. 
  
  Given data $\{(\mathbf{a}_i, b_i)\}_{i=1}^n$ with covariates $\mathbf{a}_i \in \mathbb{R}^p$, binary response $b_i$ and a partition of $\{1, 2, \ldots, n\}$ into training examples and validation examples, $I_{\text{tr}}$ and $I_{\text{val}}$ and a fixed choice of $C$ and $\sigma$, the optimization problem of RBF kernel SVM on the training data can be expressed as:
			\[
			\min_{\mathbf{w}\in \mathbb{R}^p, c \in \mathbb{R}}
			\bigg\{ \frac12\|\mathbf{w}\|^2 + C\sum_{i \in  I_{\text{tr}}}\max( 1 - b_i(\mathbf{w}^\top \bm{\varphi}_{\sigma}(\mathbf{a}_i) - c),0) \bigg\}.    
			\]
			This optimization problem is a unconstrained convex problem and  by reformulating it as a constrained convex problem, one has the strong duality.  The dual problem is equivalent to the following convex problem:
			\begin{align*}
				\min_{\eta\in \mathbb{R}^{|I_{\mathrm{tr}}|} } \ & \frac12 \sum_{i, j \in  I_{\text{tr}}} 
				 \eta_i \eta_j b_i b_j K_{\sigma}(\mathbf{a}_i, \mathbf{a}_j) - \sum_{i \in  I_{\text{tr}}} \eta_i \\
				\text{s.t. } & \sum_{i \in  I_{\text{tr}}} \eta_i b_i = 0, \ 0\le \eta_i \le C, i \in I_{\mathrm{tr}},
			\end{align*}
   where $K_{\sigma}(\mathbf{a}, \mathbf{a}') = \exp( - \sigma \|\mathbf{a} - \mathbf{a}'\|^2)$.
			Suppose that  $\eta$ is a solution of  the dual problem. Then  the unique optimal solutions of the original kernel SVM can be calculated by
			\begin{equation}\label{def_wb}
				\mathbf{w} = \sum_{i \in  I_{\text{tr}}} \eta_i b_i \bm{\varphi}_{\sigma}(\mathbf{a}_i), \quad     c = \sum_{i \in  I_{\text{tr}}} \eta_i b_i K_{\sigma}(\mathbf{a}_i, \mathbf{a}_{j^*}) - b_{j^*}, \quad \text{for any} ~ j^*\in  I_{\text{tr}} ~ \text{satisfying} ~ 0 < \eta_{j^*} < C.
			\end{equation}
			Thus, by using the SVM hinge loss as the measure of validation accuracy, and using \eqref{def_wb}, we can formulate hyperparameter selection for RBF Kernel SVM as the following bilevel program,
		\begin{equation}\label{eq:svm-explicite}
				\begin{aligned}
					\min_{\sigma \in \Sigma, C \in \mathbf{C},  \eta\in \mathbb{R}^{|I_{\mathrm{tr}}|}}\ & \sum_{j \in  I_{\text{val}}}  \max \left (1 - b_j \left (\sum_{i \in  I_{\text{tr}}} \eta_i b_i K_\sigma(\mathbf{a}_i, \mathbf{a}_j)- \sum_{i \in  I_{\text{tr}}} \eta_i b_i K_{\sigma}(\mathbf{a}_i, \mathbf{a}_{j^*}) + b_{j^*}\right ), 0\right ) \\
					\text{s.t. } & 
					\begin{aligned}[t]
						\eta \in
						\begin{aligned}[t]
							\mathop{\arg\min}_{\eta \in \mathbb{R}^{|I_{\mathrm{tr}}|}}\ & \frac12 \sum_{i,j \in  I_{\text{tr}}} 
							 \eta_i \eta_j b_i b_j K_\sigma(\mathbf{a}_i, \mathbf{a}_j) - \sum_{i \in  I_{\text{tr}}}  \eta_i \\
							\text{s.t. } & \sum_{i \in  I_{\text{tr}}} \eta_i b_i = 0, \ 0\le \eta_i \le C,
						\end{aligned} \\
					\end{aligned}
				\end{aligned}
			\end{equation}	
			where  $j^*:=j^*(\eta)\in  I_{\text{tr}}$ satisfying $0<\eta_{j^*}<C$.

Assumption \ref{a1} holds for \eqref{eq:svm-explicite}, as the lower level problem is a convex optimization problem in $\eta$ for any $\sigma \in \Sigma$ and $C \in \mathbf{C}$. Furthermore, since $K_\sigma$ is twice continuously differentiable with respect to $\sigma$, the lower level objective function is twice continuously differentiable. When $\Sigma$ and $\mathbf{C}$ are bounded, this means that the lower level objective function is weakly convex  on the feasible region of variables $\sigma$ and $\eta$, which establishes the key parts of Assumptions \ref{a2new} and \ref{a3} for \eqref{eq:svm-explicite}. 

Assumption \ref{a4} requires the upper level objective function to be weakly convex. The upper level objective function is difficult to work with, as $j^*$ is a complicated function of $\eta$; see \eqref{def_wb}. However, if we ignore the dependence of $j^*$ on $\eta$, then the upper level objective function in \eqref{eq:svm-explicite} is twice continuously differentiable, and thus weakly convex  on the feasible region of variables $\sigma$, $C$ and $\eta$.

			\section{Numerical Experiments}
			\label{sec:exp}
			In this section, we conduct experiments to evaluate the performance of our proposed algorithm iP-DwCA to tune hyperparameters for elastic net, sparse group lasso, and radial basis function kernel support vector machines (RBF-SVM) via a single training and validation split. We compare the computional time, validation error, and test error of iP-DwCA to grid search, random search, and Bayesian optimization method known as the {\it Tree-structured Parzen Estimator approach (TPE)} (\cite{bergstra2013making}). We also apply iP-DCA to solve the reformulated problem \citep{pmlr-v162-gao22j} when this approach is available.

			All numerical experiments are performed on a personal computer with Intel(R) Core(TM) i9-9900K CPU @ 3.60GHz and 16.00 GB memory. We implement all algorithms in Python, and solve all subproblems using ``Pyomo'', which is an open-source, Python-based optimization modeling package.

			\subsection{Elastic net}
   \label{sec:exp-enet}
			
			Details of the elastic net bilevel problem setup and needed assumptions are in Section \ref{sec:enet-setup}. For our experiment, we set $\bar{\bm{\beta}} = 2 \times \bm{1}_p$, the number of training samples (nTr) $= 100$, the number of validation samples (nVal) $= 100$, the number of test samples (nTest) $=300$, and the numbers of features $(p) = 50, 100$.  The covariates $\mathbf{a}_i\in \mathbb{R}^{p}$ were generated i.i.d. from a $p$-dimensional Gaussian distribution with mean $0_p$ and covariance $\mathrm{Cov}(a_{ij}, a_{ik}) = 0.5^{|j - k|}$. The response $\bm{b}$ was generated by
			\[
			b_i = \bm{\beta}^\top \mathbf{a}_i + \sigma \epsilon_i
			\]
			where $\beta_i \in \{0, 1\}$, $\sum_{i = 1}^{p}\beta_i = 15$, $\bm{\epsilon}$ was sampled from the standard Gaussian distribution, and $\sigma$ was chosen such that the signal-to-noise ratio (SNR) is 2, i.e., $\|\mathbf{b} - \bm{\epsilon}\| / (\sigma \|\bm{\epsilon}\|) = 2$.
			
			Grid search method was implemented on a $30\times 30$ uniform-spaced grid on $[0, 100] \times [0, 100]$. Random search was implemented with 100 uniformly random sampling on $[0, 100] \times [0, 100]$. The spaces used in the TPE method for both $\lambda_1$ and $\lambda_2$ were set as the uniform distribution on $[0, 100]$. The initial values for the upper-level variables in iP-DwCA are determined through a preliminary grid search conducted on a $6\times 6$ uniform-spaced grid on $[0, 100] \times [0, 100]$, with the selection of parameters yielding the smallest upper-level objective value. The initial values of upper-level variables of VF-iDCA is $(10, 10)$; we added a box constraint $[0, 100]$ on all the upper-level variables when implementing these two methods.
			The parameters in iP-DwCA were chosen as $\rho_F = 0$, $\rho_f = 2\sqrt{p}$, $\rho_v = 2\rho_f$, $\gamma = 0.5 / \rho_f$, $\beta_0 = 1$, $\delta_\beta = 1$, $c_\beta = 1$, $\alpha = 0.01$, and $\epsilon = 10^{-6}$. The parameters in VF-iDCA were chosen as $\alpha_0 = 1$, $\delta_\alpha = 5$, $c_\alpha = 1$, $\rho = 0.0001$, $\epsilon = 10^{-6}$.
			Both iP-DwCA and VF-iDCA were terminated when the generated iterate satisfied $\max\{t_k / f(\mathbf{x}^k, \mathbf{y}^k), \|(\mathbf{x}^k, \mathbf{y}^k) - (\mathbf{x}^{k - 1}, \mathbf{y}^{k-1})\|\} \le $ tol for the given tol or reach the maximum iteration number $200$. Specifically, we set tol = $10^{-3}$ for iP-DwCA and tol = $10^{-4}$ for VF-iDCA in the example with $p = 50$, and set tol = $10^{-2}$ for iP-DwCA and tol = $10^{-3}$ for VF-iDCA in the example with $p = 100$.
			Since DC type methods always converge fast in the first few iterations, we also investigated the performance of iP-DwCA and VF-iDCA with an early stop strategy (terminated after 10 iterations for iP-DwCA and 30 iterations for VF-iDCA), and denote them by iP-DwCA-E and VF-iDCA-E, respectively.
			
			All the numerical results were averaged over 100 random trials and reported in Table \ref{table:elastic1} and Table \ref{table:elastic2}, where ``Feasibility" denotes a scaled value of the violation of the value function constraint of the iterates generated by iP-DwCA and VF-iDCA, i.e., $(f(\mathbf{x}^k, \mathbf{y}^k) - v_\gamma (\mathbf{x}^k, \mathbf{y}^k))/|I_{\text{tr}}|$ for iP-DwCA and $(f(\mathbf{x}^k, \mathbf{y}^k) - v(\mathbf{x}^k)/|I_{\text{tr}}|$ for VF-iDCA. When feasibility is non-zero, it means that the attained solution is not on the elastic net solution path. As this could be problematic in some applications where the variable selection power of elastic net is of primary interest, we report two types of validation and test error:  ``Val. Err. Infeas." and ``Test. Err. Infeas.", and ``Val. Err." and ``Test. Err.". Here, ``infeasible" validation and test error are calculated using the lower-level variable values of iterates produced by iP-DwCA and VF-iDCA, respectively. For the ``feasible" validation and test error, we take the hyperparameter value of the iterates produced by iP-DwCA and VF-iDCA, solve the lower-level problem with that hyperparameter value to get the corresponding elastic net estimator, then compute the validation and test error of that estimator. An analogous approach is taken for the simulation results in subsequent subsections on sparse group lasso and kernel SVM. The results are in Tables \ref{table:elastic1} and \ref{table:elastic2}.

\begin{table}[ht]
    \centering 
    \caption{Results on the elastic net hyperparameter selection problem (nTr = 100, nVal = 100, nTest = 300, $p$ = 50)}
    \label{table:elastic1}
    \resizebox{\textwidth}{!}{
    \begin{tabular}{c cccccc}
        \toprule 
Method     &      Time(s) & Val. Err. & Test. Err. & Val. Err. Infeas. & Test. Err. Infeas. &  Feasibility\\
\midrule
 iP-DwCA-E &   4.4$\pm$0.5 &   4.6$\pm$0.9 &   4.5$\pm$0.8 &   2.8$\pm$0.6 &   4.0$\pm$0.5 &   0.5$\pm$0.3  \\
 iP-DwCA   &  38.4$\pm$8.3 &   4.6$\pm$0.9 &   4.5$\pm$0.8 &   4.2$\pm$0.8 &   4.4$\pm$0.7 &   0.01$\pm$0.01  \\
 VF-iDCA-E &   4.9$\pm$0.4 &   4.5$\pm$0.9 &   4.5$\pm$0.8 &   4.3$\pm$0.8 &   4.4$\pm$0.7 &   0.01$\pm$0.00  \\
 VF-iDCA   &  33.6$\pm$3.8 &   4.5$\pm$0.9 &   4.5$\pm$0.7 &   4.5$\pm$0.9 &   4.5$\pm$0.7 &   0.00$\pm$0.00  \\
 Grid      &  54.7$\pm$5.5 &   4.5$\pm$0.9 &   4.5$\pm$0.8 & - &  - &  - \\
 Random    &  57.9$\pm$4.4 &   4.5$\pm$0.9 &   4.5$\pm$0.8 & - &  - &  - \\
 TPE       &  58.8$\pm$5.0 &   4.5$\pm$0.9 &   4.5$\pm$0.8 & - &  - &  - \\
\bottomrule 
    \end{tabular}}
\end{table}
   
\begin{table}[ht]
    \centering 
    \caption{Results on the elastic net hyperparameter selection problem  (nTr = 100, nVal = 100, nTest = 300, $p$ = 100)}
    \label{table:elastic2}
    \resizebox{\textwidth}{!}{
    \begin{tabular}{c cccccc}
        \toprule 
Method     &      Time(s) & Val. Err. & Test. Err. & Val. Err. Infeas. & Test. Err. Infeas. &  Feasibility\\
\midrule
 iP-DwCA-E &   8.2$\pm$0.8 &  4.3$\pm$0.8 &  4.4$\pm$0.7 &  1.9$\pm$0.6 &  4.0$\pm$0.6 &   0.8$\pm$0.5  \\
 iP-DwCA   &  64.6$\pm$23.1&  4.3$\pm$0.9 &  4.4$\pm$0.8 &  3.5$\pm$0.7 &  4.0$\pm$0.7 &   0.02$\pm$0.05  \\
 VF-iDCA-E &   8.5$\pm$0.6 &  4.4$\pm$1.1 &  4.6$\pm$0.9 &  3.6$\pm$0.6 &  4.2$\pm$0.7 &   0.03$\pm$0.04  \\
 VF-iDCA   &  38.9$\pm$13.7&  4.3$\pm$1.0 &  4.4$\pm$0.8 &  4.1$\pm$0.9 &  4.4$\pm$0.8 &   0.00$\pm$0.00  \\
 Grid      &  99.7$\pm$8.6 &  4.2$\pm$0.8 &  4.3$\pm$0.7 & - &  - &  - \\
 Random    & 105.9$\pm$7.4 &  4.2$\pm$0.8 &  4.4$\pm$0.7 & - &  - &  - \\
 TPE       & 107.2$\pm$7.5 &  4.2$\pm$0.8 &  4.3$\pm$0.7 & - &  - &  - \\
        \bottomrule 
    \end{tabular}}
\end{table}

In Tables \ref{table:elastic1} and \ref{table:elastic2}, the early-stopping versions of both bilevel programming-based algorithms -- our new method iP-DwCA and the earlier version VF-iDCA, which is a special case of our framework (see Sections \ref{sec:moreau-reform} and \ref{sec:alg}) -- are significantly faster than grid search, random search, and TPE. Without early stopping, the bilevel programming-based algorithms achieve moderate computational time gains. All methods achieve comparable validation and test error. However, interestingly, we see that the bilevel algorithms -- especially with early stopping -- sometimes lead to $\beta$ solutions that are not on the elastic net regularization path, but have lower test error. This could potentially be useful if the practitioner is applying elastic net with an eye to prediction accuracy only, but not if the practitioner is applying elastic net for variable selection, as there is no guarantee that the attained $\beta$ solution has the sparsity properties of elastic net. We conclude by commenting that iP-DwCA always achieved the given stopping criterion, which supports the established convergence results.

			\subsection{Sparse group Lasso}
			
			We tested the sparse group Lasso hyperparameter selection problem (see Section \ref{sec:sgl-setup}) with varying numbers of training samples (nTr), validation samples (nVal), feature dimensions ($p$), and number of groups ($M$). The number of test samples (nTest) was always set to $300$. The datasets employed in the numerical experiments of the sparse group Lasso problem were generated as follows. Each $\mathbf{a}_i \in \mathbb{R}^{p}$ was sampled from the standard normal distribution, and the response $\bm{b}$ was generated by:
			\[
			b_i = \bm{\beta}^\top \mathbf{a}_i + \sigma \epsilon_i
			\]
			where $\bm{\beta}^\top = \big[ \bm{\beta}^{(1)\top}, \dots, \bm{\beta}^{(5)\top} \big]$ with $\bm{\beta}^{(i)\top} = [0+i, 1+i, 2+i, 3+i, 4+i, 0,\dots, 0]\in R^{\frac{p}5}$ for $i = 1,\dots,5$, $\bm{\epsilon}$ was generated from the standard normal distribution, and $\sigma$ was chosen such that the SNR is 2. The groups ${\bm{\beta}}^{(m)}$ in \eqref{eq:sgl1} were selected as ${\bm{\beta}}^{(m)} = (\beta_{(m-1)p/M+1}, \ldots, \beta_{(m-1)p/M+M} )$.
			
			Grid search was implemented with $\lambda_1 = \lambda_2 = \cdots = \lambda_{M} = p_1$ and $\lambda_{M+1} = p_2$ over a $10 \times 10$ uniformly spaced grid on $[5, 500] \times [5, 500]$. Random search was implemented using 100 uniformly random samples on $[5, 500]^M \times [5, 500]$. For the Tree-structured Parzen Estimator (TPE) method, we selected a uniform distribution on $[5, 500]$ for $\lambda_i, i = 1, \dots, M$, and a uniform distribution on $[5, 500]$ for $\lambda_{M+1}$. For iP-DwCA, the initial values for $\lambda_1 ,\dots, \lambda_M$ and $\lambda_{M+1}$ are determined through an initial coarse grid search conducted on a $5\times 5$ uniform-spaced grid on $[5, 500] \times [5, 500]$. We have imposed box constraints of $[5, 500]$ on $\lambda_1, \dots, \lambda_M$, as well as on $\lambda_{M+1}$. For VF-iDCA, we used 1 as the initial value for $r_1, \dots, r_{M+1}$, and added box constraints $[0, 100]$ on $r_1,\dots, r_{M}$ and a box constraint $[0,100]$ on $r_{M+1}$.
			
			The parameters for iP-DwCA were set as $\rho_F = 0$, $\rho_f = 1+ \sqrt{p}$, $\rho_v = 2\rho_f$, $\gamma = 0.5 / \rho_f$, $\beta_0 = 1$, $\delta_\beta = 0.1$, $c_\beta = 1$, $\alpha = 0.001$, and $\epsilon = 10^{-6}$. For VF-iDCA, the parameters were chosen as $\alpha_0 = 1$, $\delta_\alpha = 5$, $c_\alpha = 1$, $\rho = 10^{-4}$, and $\epsilon = 10^{-6}$. The maximum iteration number for iP-DwCA and VF-iDCA was set to 100. Both iP-DwCA and VF-iDCA were terminated when the generated iterate satisfied $\max\{t_k / f(\mathbf{x}^k, \mathbf{y}^k), \|(\mathbf{x}^k, \mathbf{y}^k) - (\mathbf{x}^{k - 1}, \mathbf{y}^{k-1})\|\} \le $ tol. Specifically, we set tol = 1 for iP-DwCA, and tol = 0.05 for VF-iDCA.
			
			We also investigated the performance of iP-DwCA and VF-iDCA with an early stop strategy (terminated after 5 iterations for iP-DwCA and 20 iterations for VF-iDCA) and denoted them as iP-DwCA-E and VF-iDCA-E, respectively. All results were averaged over 100 random trials and reported in Table \ref{table:sgl1} and Table \ref{table:sgl2}, where ``Feasibility" represents the violation of the value function constraint for the iterates generated by iP-DwCA and VF-iDCA, i.e., $(f(\mathbf{x}^k, \mathbf{y}^k) - v_\gamma (\mathbf{x}^k, \mathbf{y}^k))/|I_{\text{tr}}|$ for iP-DwCA and $(f(\mathbf{x}^k, \mathbf{y}^k) - v(\mathbf{x}^k))/|I_{\text{tr}}|$ for VF-iDCA. Moreover, we use "Val. Err. Infeas." and "Test. Err. Infeas." to denote the validation error and test error calculated directly using the lower-level variable values of iterates produced by iP-DwCA and VF-iDCA, respectively; see the discussion in Section \ref{sec:exp-enet}

\begin{table}[ht]
    \centering 
    \caption{Results on the sparse group Lasso hyperparameter selection problem  (nTr = 300, nVal = 300, nTest = 300, $p$ = 90, $M$ = 30) }
    \resizebox{\textwidth}{!}{
    \begin{tabular}{c cccc ccc}
        \toprule 
Method     &      Time(s) & Val. Err. & Test. Err. & Val. Err. Infeas. & Test. Err. Infeas. &  Feasibility\\
\midrule
iP-DwCA-E &  42.3$\pm$55.5& 106.1$\pm$10.2& 109.2$\pm$ 9.4&  82.3$\pm$ 6.7& 100.4$\pm$ 8.1&  12.01$\pm$  2.45\\
iP-DwCA   & 129.3$\pm$111.8& 106.0$\pm$10.1& 109.1$\pm$ 9.4&  97.5$\pm$ 9.2& 104.8$\pm$ 8.7&   0.99$\pm$  0.01\\
VF-iDCA-E &  11.2$\pm$ 0.4&  93.1$\pm$ 8.4& 104.0$\pm$ 9.0&  87.3$\pm$ 7.4& 102.3$\pm$ 8.5&   0.41$\pm$  0.23\\
VF-iDCA   &  34.2$\pm$11.3&  91.5$\pm$ 8.1& 103.1$\pm$ 8.7&  89.6$\pm$ 7.9& 102.5$\pm$ 8.6&   0.05$\pm$  0.01\\
Grid      &  71.0$\pm$ 2.0& 106.1$\pm$10.2& 109.1$\pm$ 9.4& - &  - &  - \\
Random    &  79.0$\pm$ 2.0& 103.1$\pm$ 9.6& 107.5$\pm$ 9.1& - &  - &  - \\
TPE       &  93.1$\pm$ 2.3& 100.6$\pm$ 9.3& 106.2$\pm$ 8.7& - &  - &  - \\
        \bottomrule 
    \end{tabular}}
    \label{table:sgl1}
\end{table}

\begin{table}[ht]
    \centering 
    \caption{Results on the sparse group Lasso hyperparameter selection problem  (nTr = 300, nVal = 300, nTest = 300, $p$ = 150, $M$ = 30)}
    \resizebox{\textwidth}{!}{
    \begin{tabular}{c cccccc}
        \toprule 
Method     &      Time(s) & Val. Err. & Test. Err. & Val. Err. Infeas. & Test. Err. Infeas. &  Feasibility\\
\midrule 
iP-DwCA-E &  22.2$\pm$36.9& 107.5$\pm$10.2& 109.1$\pm$10.1&  79.0$\pm$ 7.7& 100.1$\pm$ 8.4&  15.04$\pm$  2.74\\
iP-DwCA   & 159.3$\pm$107.5& 107.5$\pm$10.2& 109.1$\pm$10.1&  98.1$\pm$ 9.4& 104.6$\pm$ 9.3&   1.00$\pm$  0.05\\
VF-iDCA-E &  25.1$\pm$ 1.0&  93.1$\pm$ 9.1& 104.7$\pm$ 9.0&  85.3$\pm$ 8.0& 102.9$\pm$ 8.7&   0.58$\pm$  0.28\\
VF-iDCA   &  86.4$\pm$26.3&  91.6$\pm$ 8.9& 102.7$\pm$ 8.8&  89.3$\pm$ 8.7& 102.1$\pm$ 8.8&   0.05$\pm$  0.01\\
Grid      & 160.9$\pm$ 9.3& 107.4$\pm$10.2& 108.9$\pm$10.1& - &  - &  - \\
Random    & 193.3$\pm$10.4& 106.0$\pm$10.2& 109.2$\pm$ 9.9& - &  - &  - \\
TPE       & 203.1$\pm$ 9.2& 103.4$\pm$10.1& 107.8$\pm$ 9.8& - &  - &  - \\
        \bottomrule 
    \end{tabular}}
    \label{table:sgl2}
\end{table}

In Tables \ref{table:sgl1} and \ref{table:sgl2}, the early-stopping versions of both bilevel programming-based algorithms are significantly faster than grid search, random search, and TPE. Without early stopping, iP-DwCA is somewhat slower, while iP-DCA remains much faster than grid search, random search, and TPE. With or without early stopping, the bilevel programming-based algorithms achieve relatively close validation and test error to grid search, random search, and TPE. Finally, we again see that bilevel algorithms lead to $\beta$ solutions that are not on the group lasso regularization path, but have comparable or better test error. The same caveats as in Section \ref{sec:exp-enet} apply here: this could potentially be useful if the practitioner is applying group lasso with an eye to prediction accuracy only, but not if the practitioner is applying group lasso for variable selection, as there is no guarantee that the attained $\beta$ solution has the sparsity properties of group lasso. In the setting that prediction accuracy is of primary interest, the early stopping version of our bilevel algorithms are of special interest, as they can converge to a solution much faster than applying grid search, random search, or TPE.

			\subsection{RBF Kernel Support Vector Machine}
		
			We tested the RBF Kernel SVM hyperparameter selection problem (see Section \ref{sec:svm-setup}) using the kernel function $K_{\sigma}(\mathbf{a}, \mathbf{a}') = \exp( - \sigma \|\mathbf{a} - \mathbf{a}'\|^2)$, with $\Sigma = [0.01, 10]$ and $\mathbf{C} = [0.01, 10]$.
			
			Synthetic data used in the experiments were generated as follows: each dataset contained 100 training samples, 100 validation samples, and 300 test samples. As shown in Figure \ref{fig:svm_ref}, we first generated 500 points within a square centered at the origin with a side length of $4\sqrt{\mathrm{arcsec}\sqrt5}$ (the black square), then constructed a region as the union of two 2 by 1 ellipses (the blue curve) and labeled the points inside the region (red points) as -1 and the points outside the region (gray points) as 1. Lastly, we added noise $\epsilon$, generated from $\mathcal{N}(0, 0.2)$, to all data points.
			
			We implemented grid search for ($\sigma$, $C$) over a $10\times 10$ uniformly-spaced grid on $[0.1, 10] \times [1, 10]$. Random search was implemented for ($\sigma$, $C$) with 100 uniformly random samples on $[0, 10] \times [0, 10]$. For the TPE method, both $C$ and $\sigma$ utilized the uniform distribution on $[0, 10]$. 
   
   To implement iP-DwCA (Algorithm \ref{DCA2}), we need to initialize with the modulars for the weakly convex lower and upper level objective functions, $\rho_f$ and $\rho_F$. However, in this case, calculating the modulars analytically is challenging. Thus, we used large values in hopes of overestimating the modulars: we set $\rho_F = \rho_f = 50$ in iP-DwCA. Other parameters in iP-DwCA were chosen as $\rho_v = 2 \rho_f$, $\sigma = .5/\rho_f$, $\beta_0 = 0.0001$, $\delta_\beta = 0.0005$, $c_\beta = 1$, $\epsilon = 0.0001$, and $\alpha = 0.0001$. We deal with the fact that the upper level objective function depends on an index $j^*$ which depends on the lower level solution (as discussed in Section \ref{sec:svm-setup}) with a heuristic approach: in each iteration $k$, after solving the lower level problem to obtain $\eta^k$, we choose an index $j^*$ such that $0 < \eta_{j^*}^k < C^k$.
   
   The initial values in iP-DwCA were selected as $C_0 = 1, \sigma_0 = 0.1$. iP-DwCA was terminated when the generated iterate satisfied $\max\{t_k / f(\mathbf{x}^k, \mathbf{y}^k), \|(\mathbf{x}^k, \mathbf{y}^k) - (\mathbf{x}^{k - 1}, \mathbf{y}^{k-1})\|\} \le $ tol, with tol = 0.01. 
			We also investigated the performance of iP-DwCA with an early stop strategy (terminated after 5 iterations) and denoted it as iP-DwCA-E. 
			
			All results were averaged over 100 random trials and reported in Table \ref{table:svm}, where ``Feasibility" denotes the violation of the value function constraint for the iterates generated by iP-DwCA, i.e., $(f(\mathbf{x}^k, \mathbf{y}^k) - v_\gamma (\mathbf{x}^k, \mathbf{y}^k))/|I_{\text{tr}}|$ for iP-DwCA. The test (hold-out) error rate was calculated as follows:
			\[
			\text{Test Err. Rate} = \frac1{|I_{\text{test}}|} \sum_{i\in I_{\text{test}}} \frac12 |\mathrm{sign}(\mathbf{w}^T \bm{\varphi}_{\sigma}(\mathbf{a}_i) -  c) - b_i|,    
			\] where 
			$\mathbf{w}$ and $c$ are defined as in \eqref{def_wb}. We use "Val. Err. Infeas." and "Test. Err. Rate Infeas." to denote the validation error and test error rate calculated directly using the lower-level variable values of iterates produced by iP-DwCA and VF-iDCA, respectively; see Section \ref{sec:exp-enet} for discussion of this point.

\begin{table}[ht]
	\centering 
	\caption{Results on the RBF Kernel SVM hyperparameter selection problem}
	\label{table:svm}
    \resizebox{\textwidth}{!}{
	\begin{tabular}{c cccccc}
		\toprule 
Method     &      Time(s) & Val. Err. & Test. Err. Rate & Val. Err. Infeas. & Test. Err. Rate Infeas. &  Feasibility\\
\midrule
iP-DwCA-E &   5.8$\pm$0.3 &  0.29$\pm$0.21&  0.15$\pm$0.06&  0.22$\pm$0.07&  0.14$\pm$0.02&   0.08$\pm$0.07  \\
iP-DwCA   &  30.3$\pm$24.3&  0.21$\pm$0.08&  0.14$\pm$0.02&  0.17$\pm$0.07&  0.14$\pm$0.02&   0.00$\pm$0.01  \\
Grid      &  35.3$\pm$1.0 &  0.31$\pm$0.06&  0.14$\pm$0.02& - &  - &  - \\
Random    &  35.8$\pm$1.0 &  0.30$\pm$0.06&  0.14$\pm$0.02& - &  - &  - \\
TPE       &  36.8$\pm$1.0 &  0.30$\pm$0.06&  0.14$\pm$0.02& - &  - &  - \\
		\bottomrule 
	\end{tabular}}
\end{table}

In Table \ref{table:svm}, the early-stopping version of iP-DwCA is significantly faster than grid search, random search, and TPE, while the non-early-stopping version is of comparable computational time. We see that the early-stopping version attains comparable validation and test error to grid search, random search, and TPE, and the non-early-stopping version attains lower validation error but similar test error to  grid search, random search, and TPE.

            Finally, we plotted the decision region defined by the initial point used in iP-DwCA and the output iterate generated by iP-DwCA for one trial in Figures \ref{fig:svm0} and \ref{fig:svm20}, respectively. From these figures, we observe that our algorithm can efficiently identify appropriate hyperparameters with good classification performance, even when the initial guess is coarse.

			\begin{figure}[h]
				\centering
				\includegraphics[width=.9\textwidth]{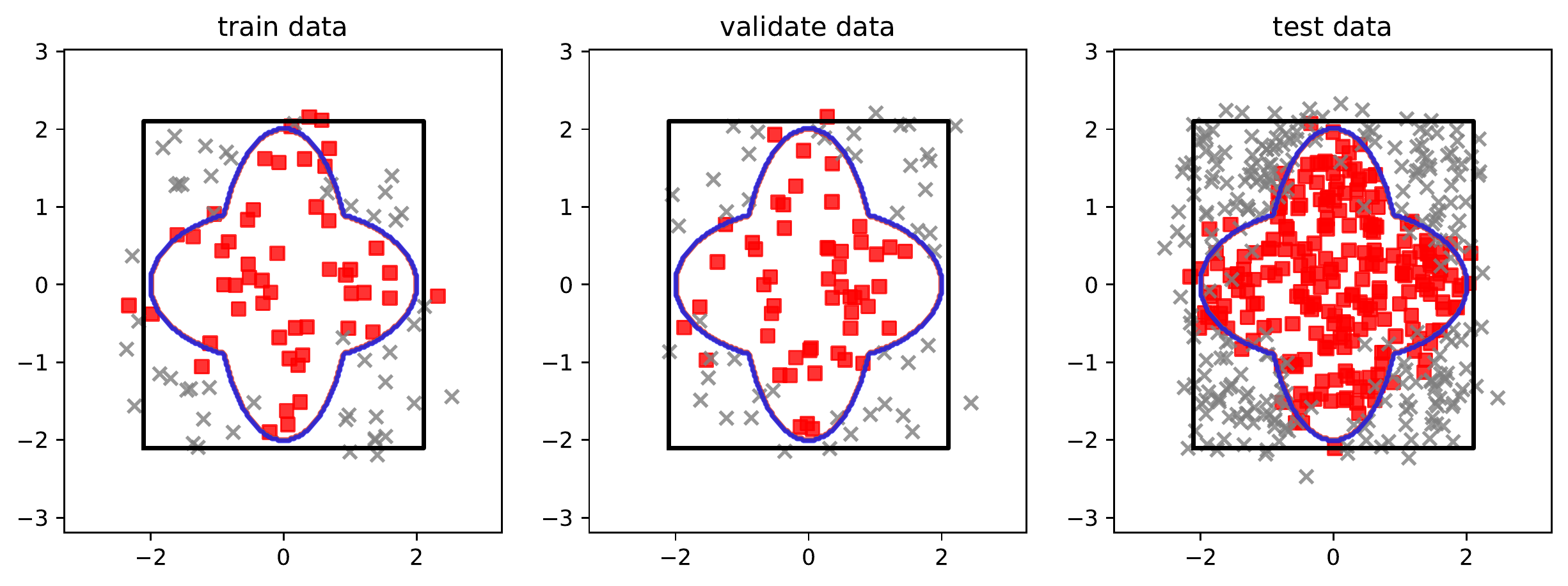}
				\caption{Generated synthetic data points}
				\label{fig:svm_ref}
			\end{figure}
			
			\begin{figure}[h]
				\centering
				\includegraphics[width=.9\textwidth]{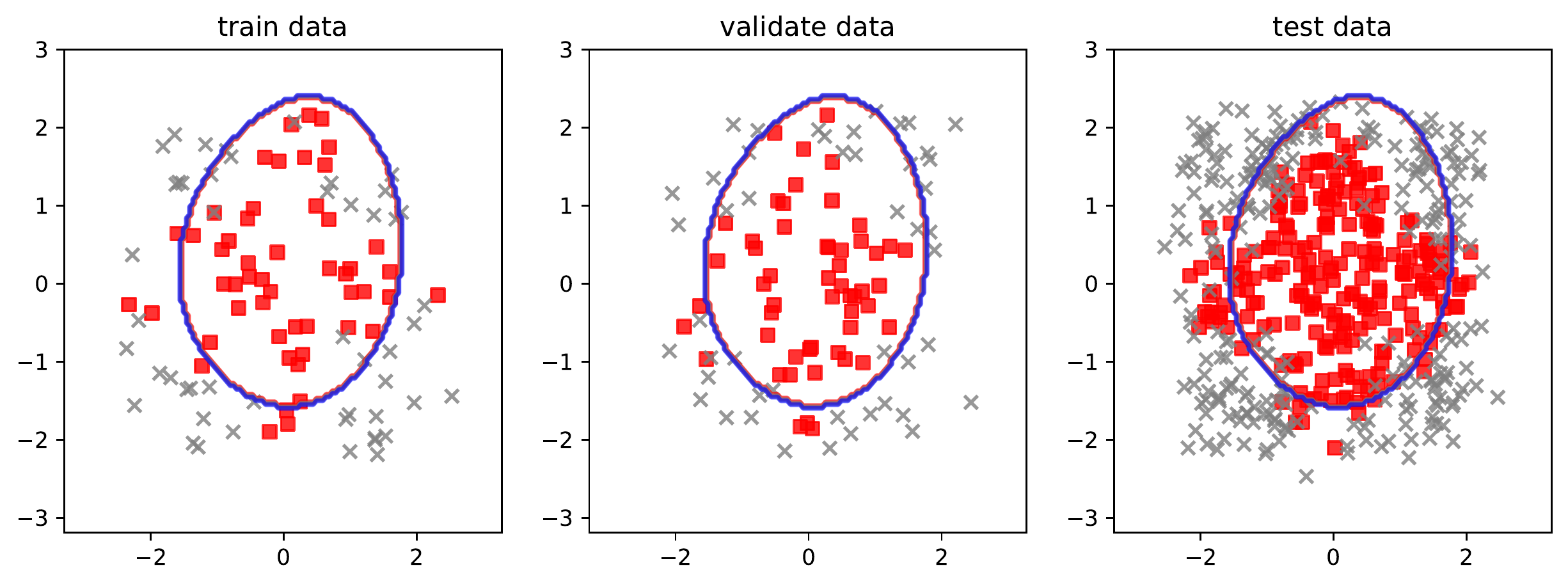}
				\caption{Decision region of initial point}
				\label{fig:svm0}
			\end{figure}
			\begin{figure}[h]
				\centering
				\includegraphics[width=.9\textwidth]{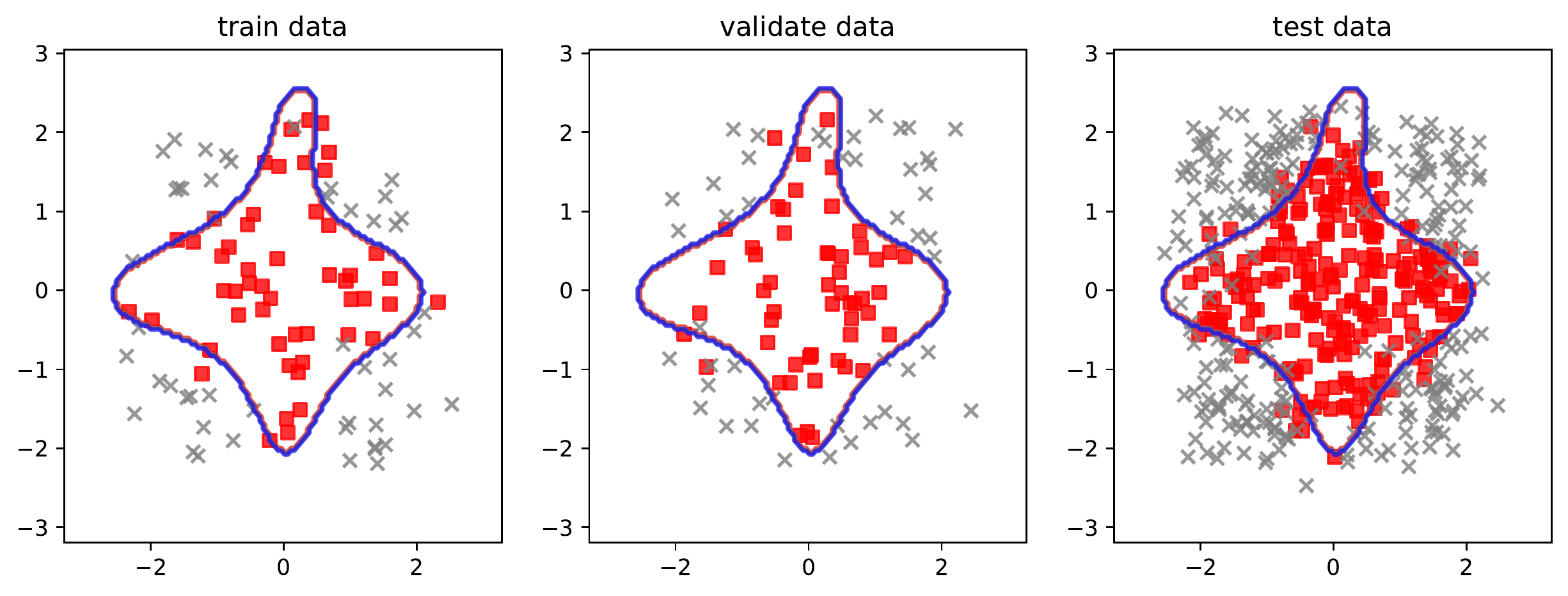}
				\caption{Decision region of output after 10 iterations.}
				\label{fig:svm20}
			\end{figure}

			\vskip 0.2in
			\bibliography{reference}

   			\newpage

			\appendix

\section{Proof of Lemma \ref{suff_decreasenew}} \label{appendix_A}

		By using the same arguments as in the proof of Lemma 1 in \cite{ye2021difference}, we can have following decrease result,
			\begin{equation}\label{suff_decrease_proof_eq1}
				\begin{aligned}
					\varphi_k(z^{k+1}) \le \varphi_k(z^{k}) + \frac{\alpha}{4}\|z^{k} - z^{k-1}\|^2 .
				\end{aligned}
			\end{equation}
			Next, since  $h_i^*$ is the conjugate function of $h_i$, $i = 0, 1$,  by definition
			\begin{equation*}
				-h_i(z^k) \le  -\langle \xi_i^{k-1}, z^k \rangle + h_i^*( \xi_i^{k-1}), \quad i = 0,1,\label{conjug}
			\end{equation*}
			and thus by taking into account the fact that $z^k\in \Sigma$, we have
			\begin{equation}\label{suff_decrease_proof_eq2}
				\begin{aligned}
					\varphi_k(z^{k}) &= g_0(z^k) - h_0(z^k) +\beta_k \max\{g_1(z^k) -h_1(z^k), 0\} \\
					& \le g_0(z^k) - \langle \xi_0^{k-1}, z^k \rangle + h_0^*( \xi_0^{k-1}) +\beta_k \max\{g_1(z^k) -\langle \xi_1^{k-1}, z^k \rangle + h_1^*( \xi_1^{k-1}) , 0\} \\
					& = E_{\beta_k}(z^k,z^{k-1},\xi^{k-1}) - \frac{\alpha}{4}\|z^k - z^{k-1}\|^2
				\end{aligned}
			\end{equation}
			Again, as $h_i^*$ is the conjugate function of $h_i$, and $\xi_i^k \in \partial h_i(z^k)$, $i = 0, 1$, there holds that
			\begin{equation*}
				-h_i(z^k) + \langle \xi_i^{k}, z^k \rangle = h_i^*(\xi_i^{k}), \quad i = 0, 1,
			\end{equation*}
			which implies
			\begin{equation*}\label{suff_decrease_proof_eq3}
				\begin{aligned}
					{\varphi}_k(z^{k+1}) = \, & g_0(z^{k+1}) - h_0(z^k) -\langle \xi_0^k, {z^{k+1}-z^k}\rangle \\
					&+\beta_k \max\{g_1(z^{k+1}) -h_1(z^k)- \langle\xi_1^k, z^{k+1}-z^k\rangle, 0\} +\frac{\alpha}{2} \|z^{k+1}-z^k\|^2 \\
					= \,&  g_0(z^{k+1}) -\langle \xi_0^k, z^{k+1}\rangle + h^*_0(\xi_0^k) \\
					&+\beta_k \max\{g_1(z^{k+1}) - \langle\xi_1^k, z^{k+1}\rangle + h^*_1(\xi_1^k), 0\} +\frac{\alpha}{2} \|z^{k+1}-z^k\|^2 \\
					= \, &E_{\beta_k}(z^{k+1},z^{k},\xi^{k}) + \frac{\alpha}{4} \|z^{k+1} - z^k\|^2.
				\end{aligned}
			\end{equation*}
			Combining with \eqref{suff_decrease_proof_eq1} and \eqref{suff_decrease_proof_eq2} gives us \eqref{suff_decrease_eq2}.
			
			It remains to prove \eqref{re_err_eq}. For convenience, we denote by $f_{1k}(z) := \max\{g_1(z) -h_1(z^k)- \langle\xi_1^k, z-z^k\rangle, 0\}$ and $\hat{f}_1(z,\xi):= \max\{g_1(z) - \langle\xi_1, z\rangle + h_1^*(\xi_1), 0\}$. Then 
			\[
			\varphi_k(z) = g_0(z) - h_0(z^k) - \langle \xi_0^k, {z-z^k}\rangle +\beta_k f_{1k}(z) +\frac{\alpha}{2} \|z-z^k\|^2,
			\]
			and 
			\[
			E_{\beta} = g_0(z) - \langle \xi_0,z \rangle + h_0^*(\xi_0) + \delta_{\Sigma}(z) + \beta \hat{f}_1(z,\xi) + \frac{\alpha}{4} \|z-z_0\|^2.
			\]
			Since  $g_1(x) $ is convex and continuous  and thus regular,  by the calculus rule for the pointwise maximum (see e.g.  Proposition 2.3.12 in \cite{clarke1990optimization}),  we have that $f_{1k}(z)$ is regular and
			\begin{equation*}
				\begin{aligned}
					\partial f_{1k}(z)
					= \,&  \left  \{ \zeta ( \partial g_1(z) - \xi_1^k) 
					~\left|~  \zeta \in [0,1], \zeta = 1, \text{if} ~  g_1(z) -h_1(z^k)- \langle\xi_1^k, z-z^k\rangle > 0, \right. \right. \\
					& \hspace*{150pt} \left . \zeta = 0, \text{if} ~  g_1(z) -h_1(z^k)- \langle\xi_1^k, z-z^k\rangle < 0  \right \}.
				\end{aligned}\label{g1}
			\end{equation*}
			Similarly,  
			$\hat{f}_1(z,\xi)$ is regular and 
			\begin{equation*}
				\begin{aligned}
					\partial \hat{f}_1(z,\xi)  
					= \,&  \left \{ \zeta ( \left\{ \partial g_1(z) - \xi_1 \right \}\times \{0\} \times \left\{-z+ \partial h_1^*(\xi_1)\right\}) 
					~ \left|~ \zeta \in [0,1], \right. \right.\\
					& \hspace{20pt }\zeta = 1, \text{if} ~  g_1(z) - \langle\xi_1, z\rangle + h_1^*(\xi_1) > 0, 
					\left . \zeta = 0, \text{if} ~  g_1(z) - \langle\xi_1, z\rangle + h_1^*(\xi_1) < 0  \right \}.
				\end{aligned}\label{g2}
			\end{equation*}
			Since $h_1^*$ is the conjugate function of $h_1$, and $\xi_1^k \in \partial h_1(z^k)$, there holds that
			\begin{equation*}
				-h_1(z^k) + \langle \xi_1^{k}, z^k \rangle = h_1^*(\xi_1^{k}).
			\end{equation*}
			Hence 
			\[
			g_1(z^{k+1}) -h_1(z^k)- \langle\xi_1^k, z^{k+1}-z^k\rangle 
			=\,  g_1(x^{k+1}) - \langle \xi_1^k, z^{k+1}\rangle + h_1^*(\xi_1^k),
			\]
			and thus
			\[
			f_{1k}(z^{k+1}) = \hat{f}_1(z^{k+1},\xi^k),
			\]
			and 
			\begin{equation}\label{subdiff_equal}
				\partial  f_{1k}(z^{k+1})  = \Pi_z \partial \hat{f}_1(z^{k+1},\xi^k),
			\end{equation}
		where $\Pi_z$ denotes the operator of projecting on the $z$ component subspace.
			Since  $g_0(z) $ and $f_{1k}(z)$ are all convex and continuous  and thus regular,   according to the the subdifferential sum rule we have
			(see e.g. Proposition 2.3.3  in \citet{clarke1990optimization}),  we have
			$$\partial \varphi_k(z) =\partial g_0(z) - \xi_0^k+ \beta_k \partial f_{1k}(z) +{\alpha} (z-z^k) .$$
			Similarly we have
			\[
			\begin{aligned}
				\partial E_{\beta} (z, z_0, \xi) = &\left \{\left\{ \partial g_0(z) - \xi_0 + \mathcal{N}_\Sigma(z)+ \beta\zeta \left(\partial g_1(z) - \xi_1\right) + \frac{\alpha}{2}\left(z - z_0\right) \right \} \times \left\{ \frac{\alpha}{2}\left(z_0 - z\right) \right\} \right.\\
				&\left. \times \{-z + \partial h_0^*(\xi_0)\} \times \left\{\beta\zeta\left(-z+ \partial h_1^*(\xi_1) \right)\right\}
				~ \left|~ \zeta \in [0,1], \right. \right.\\
				& \hspace{20pt }\zeta = 1, \text{if} ~  g_1(z) - \langle\xi_1, z\rangle + h_1^*(\xi_1) > 0, 
				\left . \zeta = 0, \text{if} ~  g_1(z) - \langle\xi_1, z\rangle + h_1^*(\xi_1) < 0  \right \}. 
			\end{aligned}
			\]
			Hence by \eqref{subdiff_equal},  we have
			\begin{equation}
				\Pi_z	\partial E_{\beta_k}(z^{k+1},z^{k},\alpha^{k}) 
				= \partial \varphi_k(z^{k+1} )+ \mathcal{N}_\Sigma(z^{k+1}) - \frac{\alpha}{2}(z^{k+1}-z^k). \label{relation}
			\end{equation}
			
			Since $z^{k+1}$ is an approximate solution to problem \eqref{subp} satisfying inexact criteria \eqref{inexact2}, there exists a vector $e_k$ such that
			\[
			e_k \in \partial {\varphi}_k(z^{k+1}) + \mathcal{N}_\Sigma(z^{k+1})
			\]
			satisfying $$\|e_k\| \le \frac{\sqrt{2}}{2} \alpha\|z^k - z^{k-1}\|.$$
			It follows from \eqref{relation} that 
			$$e_k - \frac{\alpha}{2}(z^{k+1}-z^k)  \in \Pi_z \partial E_{\beta_k}(z^{k+1},z^{k},\xi^{k}).$$ Next,  using the fact that $\xi_i^k \in \partial h_i(z^{k})$ which is equivalent to $z^k\in \partial h_i^*(\xi_i^k)$ for $i = 0, 1$, we have
			\[ 
			\begin{aligned}
				&\partial E_{\beta_k}(z^{k+1},z^{k},\xi^{k}) 
				\ni \begin{pmatrix}
					e_k - \frac{\alpha}{2}(z^{k+1}-z^k)  \\
					\frac{\alpha}{2}(z^k- z^{k+1}) \\
					z^k - z^{k+1} \\
					\beta_k\zeta_{k+1}(z^{k} - z^{k+1})
				\end{pmatrix},
			\end{aligned}
			\]
			where $\zeta_{k+1} \in [0,1]$.
			\eqref{re_err_eq} then  follows immediately.

   \section{Proof of Theorem \ref{con_KL_alg1}}
    \label{appendix_B}

			By the assumption that the adaptive penalty sequence $\{\beta_k\}$ is bounded, $\beta_k = \bar{\beta}$ for all sufficiently large $k$ and we can assume without loss of generality that 
			$\beta_k = \bar{\beta}$ for all $k$.
			Since  $f_0(z)$ is assumed to be bounded below on $\Sigma$, $E_{\bar{\beta}}(z,z_0,\xi)$ is also bounded below. Then, according to Lemma \ref{suff_decreasenew}, we have $\lim_{k \rightarrow \infty} \|z^{k+1} - z^k\|^2 = 0$.
			
			Let $\Omega$ denote the set of all limit points of sequence $\{(z^k,z^{k-1},\xi^{k-1})\}$. Then we have that $\Omega$ is a closed set and $\lim_{k \rightarrow \infty} \mathrm{dist}((z^k,z^{k-1},\xi^{k-1}), \Omega) = 0$. Then, any accumulation point of the sequence $\{z^k\}$ corresponds to those in the set $\Omega$ is a KKT point of problem (DC).
			According to the assumptions that $h_0$, $h_1$ are locally Lipschitz on set $\Sigma$ and the sequence $\{z^k\}$ is bounded, and $\xi_i^{k-1} \in \partial h_i(z^{k-1})$, $i = 0, 1$, we get the boundedness of the sequence $\{\xi^k\}$ and thus $\Omega$ is a compact set.
			
			Since the function $E_{\bar{\beta}}(z,z_0,\xi)$ is bounded below and continuous on $\Sigma$, $\lim_{k \rightarrow \infty} \|z^{k+1} - z^k\| = 0$, and by Lemma \ref{suff_decreasenew} that $E_{\bar{\beta}}(z^k,z^{k-1},\xi^{k-1})$ is decreasing as $k$ increases , we have that there exists a constant $\bar{E}$ such that for any subsequence $\{(z^l,z^{l-1},\xi^l)\}$ of sequence $\{(z^k,z^{k-1},\xi^{k-1})\}$,
			\[
			\bar{E} = \lim_{l \rightarrow \infty} E_{\bar{\beta}}(z^l,z^{l-1},\xi^l) = \lim_{k \rightarrow \infty} E_{\bar{\beta}}(z^k,z^{k-1},\xi^k),
			\]
			and thus the function $E_{\bar{\beta}}(z,z_0,\xi)$ is constant on $\Omega$.
			We can assume that $E_{\bar{\beta}}(z^k,z^{k-1},\xi^k) > \bar{E}$ for all $k$. Otherwise, if there exists $k > 0$ such that $E_{\bar{\beta}}(z^k,z^{k-1},\xi^k) = \bar{E}$, then by the above equation and Lemma \ref{suff_decreasenew}, we get $\|z^{k+1} - z^k\| = 0$ when $k$ is sufficiently large, which implies the convergence of sequence $\{z^k\}$ and the conclusion follows immediately. 
			
			Since $\lim_{k \rightarrow \infty} \mathrm{dist}((z^k,z^{k-1},\xi^{k-1}), \Omega) = 0$ and $\lim_{k \rightarrow \infty} E_{\bar{\beta}}(z^k,z^{k-1},\xi^k) = \bar{E}$, for any $\epsilon, \eta > 0$, there exists $k_0$ such that $\mathrm{dist}((z^k,z^{k-1},\xi^{k-1}), \Omega) < \epsilon$ and $\bar{E} < E_{\bar{\beta}}(z^k,z^{k-1},\zeta^k) < \bar{E} + \eta$ for $k \ge k_0$.	
			As $E_{\bar{\beta}}(z,z_0,\xi)$ satisfies the Kurdyka-\L{}ojasiewicz property at each point in $\Omega$, and $E_{\bar{\beta}}(z,z_0,\xi)$ is equal to a finite constant on $\Omega$, we can apply Lemma \ref{uniformKL} to obtain a continuous concave function $\phi$ such that for any $k \ge k_0$,
			\[
			\phi' \left(E_{\bar{\beta}}(z^k,z^{k-1},\xi^{k-1}) - \bar{E} \right) \mathrm{dist} \left( 0, \partial E_{\bar{\beta}}(z^k,z^{k-1},\xi^{k-1}) \right) 	\ge 1.
			\]
			Combining with \eqref{re_err_eq} yields
			\[
			\phi' \left (E_{\bar{\beta}}(z^k,z^{k-1},\xi^{k-1}) - \bar{E}\right ) 
			\cdot\,  \left(\frac{\sqrt{2}}{2} \alpha\|z^{k-1} - z^{k-2}\| +  (\bar{\beta}+\alpha+1) \|z^{k} - z^{k-1}\| \right)\ge 1.
			\]
			The concavity of $\phi$ and \eqref{suff_decrease_eq2} implies 
			\[
			\begin{aligned}
				&\phi'( E_{\bar{\beta}}(z^k,z^{k-1},\xi^{k-1}) - \bar{E}) \cdot \frac{\alpha}{4} \|z^{k+1} - z^k\|^2 \\
				\le \ &\phi'( E_{\bar{\beta}}(z^k,z^{k-1}, \xi^{k-1}) - \bar{E}) \cdot \left( E_{\bar{\beta}}(z^k,z^{k-1},\xi^{k-1})  - E_{\bar{\beta}}(z^{k+1},z^{k},\xi^{k}) \right) \\
				\le \ &\phi\left( E_{\bar{\beta}}(z^k,z^{k-1},\alpha^{k-1}) - \bar{E} \right) -  \phi\left( E_{\bar{\beta}}(z^{k+1},z^{k},\alpha^{k}) - \bar{E} \right).
			\end{aligned}
			\]
			Combining the above two inequalities, we obtain 
			\[
			\begin{aligned}
				&\frac{4}{\alpha}\left(\frac{\sqrt{2}}{2} \alpha\|z^{k-1} - z^{k-2}\| +  (\bar{\beta}+\alpha+1) \|z^{k} - z^{k-1}\| \right) \\
				&\cdot \left[\phi\left( E_{\bar{\beta}}(z^k,z^{k-1}, \xi^{k-1}) - \bar{E} \right) -  \phi\left( E_{\bar{\beta}}(z^{k+1},z^{k}, \xi^{k}) - \bar{E} \right) \right] 
				\ge\, \|z^{k+1} - z^k\|^2.
			\end{aligned}
			\]
			Multiplying both sides of this inequality by 4 and taking the square root, then by $2ab \le a^2 + b^2$, we have
			\[
			\begin{aligned}
				4\|z^{k+1} - z^k\| \le \,&\|z^{k} - z^{k-1}\| + \|z^{k-1} - z^{k-2}\|\\  &+ \frac{16(\bar{\beta}+\alpha+1)}{\alpha}\left[\phi\left( E_{\bar{\beta}}(z^k,z^{k-1},\xi^{k-1}) - \bar{E} \right) -  \phi\left( E_{\bar{\beta}}(z^{k+1},z^{k},\xi^{k}) - \bar{E} \right) \right].
			\end{aligned}
			\]
			Summing up the above inequality for $i = k_0 + 1, \ldots, k$, we have
			\[
			\begin{aligned}
				\sum_{i = k_0}^k 4\|z^{i+1} - z^i\| \le  &	\sum_{i = k_0}^k \left( \|z^{i} - z^{i-1}\| + \|z^{i-1} - z^{i-2}\| \right)\\
				& + \frac{16(\bar{\beta}+\alpha+1)}{\alpha} \left[\phi\left( E_{\bar{\beta}}(z^{k_0},z^{k_0-1},\xi^{k_0-1}) - \bar{E} \right) - \phi\left( E_{\bar{\beta}}(z^{k+1},z^{k},\xi^{k}) - \bar{E} \right) \right],
			\end{aligned}
			\]
			and since $\phi \ge 0$, we get
			\[
			\begin{aligned}
				\sum_{i = k_0}^k 2\|z^{i+1} - z^i\| \le  &2\|z^{k_0} - z^{k_0-1}\| + \|z^{k_0-1} - z^{k_0-2}\| + \frac{16(\bar{\beta}+\alpha+1)}{\alpha}\phi\left( E_{\bar{\beta}}(z^{k_0},z^{k_0-1},\xi^{k_0-1}) - \bar{E} \right).
			\end{aligned}
			\]
			Taking $k \rightarrow \infty$ in the above inequality shows that
			\[
			\sum_{k = 1}^\infty \|z^{k+1} - z^k\| < \infty,
			\]
			and thus the sequence $\{z^k\}$ is a Cauchy sequence. Hence the sequence $\{z^k\}$ is convergent and we get the conclusion.

		\end{document}